\crefname{thm}{Theorem}{Theorems}
\crefname{pro}{Proposition}{Propositions}
\crefname{lem}{Lemma}{Lemmas}
\crefname{rmk}{Remark}{Remarks}
\crefname{cor}{Corollary}{Corollaries}
\crefname{dfn}{Definition}{Definitions}
\crefname{ex}{Example}{Examples}
\crefname{section}{Section}{Sections}
\crefname{subsection}{Subsection}{Subsections}
\newcommand{\F}{\mathcal{F}}
\newcommand{\E}{\mathbb{E}}
\newcommand{\R}{{\mathbb R}}
\newcommand {\Dis}{\displaystyle}
\newtheorem{thm}{Theorem}[section]
\newtheorem{pro}[thm]{Proposition}
\newtheorem{rmk}[thm]{Remark}
\newtheorem{cor}[thm]{Corollary}
\newtheorem{dfn}[thm]{Definition}
\newtheorem{ex}[thm]{Example}
\journal{arXiv}
\begin{document}
\begin{frontmatter}

\title{{Weighted $L^p~(p\geq1)$ solutions of random time horizon BSDEs with stochastic monotonicity generators}\tnoteref{found}}
\tnotetext[found]{Supported by National Natural Science Foundation of China (no. 12171471).
\vspace{0.2cm}}

\author{{Xinying Li$^{*}$}\qquad  Shengjun Fan$^{**}$ \vspace{0.3cm} \\\textit{School of Mathematics, China University of Mining and Technology, Xuzhou 221116, PR China}}

\cortext[cor1]{E-mail address: lixinyingcumt@163.com}

\cortext[cor2]{Corresponding author. E-mail address: shengjunfan@cumt.edu.cn}

\vspace{0.2cm}
\begin{abstract}
In this paper, we are concerned with a multidimensional backward stochastic differential equation (BSDE) with a general random terminal time $\tau$, which may take values in $[0,+\infty]$. Firstly, we establish an existence and uniqueness result for a weighted $L^p~(p>1)$ solution of the preceding BSDE with generator $g$ satisfying a stochastic monotonicity condition with general growth in the first unknown variable $y$ and a stochastic Lipschitz continuity condition in the second unknown variable $z$. Then, we derive an existence and uniqueness result for a  weighted $L^1$ solution of the preceding BSDE under an additional stochastic sub-linear growth condition in $z$. These results generalize the corresponding ones obtained in \cite{Li2024} to the $L^p~(p\geq 1)$ solution case. Finally, the corresponding comparison theorems for the weighted $L^p~(p\geq1)$ solutions are also put forward and verified in the one-dimensional setting. In particular, we develop new ideas and systematical techniques in order to establish the above results.
\vspace{0.2cm}
\end{abstract}

\begin{keyword}
Backward stochastic differential equation \sep Existence and uniqueness \sep\\  \hspace*{1.95cm} Stochastic monotonicity condition \sep Stochastic sub-linear growth \sep\\  \hspace*{1.95cm} Comparison theorem

\MSC[2021] 60H10\vspace{0.2cm}
\end{keyword}

\end{frontmatter}
\vspace{-0.4cm}

\section{Introduction}
Let $k$ and $d$ be two positive integers, $(B_t)_{t\geq0}$ a standard $d$-dimensional Brownian motion defined on a complete probability space $(\Omega,\F,\mathbb{P})$ generating an augmented $\sigma$-algebra filtration $(\F_t)_{t\geq0}$, $\tau$ a general $(\F_t)$-stopping time taking values in $[0,+\infty]$ called the random terminal time (or the random time horizon) and $\F:=\F_\tau$. In this paper, we consider the backward stochastic differential equation (BSDE) of the following form:
\begin{align}\label{BSDE1.1}
  Y_t=\xi+\int_t^\tau g(s,Y_s,Z_s){\rm d}s-\int_t^\tau Z_s{\rm d}B_s, \ \ t\in[0,\tau],
\end{align}
where $\xi$ is an $\F_\tau$-measurable $k$-dimensional random vector called the terminal value, and the random function
$$
g(\omega,t,y,z): \Omega\times[0,\tau]\times \R^k\times \R^{k\times d}\mapsto \R^k
$$
is $(\F_t)$-progressively measurable for each $(y,z)$ called the generator of BSDE \eqref{BSDE1.1}. We usually denote BSDE with parameters $(\xi,\tau,g)$ as BSDE $(\xi,\tau,g)$. As indicated in \cite{Li2024}, it is more natural to study the type of BSDE \eqref{BSDE1.1} than the following type: for each $T>0$,
\begin{equation}\label{BSDE1.1*}
\left\{
\begin{array}{l}
\Dis y_t=y_{T\wedge\tau}+\int_{t\wedge\tau}^{T\wedge\tau} g(s,y_s,z_s){\rm d}s-\int_{t\wedge\tau}^{T\wedge\tau} z_s{\rm d}B_s, \ \ t\in [0,T];\vspace{0.2cm}\\
\Dis y_t=\xi\ \ {\rm on\  the\ set\ of}\ \{t\geq \tau\},
\end{array}
\right.
\end{equation}
and it also seems to be more convenient in order to unify those works on BSDEs with the bounded terminal time, unbounded terminal time and infinite terminal time.

It is well known that \cite{PardouxPeng1990SCL} initially proved an existence and uniqueness result for an $L^2$ solution of BSDE \eqref{BSDE1.1} with the constant terminal time where the generator $g$ is uniformly Lipschitz continuous in $(y,z)$. Since then, BSDEs have undergone extensive research, and revealed numerous applications in mathematical finance, stochastic control, partial differential equations, and other related fields. Interested readers are referred to \cite{KarouiEl1997}, \cite{ZChenBWang2000JAMS}, \cite{Yong2006}, \cite{DelbaenTang2010}, \cite{Bahlali2015}, \cite{JiRonglinShiXueJun2019}, \cite{Tian2023SIAM} and \cite{Li2024} for more details. However, in order to align with more practical applications, many of scholars are devoted to relaxing the uniform Lipschitz continuity condition on the generator and improving the constant time horizon to the infinite time horizon case and further generalizing it to the random time horizon case. Meanwhile, the existence and uniqueness, and the comparison theorems for $L^p~(p\geq1)$ solutions have also been extensively studied.

In particular, the uniform Lipschitz continuity condition of the generator $g$ in $y$ used in \cite{PardouxPeng1990SCL} was successfully relaxed to the monotonicity condition together with a general growth condition in \cite{Pardoux1999}. By using the convolution approaching idea, the a priori estimate technique and the truncation argument, \cite{Briand2003SPA} further weakened this general growth condition and generalized the result in \cite{Pardoux1999} to the $L^p~(p\geq1)$ solution case, where the generator $g$ has a kind of sub-linear growth in $z$ in the case of $p=1$. These results were extended to the case of infinite terminal time by a better truncation argument in \cite{Xiao2015} where the generator $g$ satisfies a time-varying monotonicity condition in $y$, i.e., there exists a deterministic integrable function $\mu(\cdot)$ such that for each $y_1, \ y_2\in\R^k$ and $z\in\R^{k\times d}$,
$$\left\langle y_1-y_2,g(\omega,t,y_1,z)-g(\omega,t,y_2,z)\right\rangle\leq \mu(t)|y_1-y_2|^2, \ \ t\in[0,\tau],$$
and a time-varying Lipschitz continuity condition in $z$, i.e., there exists a deterministic square-integrable function $\nu(\cdot)$ such that
for each $y\in\R^k$ and $z_1,z_2\in\R^{k\times d}$,
$$\left|g(\omega,t,y,z_1)-g(\omega,t,y,z_2)\right|\leq \nu(t)|z_1-z_2|, \ \ t\in[0,\tau],$$
as well as an additional time-varying sub-linear growth condition in $z$ for the case of $p=1$. We also would like to mention that the $L^p~(p>1)$ solution for the type of BSDE \eqref{BSDE1.1*} was studied by \cite{PardouxandRascanu(2014)} and \cite{O2020}.
Very recently, under the conditions of stochastic monotonicity in $y$ and stochastic Lipschitz continuity in $z$ for the generator $g$, that is, the aforementioned two conditions hold with two nonnegative processes $\mu_\cdot$ and $\nu_\cdot$ satisfying the condition of $\int_0^{\tau(\omega)} (\mu_t(\omega)+\nu_t^2(\omega)){\rm d}t<+\infty$ rather than the deterministic functions $\mu(\cdot)$ and $\nu(\cdot)$, \cite{Li2024} used a delicate truncation argument to establish a general existence and uniqueness result for a weighted $L^2$ solution of BSDE \eqref{BSDE1.1} with a general random terminal time $\tau$, lying in a weighted $L^2$ space with a weighted factor $e^{\int_0^t (\beta \mu_s(\omega)+\frac{\rho}{2}\nu_s^2(\omega)){\rm d}s}$ for any given $\beta\geq 1$ and $\rho>1$, which unifies and strengthens many corresponding existing results on the $L^2$ solution. Then, the following question naturally arises: can the result of \cite{Li2024} be generalized to the $L^p~(p\geq1)$ solution case that includes all above-mentioned results on the $L^p$ solution as its special cases? If the answer is affirmative, what are the weighted factors for the case of $p>1$ and $p=1$, and what additional conditions are required in the case of $p=1$? These questions are answered in this paper.

Let us proceed with presenting some existing results that are closely related to our work. By subdividing the time interval via stopping times and using Picard's iteration method, \cite{LiT2019} investigated existence and uniqueness of the $L^2$ solution for infinite time horizon BSDE \eqref{BSDE1.1} under a weak stochastic-monotonicity condition of the generator $g$ in $y$ together with the stochastic Lipschitz condition in $z$, which was extended to the $L^p~(p>1)$ solution case in \cite{LiFan2023CSTM} by an ingenious truncation argument. Readers are also referred to \cite{Liu2020} for the case where the generator $g$ satisfies a stochastic Lipschitz condition in $(y,z)$ with stochastic coefficients $\mu_\cdot$ and $\nu_\cdot$. It should be noted that the stochastic coefficients $\mu_\cdot$ and $\nu_\cdot$ in all above-mentioned literature of this paragraph are required to satisfy $\int_0^{\tau(\omega)} (\mu_t(\omega)+\nu^2_t(\omega)){\rm d}t\leq M$ for a constant $M>0$. Furthermore, BSDE \eqref{BSDE1.1} with constant terminal time $T$ and stochastic coefficients $\mu_\cdot$ and $\nu_\cdot$ with $\int_0^{T}(\mu_t(\omega)+\nu_t^2(\omega)){\rm d}t$ having a certain exponential moment was addressed by \cite{Yong2006} and \cite{Bahlali2015}. On the other hand, \cite{BenderKohlmann2000} and \cite{Li2023} studied existence and uniqueness for the $L^2$ solutions of random time horizon BSDE with stochastic Lipschitz continuity generators, and the $L^p~(p>1)$ solution case was tackled in \cite{WangRanChen2007} by the truncation technique and the convolution approaching method and in \cite{O2020} by means of the random time-change idea. We would like to mention that the terminal time $\tau$ needs to be bounded and the stochastic coefficients $\mu_\cdot$ and $\nu_\cdot$ are imposed the condition of $\int_0^{\tau(\omega)} (\mu_t(\omega)+\nu_t^2(\omega)){\rm d}t\leq M$ for a constant $M>0$ when $p\in(1,2)$ in \cite{WangRanChen2007}, and an additional restrictive condition for $\mu_\cdot$ and $\nu_\cdot$ is required in \cite{O2020}. Readers are also referred to \cite{Owo(2017)} for the study on reflected BSDEs with stochastic Lipschitz continuity generators. Finally, some existence and uniqueness results on the $L^1$ solution of BSDE \eqref{BSDE1.1} with finite terminal time or infinite terminal time were established in \cite{FanLiuSPL} and \cite{SFan2016SPA}. However, to the best of our knowledge, there are no papers to study the $L^1$ solution of BSDE \eqref{BSDE1.1} with a general random terminal time $\tau$ and unbounded stochastic coefficients $\mu_\cdot$ and $\nu_\cdot$.

Enlightened by these results mentioned above, we first establish a general existence and uniqueness result for the weighted $L^p~(p>1)$ solution of BSDE \eqref{BSDE1.1} with a general random time horizon $\tau$, where the generator $g$ satisfies the stochastic monotonicity condition together with a general growth condition in $y$ and the stochastic Lipschitz continuity condition in $z$ with unbounded stochastic coefficients $\mu_\cdot$ and $\nu_\cdot$ (see \cref{thm:3.4} for more details). It should be emphasized that we don't impose any restriction of finite moment on the stochastic coefficients $\mu_\cdot$ and $\nu_\cdot$. After that, we study existence and uniqueness for the weighted $L^1$ solution with an additional assumption that the generator has a stochastic sub-linear growth in $z$ with stochastic coefficient $\gamma_\cdot$ (see \cref{thm:4.1} for more details). We employ $e^{\int_0^t\left(\beta\mu_s+\frac{\rho}{2[(p-1)\wedge1]} \nu_s^2\right){\rm d}s}$ for any given $\beta\geq1$ and $\rho>1$ as the weighted factor when $p>1$, and suppose an additional condition of $\int_0^{\tau(\omega)}\nu_t^2{\rm d}t\leq M$ for a constant $M>0$ and use $e^{\int_0^t\beta\mu_s{\rm d}s}$ as the weighted factor when $p=1$, which are both different from those used in \cite{WangRanChen2007}, \cite{Owo(2017)}, \cite{O2020} and \cite{Li2024}. Analogous to results in \cite{Li2024}, our results unify and strengthen some corresponding results on the $L^p~(p\geq1)$ solution of BSDE \eqref{BSDE1.1} with stochastic Lipschitz/monotonicity generators and BSDE \eqref{BSDE1.1} with deterministic Lipschitz/monotonicity generators. In particular, the integrability conditions for $g(\cdot,0,0)$ used in \cite{BenderKohlmann2000}, \cite{Bahlali2004}, \cite{WangRanChen2007} and \cite{O2020} are all stronger than our \ref{A:H3} (see \cref{rmk:h1} for more details), and the additional integrability conditions for stochastic coefficients $\mu_\cdot$ and $\nu_\cdot$ in \cite{WangRanChen2007} and \cite{O2020} are not required any more. Due to the more general integrability on the stochastic coefficients and the more general weighted spaces with different weighted factors, we systematically employ the methods utilized in \cite{Briand2003SPA}, \cite{Xiao2015}, \cite{LiT2019} and \cite{Li2024} and develop some innovative ideas to address the new challenges that arise naturally. For a comprehensive understanding, we introduce some remarks and examples in Sections 3 and 4 to compare our results with some closely related works and illustrate the innovation of our work. Finally, in the one-dimensional setting, the corresponding comparison theorems for the weighted $L^p~(p>1)$ solutions and the weighted $L^1$ solutions are put forward and verified in Theorems \ref{thm:com1} and \ref{thm:com2}. We overcome some new difficulties caused by the unbounded stochastic coefficients, see \cref{rmk5.3} for more details.

The remainder of this paper is structured as follows. Section 2 presents two a priori estimates-Propositions \ref{pro:1.1} and \ref{pro:1.2}, which are used repeatedly in the subsequent sections. In Section 3, we state \cref{thm:3.4} on the weighted $L^p~(p>1)$ solution for BSDE \eqref{BSDE1.1}, provide its proof and present some examples and remarks. In Section 4, the existence and uniqueness for the $L^1$ solution of BSDE \eqref{BSDE1.1} is established in \cref{thm:4.1} and some examples and remarks are also given. Comparison theorems for the weighted $L^p~(p\geq1)$ solutions of one-dimensional BSDEs are finally addressed in Section 5.

\section{Preliminaries and a prior estimates}
\setcounter{equation}{0}
In this section, we introduce some notations to be used later. In this paper, all equalities and inequalities between random elements are understood to hold $\mathbb{P}$-a.s.. Let $|\cdot|$ denote the usual Euclidean norm and $\langle x,y\rangle$ the usual Euclidean inner product of $x,y\in \R^k$. Denote $\R_+:=[0,+\infty)$, $a\wedge b:=\min\{a,b\}$, ${\bf 1}_A=1$ when $x\in A$ otherwise 0 and $\hat{y}:=\frac{y}{|y|}{\bf 1}_{|y|\neq0}$ for each $y\in \R^k$.

For any $(\F_t)$-progressively measurable nonnegative process $(a_t)_{t\in[0,\tau]}$, we introduce the following weighted $L^p~(p>0)$ spaces, which are just the spaces used in \cite{Li2024} in the case of $p=2$.

$\bullet$  $L_\tau^p(a_\cdot;\R^k)$ denotes the set of all $\xi$ that are $\F_\tau$-measurable $\R^k$-valued random vectors with
$$\|\xi\|_{p;a_\cdot}:=\left(\E\left[e^{p \int_0^\tau a_s{\rm d}s}|\xi|^p\right]\right)^{1\wedge{\frac{1}{p}}}<+\infty.$$

$\bullet$  $S_\tau^p(a_\cdot;\R^k)$ denotes  the set of all $(Y_t)_{t\in[0,\tau]}$ that are  $(\F_t)$-adapted, $\R^k$-valued and continuous processes  with
$$\|Y_\cdot\|_{p;a_\cdot,c}:=\left\{\E\left[\sup_{t\in[0,\tau]}\left(e^{p \int_0^t a_r{\rm d}r}|Y_t|^p\right)\right]\right\}^{1\wedge{\frac{1}{p}}}<+\infty.$$

$\bullet$  $M_\tau^p(a_\cdot;\R^{k\times d})$ denotes the set of all $(Z_t)_{t\in[0,\tau]}$ that are  $(\F_t)$-progressively measurable $\R^{k\times d}$-valued processes with
$$\|Z_\cdot\|_{p;a_\cdot}:=\left\{\E\left[\left(\int_0^\tau e^{2 \int_0^s a_r{\rm d}r}|Z_s|^2{\rm d}s\right)^{\frac{p}{2}}\right]\right\}^{1\wedge{\frac{1}{p}}}<+\infty.$$
Furthermore, it is clear that when $p\geq1$,
$$H_\tau^p(a_\cdot;\R^{k}\times\R^{k\times d}):=S_\tau^p(a_\cdot;\R^k)\times M_\tau^p(a_\cdot;\R^{k\times d})$$
is a Banach space with the norm
$$\|(Y_\cdot,Z_\cdot)\|_{p;a_\cdot}:=\|Y_\cdot\|_{p;a_\cdot,c}
+\|Z_\cdot\|_{p;a_\cdot}.$$
Let us recall that an $(\F_t)$-adapted, $\R^k$-valued and continuous process $(Y_t)_{t\in[0,\tau]}$ belongs to the class (D) if the family of variables $\{|Y_{\tau'}|:\tau'\in\Sigma_\tau\}$ is uniformly integrable, here and hereafter $\Sigma_\tau$ represents the set of all $(\F_t)$-measurable stopping times valued in $[0,\tau]$. For a process $(Y_t)_{t\in[0,\tau]}$ belonging to the class (D), we define
$$||Y||_1:=\sup\{\E[|Y_{\tau'}|]:\tau'\in\Sigma_\tau\}.$$

For convenience, we introduce the following definitions concerning solutions of BSDE (1.1).

\begin{dfn}
A solution of BSDE \eqref{BSDE1.1} is a pair of $(\F_t)$-progressively measurable processes $(y_t,z_t)_{t\in[0,\tau]}$ with values in $\R^k\times \R^{k\times d}$, such that $\mathbb{P}-a.s.$, $\int_0^\tau |g(t,y_t,z_t)|{\rm d}t<+\infty$, $\int_0^\tau |z_t|^2{\rm d}t<+\infty$, and \eqref{BSDE1.1} holds for each $t\in[0,\tau]$.
\end{dfn}

\begin{dfn}
Assume that $(y_t,z_t)_{t\in[0,\tau]}$ is a solution to BSDE \eqref{BSDE1.1} and $a_\cdot$ is an $(\F_t)$-progressively measurable nonnegative process. If $(y_t,z_t)_{t\in[0,\tau]}\in S_\tau^p(a_\cdot;\R^k)\times M_\tau^p(a_\cdot;\R^{k\times d})$ for some $p>1$, then $(y_t,z_t)_{t\in[0,\tau]}$ is called a weighted $L^p$ solution of BSDE \eqref{BSDE1.1} in $H_\tau^p(a_\cdot;\R^{k}\times\R^{k\times d})$. If $(y_t,z_t)_{t\in[0,\tau]}\in\bigcap_{\theta\in(0,1)}\left[S_\tau^\theta(a_\cdot;\R^k)\times M_\tau^\theta(a_\cdot;\R^{k\times d})\right]$ and $(e^{\int_0^ta_s{\rm d}s}y_t)_{t\in[0,\tau]}$ belongs to the class (D), then $(y_t,z_t)_{t\in[0,\tau]}$ is called a weighted $L^1$ solution of BSDE \eqref{BSDE1.1} in $\bigcap_{\theta\in(0,1)}\left[S_\tau^\theta(a_\cdot;\R^k)\times M_\tau^\theta(a_\cdot;\R^{k\times d})\right]$.
\end{dfn}

The following Propositions \ref{pro:1.1} and \ref{pro:1.2} are two important a priori estimates concerning the weighted $L^p~(p\geq1)$ solution of BSDE \eqref{BSDE1.1}, which will be frequently used later. In stating them, the following assumption on the generator $g$ will be used.

\begin{enumerate}
\renewcommand{\theenumi}{(A)}
\renewcommand{\labelenumi}{\theenumi}
\item\label{A:A}
For each $(y,z)\in\R^k\times\R^{k\times{d}}$, it holds that
$$\left<\hat{y},g(\omega,t,y,z)\right>\leq u_{t}(\omega)|y|+v_t(\omega)|z|+f_{t}(\omega), \ \ t\in[0,\tau],$$
where $u_\cdot$ and $v_\cdot$ are two nonnegative $(\F_t)$-progressively measurable processes such that $$\int_{0}^{\tau}\overline{a}_t{\rm d}t<+\infty \ \ {\rm with} \ \ \overline{a}_t:=\beta u_t+\frac{\rho}{2[(p-1)\wedge1]} v_t^2{\bf 1}_{p>1}, \ \ \beta\geq1 \ \ {\rm and} \ \ \rho>1,$$
and $f_\cdot$ is an $(\F_t)$-progressively measurable nonnegative process such that $$\E\left[\left(\int_{0}^{\tau}e^{ \int_{0}^{t}\overline{a}_r{\rm d}r}f_t{\rm d}t\right)^p\right]<+\infty.$$
\end{enumerate}

\begin{pro}\label{pro:1.1}
Assume that $p>0$, the generator $g$ satisfies \ref{A:A}, the terminal value $\xi\in L_\tau^p(\overline{a}_\cdot;\R^k)$, and $(Y_t,Z_t)_{t\in[0,\tau]}$ is a solution of BSDE \eqref{BSDE1.1} such that $Y_\cdot\in S_\tau^p(\overline{a}_\cdot;\R^k)$. If $p>1$, then $Z_\cdot\in M_\tau^p(\overline{a}_\cdot;\R^{k\times d})$ and there exists a constant $C_{p,\rho}$ depending only on $p$ and $\rho$ such that for each $0\leq r\leq t<+\infty$,
\begin{align}\label{pro:1.1-1}
\begin{split}
&\E\left[\left(\int_{t\wedge\tau}^{\tau}e^{2\int_{0}^{s}\overline{a}_r{\rm d}r}|Z_s|^2{\rm d}s\right)^{\frac{p}{2}}\bigg|\F_{r\wedge\tau}\right]\\
&\ \ \leq
C_{p,\rho}\left(\E\left[\sup_{s\in[t\wedge\tau,\tau]}\left(e^{p{\int_{0}^{s}\overline{a}_r{\rm d}r}}|Y_s|^p\right)\bigg|\F_{r\wedge\tau}\right]+ \E\left[\left(\int_{t\wedge\tau}^{\tau}e^{{\int_{0}^{s}\overline{a}_r{\rm d}r}}f_s{\rm d}s\right)^p\bigg|\F_{r\wedge\tau}\right]\right).
\end{split}
\end{align}
If $p\in(0,1]$ and $\int_{0}^{\tau}v_t^2{\rm d}t\leq M$ for a constant $M>0$, then $Z_\cdot\in M_\tau^p(\beta u_\cdot;\R^{k\times d})$ and there exists a constant $C_{p,\rho,M}$ depending only on $p$, $\rho$ and $M$ such that for each $0\leq r\leq t<+\infty$,
\begin{align}\label{pro:1.1-2}
\begin{split}
&\E\left[\left(\int_{t\wedge\tau}^{\tau}e^{2\beta\int_{0}^{s}u_r{\rm d}r}|Z_s|^2{\rm d}s\right)^{\frac{p}{2}}\bigg|\F_{r\wedge\tau}\right]\\
&\ \ \leq
C_{p,\rho,M}\left(\E\left[\sup_{s\in[t\wedge\tau,\tau]}\left(e^{p\beta{\int_{0}^{s}u_r{\rm d}r}}|Y_s|^p\right)\bigg|\F_{r\wedge\tau}\right]+ \E\left[\left(\int_{t\wedge\tau}^{\tau}e^{\beta{\int_{0}^{s}u_r{\rm d}r}}f_s{\rm d}s\right)^p\bigg|\F_{r\wedge\tau}\right]\right).
\end{split}
\end{align}
\end{pro}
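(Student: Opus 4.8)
The plan is to prove both estimates via the standard Itô--Tanaka approach applied to the process $e^{\int_0^s \overline a_r{\rm d}r}|Y_s|$ (respectively $e^{\beta\int_0^s u_r{\rm d}r}|Y_s|$), followed by a Burkholder--Davis--Gundy argument. I would first fix $0\le r\le t<+\infty$ and localize by a sequence of stopping times $(\sigma_n)$ reducing the local martingale part, so that all computations below are first carried out on $[t\wedge\tau\wedge\sigma_n,\tau\wedge\sigma_n]$ and then passed to the limit by Fatou on the left and monotone/dominated convergence on the right; since $Y_\cdot\in S_\tau^p(\overline a_\cdot;\R^k)$ and $\xi\in L_\tau^p(\overline a_\cdot;\R^k)$ are assumed, these limits are legitimate.

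For the case $p>1$, the key step is to apply Itô's formula to $e^{p\int_0^s\overline a_r{\rm d}r}|Y_s|^p$ on the interval $[t\wedge\tau,\tau]$ (using a smooth approximation of $x\mapsto|x|^p$, which is $C^2$ for $p\ge2$ and handled by the usual regularization/Meyer--It\^o argument for $1<p<2$). The drift terms are: the dissipative term $-p\,e^{p\int\overline a}|Y_s|^{p-1}\langle\hat Y_s,g(s,Y_s,Z_s)\rangle\,{\rm d}s$, the weight-derivative term $+p\,\overline a_s\,e^{p\int\overline a}|Y_s|^p\,{\rm d}s$, and the second-order term producing $-c_p\,e^{p\int\overline a}|Y_s|^{p-2}\mathbf 1_{Y_s\ne0}|Z_s|^2\,{\rm d}s$ with $c_p=p(p-1)/2>0$. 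Invoking \ref{A:A} to bound $\langle\hat Y_s,g\rangle\le u_s|Y_s|+v_s|Z_s|+f_s$, I would use Young's inequality on the cross term $p\,e^{p\int\overline a}|Y_s|^{p-1}v_s|Z_s|$ to split it as $\le \tfrac{c_p}{2}e^{p\int\overline a}|Y_s|^{p-2}\mathbf 1_{Y_s\ne0}|Z_s|^2 + C\,e^{p\int\overline a}|Y_s|^p v_s^2$ (plus a harmless term handling $\{Y_s=0\}$ via $|Y_s|^{p-1}|Z_s|$ directly). The crucial cancellation is that $p\overline a_s = p\beta u_s + \tfrac{p\rho}{2(p-1)}v_s^2$ dominates both $p\,u_s$ (from the $u$-term of \ref{A:A}) and the $v_s^2$-coefficient produced by Young's inequality once $\rho>1$; the precise value of the constant in Young's inequality is what forces $\rho/(p-1)$ in $\overline a$. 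After cancellation, one obtains, for the integrated form on $[t\wedge\tau,\tau]$,
$$
\tfrac{c_p}{2}\int_{t\wedge\tau}^{\tau}e^{p\int_0^s\overline a}|Y_s|^{p-2}\mathbf 1_{Y_s\ne0}|Z_s|^2{\rm d}s \le e^{p\int_0^\tau\overline a}|\xi|^p + C\int_{t\wedge\tau}^{\tau}e^{p\int_0^s\overline a}|Y_s|^{p-1}f_s{\rm d}s - \int_{t\wedge\tau}^{\tau}p\,e^{p\int_0^s\overline a}|Y_s|^{p-1}\langle\hat Y_s,Z_s{\rm d}B_s\rangle.
$$
Then I would pass to $M_\tau^p$: writing $\int e^{2\int\overline a}|Z_s|^2 = \int (e^{\int\overline a}|Y_s|)^{2-p}\cdot e^{p\int\overline a}|Y_s|^{p-2}|Z_s|^2$ (on $\{Y_s\ne0\}$), applying H\"older with exponents $\tfrac{2}{2-p},\tfrac2p$ after raising to power $p/2$, and absorbing $\sup_s(e^{p\int\overline a}|Y_s|^p)^{(2-p)/2}$ via Young, gives the bound by $\sup_s e^{p\int\overline a}|Y_s|^p$ plus the $Z$-integral, which after rearrangement yields the $M_\tau^p$ membership and the estimate; conditioning on $\F_{r\wedge\tau}$ throughout (the stochastic integral is a martingale after localization, so its conditional expectation vanishes) and controlling the $f$-term by Young's inequality $|Y_s|^{p-1}f_s \le \epsilon\sup|Y_s|^p + C_\epsilon(\int e^{\int\overline a}f)^p$-type splitting produces exactly \eqref{pro:1.1-1}. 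The main obstacle here is the bookkeeping of constants so that only $p$ and $\rho$ enter $C_{p,\rho}$, and ensuring the BDG/H\"older absorption is valid given only $Y_\cdot\in S_\tau^p$ a priori (this is why one keeps the $\sup$ term on the right rather than absorbing it).

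For the case $p\in(0,1]$, the same Itô--Tanaka computation is instead performed on $e^{\beta\int_0^s u_r{\rm d}r}|Y_s|$ — i.e. applying It\^o's formula to $|Y_s|$ itself (Tanaka, using the local time at $0$ which has the favorable sign) and to the exponential weight; here there is no helpful $|Y_s|^{p-2}|Z_s|^2$ term, so the $v_s|Z_s|$ contribution from \ref{A:A} must be controlled differently. Using the hypothesis $\int_0^\tau v_t^2{\rm d}t\le M$, one writes $v_s|Z_s|\le \tfrac12 v_s^2\phi_s + \tfrac1{2\phi_s}|Z_s|^2$ or, more efficiently, performs a deterministic Girsanov-type change of the weight; but the cleanest route is: on $[t\wedge\tau,\tau]$ the Tanaka formula gives $e^{\beta\int_0^s u}|Y_s| + \int_{t\wedge\tau}^{\tau}(\text{local time}) = e^{\beta\int_0^\tau u}|\xi| + \int(\beta u_s|Y_s| - \langle\hat Y_s,g\rangle)e^{\beta\int_0^s u}{\rm d}s - \int e^{\beta\int_0^s u}\langle\hat Y_s,Z_s{\rm d}B_s\rangle$, and $\beta u_s|Y_s|-\langle\hat Y_s,g\rangle\le (\beta-1)u_s|Y_s| + v_s|Z_s| + f_s$. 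To recover a $Z$-estimate I would then apply It\^o to $e^{2\beta\int_0^s u}|Y_s|^2$ on the same interval, producing $\int e^{2\beta\int u}|Z_s|^2{\rm d}s$ with a genuine coefficient, bound the $\langle Y_s,g\rangle$ cross-term using $2v_s|Y_s||Z_s|\le 2v_s^2|Y_s|^2 + \tfrac12|Z_s|^2$ (absorbing the $\tfrac12|Z_s|^2$), raise to the power $p/2<1$, and crucially use $e^{2\int_{t\wedge\tau}^{\cdot}v_s^2{\rm d}s}\le e^{2M}$ (a bounded factor, which is where $M$ enters $C_{p,\rho,M}$) together with the elementary inequality $(x+y)^{p/2}\le x^{p/2}+y^{p/2}$ and H\"older to pass from the $L^1$-type bound to the conditional-expectation estimate \eqref{pro:1.1-2}. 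The subadditivity of $x\mapsto x^{p/2}$ replaces the absorption trick used for $p>1$. I expect the delicate point in this case to be managing the local-time/Tanaka term and the $\sup$ term simultaneously while only $\int v^2\le M$ (not $\int u$) is bounded, so the weight is $e^{\beta\int u}$ and the $v$-contribution is converted entirely into the constant $e^{2M}$.
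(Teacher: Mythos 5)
Your treatment of the case $p\in(0,1]$ is, after the self-correction from the Tanaka formula to It\^o's formula applied to $e^{2\beta\int_0^s u_r{\rm d}r}|Y_s|^2$, essentially the paper's proof: the cross term $2v_s|Y_s||Z_s|$ is split by Young's inequality, the resulting $v_s^2|Y_s|^2$-integral is controlled by a constant times $\sup_s\big(e^{2\beta\int_0^s u_r{\rm d}r}|Y_s|^2\big)$ using $\int_0^\tau v_t^2{\rm d}t\le M$, one raises the resulting pathwise inequality to the power $p/2$, and BDG plus Young absorbs the martingale term after localization. That part is fine.

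For $p>1$, however, there is a genuine gap. You derive the $Z$-estimate from It\^o's formula applied to $e^{p\int_0^s\overline{a}_r{\rm d}r}|Y_s|^p$, which controls $\int e^{p\int\overline{a}}|Y_s|^{p-2}{\bf 1}_{|Y_s|\neq 0}|Z_s|^2{\rm d}s$, and you then recover $\big(\int e^{2\int\overline{a}}|Z_s|^2{\rm d}s\big)^{p/2}$ via the factorization $e^{2\int\overline{a}}|Z_s|^2=\big(e^{\int\overline{a}}|Y_s|\big)^{2-p}\cdot e^{p\int\overline{a}}|Y_s|^{p-2}|Z_s|^2$ combined with H\"older/Young with exponents $\tfrac{2}{2-p}$ and $\tfrac{2}{p}$. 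This only works for $1<p\le 2$: when $p>2$ the exponent $\tfrac{2}{2-p}$ is negative, the factor $\big(e^{\int\overline{a}}|Y_s|\big)^{2-p}$ blows up where $|Y_s|$ is small, and there is no way to dominate $\big(\int|Z|^2\big)^{p/2}$ by $\int|Y|^{p-2}|Z|^2$ together with $\sup|Y|^p$ (Jensen also points the wrong way since $p/2>1$). Since the first assertion is claimed for every $p>1$, the range $p>2$ is not covered by your argument. The paper avoids this by applying It\^o's formula to the \emph{square} $e^{2\int_0^s\overline{a}_r{\rm d}r}|Y_s|^2$ for all $p$, obtaining the pathwise bound $\big(1-\tfrac{(p-1)\wedge1}{\rho}\big)\int e^{2\int\overline{a}}|Z|^2\le 2\sup\big(e^{2\int\overline{a}}|Y|^2\big)+\big(\int e^{\int\overline{a}}f\big)^2+2\big|\int e^{2\int\overline{a}}\langle Y,Z{\rm d}B\rangle\big|$, raising it to the power $p/2$, and then using BDG and Young to absorb the martingale term --- precisely the mechanism you already use for $p\le1$, and it works uniformly for all $p>0$. (Your $|Y|^p$-computation is the right tool for \cref{pro:1.2}, not for this one.) A secondary point: even for $1<p\le2$ your factorization holds only on $\{|Y_s|\neq0\}$, so you would additionally need the standard fact that $\int_0^\tau{\bf 1}_{\{Y_s=0\}}|Z_s|^2{\rm d}s=0$ a.s., which your sketch does not address.
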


\begin{proof}
For each integer $n\geq1$, introduce the following $(\F_t)$-stopping time
$$\tau_{n}:=\inf \left\{t\geq0: \int_{0}^{t}e^{2\int_{0}^{s}\overline{a}_r{\rm d}r}|Z_{s}|^{2} {\rm d}s \geq n\right\} \wedge \tau,$$
with convention that $\inf \emptyset=+\infty$.
Applying It\^{o}'s formula to $e^{2\int_{0}^{t}\overline{a}_s{\rm d}s}|Y_t|^2$ yields that
\begin{align}\label{2.4}
\begin{split}
&e^{2\int_{0}^{t\wedge \tau_n}\overline{a}_s{\rm d}s}|Y_{t\wedge \tau_n}|^2+\int_{t\wedge \tau_n}^{\tau_n} e^{2\int_{0}^{s}\overline{a}_r{\rm d}r}|Z_s|^2{\rm d}s+2\int_{t\wedge \tau_n}^{\tau_n} e^{2\int_{0}^{s}\overline{a}_r{\rm d}r}\overline{a}_s|Y_s|^2{\rm d}s\\
&\ \ =e^{2\int_{0}^{\tau_n}\overline{a}_s{\rm d}s}|Y_{\tau_n}|^2+2\int_{t\wedge \tau_n}^{\tau_n} e^{2\int_{0}^{s}\overline{a}_r{\rm d}r}\langle Y_s,g(s,Y_s,Z_s)\rangle{\rm d}s-2\int_{t\wedge \tau_n}^{\tau_n} e^{2\int_{0}^{s}\overline{a}_r{\rm d}r}\langle Y_s,Z_s{\rm d}B_s\rangle, \ \ t\geq0.
\end{split}
\end{align}
In view of \ref{A:A} and $2ab\leq \rho a^2+\frac{1}{\rho}b^2$, we get that 
\begin{align}\label{2.2}
\begin{split}
2\left<Y_s,g(s,Y_s,Z_s)\right>
\leq& 2u_{s}|Y_s|^2+2v_s|Y_s||Z_s|+2|Y_s|f_{s}\\
\leq& 2u_{s}|Y_s|^2+\frac{\rho}{(p-1)\wedge1} v_{s}^2|Y_s|^2+\frac{(p-1)\wedge1}{\rho}|Z_s|^2+2|Y_s|f_{s}, \ \ s\in[0,\tau_{n}].
\end{split}
\end{align}
It follows from \eqref{2.4}, \eqref{2.2}, $\beta\geq1$ and $\rho>1$ that for each $t\geq0$,
\begin{align}\label{1.3}
\begin{split}
&\left(1-\frac{(p-1)\wedge1}{\rho}\right)\int_{t\wedge \tau_n}^{\tau_n}e^{2\int_{0}^{s}\overline{a}_r{\rm d}r}|Z_s|^2{\rm d}s\\
&\ \ \leq2\sup_{s\in[{t\wedge \tau_n},\tau]}\left(e^{2\int_{0}^{s}\overline{a}_r{\rm d}r}|Y_s|^2\right)+\left(\int_{t\wedge \tau_n}^{\tau_n}e^{\int_{0}^{s}\overline{a}_r{\rm d}r}f_s{\rm d}s\right)^2+2\left|\int_{t\wedge \tau_n}^{\tau_n}e^{2\int_{0}^{s}\overline{a}_r{\rm d}r}\left<Y_s,Z_s{\rm d}B_s\right>\right|.
\end{split}
\end{align}
From \eqref{1.3} and the inequality $(a+b)^{p / 2} \leq 2^{p}\left(a^{p / 2}+b^{p / 2}\right)$, we deduce that for some constant $c_{p,\rho}>0$ depending only on $p$ and $\rho$, we have
\begin{align}\label{2.3}
\begin{split}
\left(\int_{t\wedge \tau_n}^{\tau_n}e^{2\int_{0}^{s}\overline{a}_r{\rm d}r}|Z_s|^2{\rm d}s\right)^\frac{p}{2}&\leq c_{p,\rho}\left[\sup_{s\in[{t\wedge \tau_n},\tau]}\left(e^{p\int_{0}^{s}\overline{a}_r{\rm d}r}|Y_s|^p\right)+\left(\int_{t\wedge \tau_n}^{\tau_n}e^{\int_{0}^{s}\overline{a}_r{\rm d}r}f_s{\rm d}s\right)^{p}\right]\\
&\ \ +c_{p,\rho}\left|\int_{t\wedge \tau_n}^{\tau_n}e^{2\int_{0}^{s}\overline{a}_r{\rm d}r}\left<Y_s,Z_s{\rm d}B_s\right>\right|^{\frac{p}{2}}, \ \ t\geq0.
\end{split}
\end{align}
Furthermore, the Burkholder--Davis--Gundy (BDG for short) inequality and Young's inequality give that for each $0\leq r\leq t<+\infty$ and each $n\geq m \geq1$, we have
\begin{align}\label{11.6}
\begin{split}
&c_{p,\rho}\E\left[\left|\int_{t\wedge\tau_n}^{\tau_n}e^{2\int_{0}^{s}\overline{a}_r{\rm d}r}\left<Y_s,Z_s{\rm d}B_s\right>\right|^{\frac{p}{2}}\bigg|\F_{r\wedge\tau_m}\right]\\
&\ \ \leq d_{p,\rho}\E\left[\left(\int_{t\wedge\tau_n}^{\tau_n}e^{4\int_{0}^{s}\overline{a}_r{\rm d}r}|Y_s|^2|Z_s|^2{\rm d}s\right)^{\frac{p}{4}}\bigg|\F_{r\wedge\tau_m}\right]\\
&\ \ \leq{\frac{{d^2_{p,\rho}}}{2}}\E\left[\sup_{s\in[t\wedge\tau_n,\tau_n]}\left(e^{p\int_{0}^{s}\overline{a}_r{\rm d}r}|Y_s|^p\right)\bigg|\F_{r\wedge\tau_m}\right]+\frac{1}{2}\E\left[\left(\int_{t\wedge\tau_n}^{\tau_n}e^{2\int_{0}^{s}\overline{a}_r{\rm d}r}|Z_s|^2{\rm d}s\right)^{\frac{p}{2}}\bigg|\F_{r\wedge\tau_m}\right],
\end{split}
\end{align}
where $d_{p,\rho}>0$ is a constant depending only on $p$ and $\rho$. Finally, in view of \eqref{11.6}, taking the  conditional mathematical expectation with respect to $\F_{r\wedge\tau_m}$ in both sides of \eqref{2.3}, letting $n\rightarrow +\infty$ and using Fatou's lemma and Lebesgue's dominated convergence theorem  yields that for each $0\leq r\leq t<+\infty$ and $m\geq1$,
\begin{align*}
\begin{split}
&\E\left[\left(\int_{t\wedge{\tau}}^{\tau}e^{2\int_{0}^{s}\overline{a}_r{\rm d}r}|Z_s|^2{\rm d}s\right)^{\frac{p}{2}}\bigg|\F_{r\wedge{\tau_m}}\right]\\
&\ \ \leq
C_{p,\rho}\left(\E\left[\sup_{s\in[t\wedge{\tau},{\tau}]}\left(e^{p{\int_{0}^{s}\overline{a}_r{\rm d}r}}|Y_s|^p\right)\bigg|\F_{r\wedge{\tau_m}}\right]+ \E\left[\left(\int_{t\wedge{\tau}}^{{\tau}}e^{{\int_{0}^{s}\overline{a}_r{\rm d}r}}f_s{\rm d}s\right)^p\bigg|\F_{r\wedge{\tau_m}}\right]\right).
\end{split}
\end{align*}
Thus, the desired inequality \eqref{pro:1.1-1} follows by sending $m\rightarrow\infty$ and using the martingale convergence theorem (see Corollary A.9 in Appendix C of \cite{Oksendal2005}) in both sides of the above inequality.

Finally, similar to the proof of \eqref{pro:1.1-1}, the desired inequality \eqref{pro:1.1-2} can be obtained by applying It\^{o}'s formula to $e^{2\beta\int_{0}^{t}u_s{\rm d}s}|Y_t|^2$, repeating the above steps and replacing \eqref{2.2} and \eqref{1.3} with the following \eqref{2.2-1} and \eqref{1.3-1} respectively:
\begin{align}\label{2.2-1}
\begin{split}
2\left<Y_s,g(s,Y_s,Z_s)\right>
\leq& 2u_{s}|Y_s|^2+2v_s|Y_s||Z_s|+2|Y_s|f_{s}\\
\leq& 2u_{s}|Y_s|^2+\rho v_{s}^2|Y_s|^2+\frac{1}{\rho}|Z_s|^2+2|Y_s|f_{s}, \ \ s\in[0,\tau_{n}],
\end{split}
\end{align}
and
\begin{align}\label{1.3-1}
\begin{split}
&\left(1-\frac{1}{\rho}\right)\int_{t\wedge \tau_n}^{\tau_n}e^{2\beta\int_{0}^{s}u_r{\rm d}r}|Z_s|^2{\rm d}s\\
&\ \ \leq(2+\rho M)\sup_{s\in[{t\wedge \tau_n},\tau]}\left(e^{2\beta\int_{0}^{s}u_r{\rm d}r}|Y_s|^2\right)+\left(\int_{t\wedge \tau_n}^{\tau_n}e^{\beta\int_{0}^{s}u_r{\rm d}r}f_s{\rm d}s\right)^2\\
&\ \ \ \ +2\left|\int_{t\wedge \tau_n}^{\tau_n}e^{2\beta\int_{0}^{s}u_r{\rm d}r}\left<Y_s,Z_s{\rm d}B_s\right>\right|.
\end{split}
\end{align}
The proof of \cref{pro:1.1} is then complete.
\end{proof}

\begin{pro}\label{pro:1.2}
Assume that $p>1$, the generator $g$ satisfies \ref{A:A}, the terminal value $\xi\in L_\tau^p(\overline{a}_\cdot;\R^k)$, and $(Y_t,Z_t)_{t\in[0,\tau]}$ is a solution of BSDE \eqref{BSDE1.1} such that $Y_\cdot\in S_\tau^p(\overline{a}_\cdot;\R^k)$. Then $Z_\cdot\in M_\tau^p(\overline{a}_\cdot;\R^{k\times d})$ and there exists a non-negative constant $K_{p,\rho}$ depending only on $p$ and $\rho$ such that for each $0\leq r\leq t<+\infty$,
\begin{align}\label{006*}
\begin{split}
&\E\left[\sup_{s\in[t\wedge\tau,\tau]}\left(e^{p\int_{0}^{s}\overline{a}_r{\rm d}r}|Y_s|^p\right)\bigg|\F_{r\wedge\tau}\right]+\E\left[\left(\int_{t\wedge\tau}^{\tau}e^{2\int_{0}^{s}\overline{a}_r{\rm d}r}|Z_s|^2{\rm d}s\right)^{\frac{p}{2}}\bigg|\F_{r\wedge\tau}\right]\\[2mm]
&\ \  \leq K_{p,\rho}\left(\E\left[e^{p{\int_{0}^{\tau}\overline{a}_s{\rm d}s}}|\xi|^p\bigg|\F_{r\wedge\tau}\right]+\E\left[\left(
\int_{t\wedge\tau}^{\tau}e^{\int_{0}^{s}\overline{a}_r{\rm d}r}f_s{\rm d}s\right)^p\bigg|\F_{r\wedge\tau}\right]\right).
\end{split}
\end{align}
\end{pro}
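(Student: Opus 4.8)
The plan is to combine an It\^{o} expansion of $e^{p\int_0^t\overline{a}_s{\rm d}s}|Y_t|^p$ with the $Z$-estimate \eqref{pro:1.1-1} already established in \cref{pro:1.1}. Since $y\mapsto|y|^p$ is not twice differentiable at the origin when $1<p<2$, I would first apply It\^{o}'s formula to $e^{p\int_0^t\overline{a}_s{\rm d}s}(\varepsilon+|Y_t|^2)^{p/2}$ and let $\varepsilon\downarrow0$ (equivalently, use the It\^{o}--Tanaka formula for the $p$-th power of a norm), invoking the standard lower bound for the second-order term, namely a constant $\kappa_p:=\tfrac{p[(p-1)\wedge 1]}{2}>0$ times $|Y_s|^{p-2}{\bf 1}_{Y_s\neq0}|Z_s|^2$. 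The localizing stopping times $\tau_n$ should be chosen so as to keep $|Y|$, $\int_0^{\cdot}\overline{a}_s{\rm d}s$ and $\int_0^{\cdot}e^{2\int_0^s\overline{a}_r{\rm d}r}|Z_s|^2{\rm d}s$ bounded on $[0,\tau_n]$, which makes the stochastic integrals that appear genuine (square-integrable) martingales on $[0,\tau_n]$; because $\sup_{[0,\tau]}|Y|<+\infty$ (as $Y_\cdot\in S_\tau^p(\overline{a}_\cdot;\R^k)$), $\int_0^\tau\overline{a}_s{\rm d}s<+\infty$ (by \ref{A:A}) and $\int_0^\tau e^{2\int_0^s\overline{a}_r{\rm d}r}|Z_s|^2{\rm d}s<+\infty$ (by \cref{pro:1.1}), one still has $\tau_n\uparrow\tau$, $\mathbb{P}$-a.s. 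Feeding \ref{A:A} into the drift term, splitting $v_s|Y_s|^{p-1}|Z_s|$ by Young's inequality with weight $\rho/[(p-1)\wedge 1]$, and using $\beta\geq1$ so that $pu_s+\tfrac{p\rho}{2[(p-1)\wedge1]}v_s^2\leq p\overline{a}_s$, the extraneous $|Y|^p$-terms are absorbed by the weight derivative while the fraction $1/\rho$ of the $|Y|^{p-2}|Z|^2$-term produced by Young is absorbed by the term already on the left (this is where $\rho>1$ is used). Writing $\phi_s:=e^{p\int_0^s\overline{a}_r{\rm d}r}|Y_s|^p$, $\kappa'_p:=\kappa_p(1-1/\rho)>0$ and $\Psi:=\int_{t\wedge\tau_n}^{\tau_n}e^{p\int_0^s\overline{a}_r{\rm d}r}|Y_s|^{p-2}{\bf 1}_{Y_s\neq0}|Z_s|^2{\rm d}s$, the outcome is a pathwise inequality, valid for all $v\in[t\wedge\tau_n,\tau_n]$, bounding $\phi_v+\kappa'_p\int_v^{\tau_n}(\cdots){\rm d}s$ by $\phi_{\tau_n}+p\int_{t\wedge\tau_n}^{\tau_n}e^{p\int_0^s\overline{a}_r{\rm d}r}f_s|Y_s|^{p-1}{\rm d}s$ plus a martingale increment $N_{\tau_n}-N_v$ satisfying $\langle N\rangle_{\tau_n}-\langle N\rangle_{t\wedge\tau_n}\leq p^2(\sup_s\phi_s)\Psi$.

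From this single relation I would extract two conditional estimates, conditioning throughout on $\F_{r\wedge\tau_m}$ with $m\leq n$ (so $r\wedge\tau_m\leq t\wedge\tau_n$). Taking $\E[\,\cdot\mid\F_{r\wedge\tau_m}]$ at $v=t\wedge\tau_n$, the signed martingale increment vanishes; bounding $p\int e^{p\int_0^s\overline{a}_r{\rm d}r}f_s|Y_s|^{p-1}{\rm d}s\leq p(\sup_s\phi_s)^{(p-1)/p}\int_{t\wedge\tau}^{\tau}e^{\int_0^s\overline{a}_r{\rm d}r}f_s{\rm d}s$ and then applying Young's inequality with an \emph{arbitrary} small parameter $\eta>0$ yields a bound of $\kappa'_p\E[\Psi\mid\F_{r\wedge\tau_m}]$ by $\E[\phi_{\tau_n}\mid\F_{r\wedge\tau_m}]+\eta\,\E[\sup_{s\in[t\wedge\tau_n,\tau_n]}\phi_s\mid\F_{r\wedge\tau_m}]+C_{\eta,p}\,\E[(\int_{t\wedge\tau}^{\tau}e^{\int_0^s\overline{a}_r{\rm d}r}f_s{\rm d}s)^p\mid\F_{r\wedge\tau_m}]$. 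Taking instead the supremum over $v$ first (via $N_{\tau_n}-N_v=(N_{\tau_n}-N_{t\wedge\tau_n})-(N_v-N_{t\wedge\tau_n})$) and then $\E[\,\cdot\mid\F_{r\wedge\tau_m}]$, the Burkholder--Davis--Gundy inequality together with $\langle N\rangle_{\tau_n}-\langle N\rangle_{t\wedge\tau_n}\leq p^2(\sup_s\phi_s)\Psi$ and Young's inequality allows me to absorb the resulting $\sup_s\phi_s$-contribution into its own left side, leaving $\E[\sup_{s\in[t\wedge\tau_n,\tau_n]}\phi_s\mid\F_{r\wedge\tau_m}]\leq C_a(\E[\phi_{\tau_n}\mid\F_{r\wedge\tau_m}]+\E[(\int_{t\wedge\tau}^{\tau}e^{\int_0^s\overline{a}_r{\rm d}r}f_s{\rm d}s)^p\mid\F_{r\wedge\tau_m}])+C_b\,\E[\Psi\mid\F_{r\wedge\tau_m}]$, with $C_a,C_b$ depending only on $p$ and $\rho$.

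The decisive step is to close the loop. Substituting the second estimate into the first and then choosing $\eta$ small enough — depending only on $C_b$ and $\kappa'_p$, hence only on $p$ and $\rho$ — that $\eta C_b\leq\kappa'_p/2$, one solves first for $\E[\Psi\mid\F_{r\wedge\tau_m}]$ and, feeding this back, for $\E[\sup_s\phi_s\mid\F_{r\wedge\tau_m}]$, each now dominated by a $(p,\rho)$-constant times $\E[\phi_{\tau_n}\mid\F_{r\wedge\tau_m}]+\E[(\int_{t\wedge\tau}^{\tau}e^{\int_0^s\overline{a}_r{\rm d}r}f_s{\rm d}s)^p\mid\F_{r\wedge\tau_m}]$. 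Letting $n\to\infty$ — using $Y_{\tau_n}\to\xi$ (continuity of $Y$ and \eqref{BSDE1.1}), the domination $\phi_{\tau_n}\leq\sup_s\phi_s\in L^1$ (from $Y_\cdot\in S_\tau^p(\overline{a}_\cdot;\R^k)$), and conditional dominated/monotone convergence — and then $m\to\infty$ (by the martingale convergence theorem, as in the proof of \cref{pro:1.1}) upgrades the conditioning to $\F_{r\wedge\tau}$ and replaces $\phi_{\tau_n}$ by $e^{p\int_0^\tau\overline{a}_s{\rm d}s}|\xi|^p$. Adding \eqref{pro:1.1-1} — which also supplies $Z_\cdot\in M_\tau^p(\overline{a}_\cdot;\R^{k\times d})$ — to the $\sup_s\phi_s$-estimate then produces exactly \eqref{006*}.

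I expect the real obstacle to be precisely this circular dependence. The Burkholder--Davis--Gundy step unavoidably reintroduces both $\E[\sup_s\phi_s\mid\cdot]$ and $\E[\Psi\mid\cdot]$ with constants that cannot be made small, and since $\kappa'_p\to0$ as $p\to1^+$, a direct simultaneous absorption of the two terms is impossible. The point of keeping the two estimates apart is that the conditional-expectation estimate carries \emph{no} Burkholder--Davis--Gundy term at all — the martingale drops out upon conditioning — so that the only reappearance of $\sup_s\phi_s$ there comes with the free parameter $\eta$, whereas the supremum estimate absorbs its own $\sup_s\phi_s$; chaining the two in the correct order lets $\eta$ do the work. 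The remaining technical points — justifying the It\^{o} formula for $|Y|^p$ near the origin through the $\varepsilon$-regularization, and arranging the localization so that the stochastic integrals are true rather than merely local martingales — are routine but need to be set up with care.
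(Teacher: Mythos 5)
Your proposal is correct and follows essentially the same route as the paper: the It\^{o}--Tanaka expansion of $e^{p\int_0^t\overline{a}_s{\rm d}s}|Y_t|^p$ with the constant $\frac{p[(p-1)\wedge1]}{2}$, Young's inequality with weight $\rho/[(p-1)\wedge1]$ to absorb the $v_s|Y_s|^{p-1}|Z_s|$ term, the two separate estimates (conditional expectation killing the martingale to bound $\E[\Psi\mid\cdot]$, then BDG for the supremum) chained together, and finally \eqref{pro:1.1-1} for the $Z$-term. The only differences are cosmetic: the paper verifies directly that the stochastic integral is a uniformly integrable martingale via \eqref{pro:1.1-1} rather than localizing, and it performs the Young absorption of the $f_s|Y_s|^{p-1}$ term at the very end on the combined estimate rather than inserting a small parameter $\eta$ into the $\Psi$-estimate first.
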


\begin{proof}
By Corollary 3.5 in Wang et al. (2007) and assumption \ref{A:A}, we obtain that
\begin{align}\label{111.6}
\begin{split}
&e^{p\int_{0}^{t\wedge\tau}\overline{a}_s{\rm d}s}|Y_{t\wedge\tau}|^p+c(p)\int_{t\wedge\tau}^{\tau}e^{p\int_{0}^{s}\overline{a}_r{\rm d}r}|Y_s|^{p-2}{\bf 1}_{|Y_s|\neq0}|Z_s|^2{\rm d}s+p\int_{t\wedge\tau}^{\tau} e^{p\int_{0}^{s}\overline{a}_r{\rm d}r}\overline{a}_s|Y_s|^{p}{\rm d}s\\
&\ \  \leq  e^{p\int_{0}^{\tau}\overline{a}_s{\rm d}s}|\xi|^p+p\int_{t\wedge\tau}^{\tau}e^{p\int_{0}^{s}\overline{a}_r{\rm d}r}\left(u_s|Y_s|^p+v_s|Y_s|^{p-1}|Z_s|+|Y_s|^{p-1}f_s\right){\rm d}s\\
&\ \  \ \  -p\int_{t\wedge\tau}^{\tau}e^{p\int_{0}^{s}\overline{a}_r{\rm d}r}|Y_s|^{p-1}\langle \hat{Y}_{s}, Z_{s} {\rm d} B_{s}\rangle, \ \ t\geq0,
\end{split}
\end{align}
where $c(p):=\frac{p[(p-1)\wedge1]}{2}$.
It follows from assumptions and the previous inequality that
$$\int_{t\wedge\tau}^{\tau}e^{p\int_{0}^{s}\overline{a}_r{\rm d}r}|Y_s|^{p-2}{\bf 1}_{|Y_s|\neq0}|Z_s|^2{\rm d}s<+\infty.$$
Furthermore, using Young's inequality we have
\begin{align}\label{1.7}
\begin{split}
&pe^{p\int_{0}^{s}\overline{a}_r{\rm d}r}v_s|Y_s|^{p-1}|Z_s|\\
&\ \  = p\left(\sqrt{\frac{\rho}{(p-1)\wedge1}}e^{\frac{p}{2}\int_{0}^{s}\overline{a}_r{\rm d}r}v_s|Y_s|^{\frac{p}{2}}\right)\left(\sqrt{\frac{(p-1)\wedge1}{\rho}}e^{\frac{p}{2}\int_{0}^{s}\overline{a}_r{\rm d}r}|Y_s|^{\frac{p}{2}-1}{\bf 1}_{|Y_s|\neq0}|Z_s|\right)\\
&\ \  \leq \frac{p\rho}{2[(p-1)\wedge1]}e^{p\int_{0}^{s}\overline{a}_r{\rm d}r}v_s^2|Y_s|^{p}+\frac{c(p)}{\rho}e^{p\int_{0}^{s}\overline{a}_r{\rm d}r}|Y_s|^{p-2}{\bf 1}_{|Y_s|\neq0}|Z_s|^2, \ \ s\in[0,\tau].
\end{split}
\end{align}
It follows from \eqref{111.6} and \eqref{1.7} together with the facts of $\beta\geq1$ and $\rho>1$ that
\begin{align}\label{1.8}
\begin{split}
&e^{p\int_{0}^{t\wedge\tau}\overline{a}_s{\rm d}s}|Y_{t\wedge\tau}|^p+\left(1-\frac{1}{\rho}\right)c(p)\int_{t\wedge\tau}^{\tau}e^{p\int_{0}^{s}\overline{a}_r{\rm d}r}|Y_s|^{p-2}{\bf 1}_{|Y_s|\neq0}|Z_s|^2{\rm d}s\\
&\ \  \leq X_{\tau}^t-p\int_{t\wedge\tau}^{\tau}e^{p\int_{0}^{s}\overline{a}_r{\rm d}r}|Y_s|^{p-1}\langle \hat{Y}_{s}, Z_{s} {\rm d} B_{s}\rangle, \ \ t\geq0,
\end{split}
\end{align}
where $$X_{\tau}^t=e^{p\int_{0}^{\tau}\overline{a}_s{\rm d}s}|\xi|^p+p\int_{t\wedge\tau}^{\tau}e^{p\int_{0}^{s}\overline{a}_r{\rm d}r}|Y_s|^{p-1}f_s{\rm d}s.$$
It can be verified that $\{M_t=\int_{0}^{t}e^{p\int_{0}^{s}\overline{a}_r{\rm d}r}|Y_s|^{p-1}\langle \hat{Y}_{s}, Z_{s} {\rm d} B_{s}\rangle\}_{t\in[0,\tau]}$ is a uniformly integrable martingale. Indeed, by the BDG inequality, Young's inequality and \eqref{pro:1.1-1} of \cref{pro:1.1}, we know that there exists a constant $\overline{c}_p>0$ depending only on $p$ such that
\begin{align}\label{00.4}
\begin{split}
&p\E\left[\sup_{t\in[0,\tau]}\bigg|\int_{0}^{t}e^{p\int_{0}^{s}\overline{a}_r{\rm d}r}|Y_s|^{p-1}\langle \hat{Y}_{s}, Z_{s} {\rm d} B_{s}\rangle\bigg|\right]\\
&\ \  \leq \overline{c}_p\E\left[\left(\int_{0}^{\tau}e^{2p\int_{0}^{s}\overline{a}_r{\rm d}r}|Y_s|^{2p-2}|Z_{s}|^2 {\rm d}{s}\right)^{\frac{1}{2}}\right]\\
&\ \  \leq
\overline{c}_p\E\left[\sup_{s\in[0,\tau]}\left(e^{(p-1)\int_{0}^{s}\overline{a}_r{\rm d}r}|Y_s|^{p-1}\right)\left(\int_{0}^{\tau}e^{2\int_{0}^{s}\overline{a}_r{\rm d}r}|Z_s|^2{\rm d}{s}\right)^{\frac{1}{2}}\right]\\
&\ \  \leq \frac{\overline{c}_p(p-1)}{p}\E\left[\sup_{s\in[0,\tau]}\left(e^{p\int_{0}^{s}\overline{a}_r{\rm d}r}|Y_s|^{p}\right)\right]+\frac{\overline{c}_p}{p}\E\left[\left(\int_{0}^{\tau}e^{2\int_{0}^{s}\overline{a}_r{\rm d}r}|Z_s|^2{\rm d}{s}\right)^{\frac{p}{2}}\right]<+\infty.
\end{split}
\end{align}
Then, taking the conditional mathematical expectation in both sides of inequality \eqref{1.8} yields that for each $0\leq r\leq t<+\infty$,
\begin{align}\label{00-4}
\begin{split}
\left(1-\frac{1}{{\rho}}\right)c(p)\E\left[\int_{t\wedge\tau}^{\tau}e^{p\int_{0}^{s}\overline{a}_r{\rm d}r}|Y_s|^{p-2}{\bf 1}_{|Y_s|\neq0}|Z_s|^2{\rm d}s\bigg|\F_{r\wedge\tau}\right]\leq\E\left[X_\tau^t|\F_{r\wedge\tau}\right].
\end{split}
\end{align}
On the other hand, by the BDG inequality and $2ab\leq a^2+b^2$, we also deduce that there exists a constant $k_p>0$ depending only on $p$ such that for each $0\leq r\leq t<+\infty$,
\begin{align*}
\begin{split}
&p\E\left[\sup_{s\in[t\wedge\tau,\tau]}\bigg|\int_{s}^{\tau}e^{p\int_{0}^{s}\overline{a}_r{\rm d}r}|Y_s|^{p-1}\langle \hat{Y}_{s}, Z_{s} {\rm d} B_{s}\rangle\bigg|\bigg|\F_{r\wedge\tau}\right]\\
&\ \  \leq k_p\E\left[\left(\int_{t\wedge\tau}^{\tau}e^{2p\int_{0}^{s}\overline{a}_r{\rm d}r}|Y_s|^{2p-2}|Z_{s}|^2 {\rm d}{s}\right)^{\frac{1}{2}}\bigg|\F_{r\wedge\tau}\right]\\
&\ \  \leq k_p\E\left[\sup_{s\in[t\wedge\tau,\tau]}\left(e^{\frac{p}{2}\int_{0}^{s}\overline{a}_r{\rm d}r}|Y_s|^{\frac{p}{2}}\right)\left(\int_{t\wedge\tau}^{\tau}e^{p\int_{0}^{s}\overline{a}_r{\rm d}r}|Y_s|^{p-2}{\bf 1}_{|Y_s|\neq0}|Z_s|^2{\rm d}s\right)^{\frac{1}{2}}\bigg|\F_{r\wedge\tau}\right]\\
&\ \  \leq \frac{1}{2}\E\left[\sup_{s\in[t\wedge\tau,\tau]}\left(e^{p\int_{0}^{s}\overline{a}_r{\rm d}r}|Y_s|^{p}\right)\bigg|\F_{r\wedge\tau}\right] +\frac{k_p^2}{2}\E\left[\int_{t\wedge\tau}^{\tau}e^{p\int_{0}^{s}\overline{a}_r{\rm d}r}|Y_s|^{p-2}{\bf 1}_{|Y_s|\neq0}|Z_s|^2{\rm d}s\bigg|\F_{r\wedge\tau}\right].
\end{split}
\end{align*}
Then, in view of the last inequality and \eqref{1.8}, we have for each $0\leq r\leq t<+\infty$,
\begin{align}\label{1.9}
\begin{split}
&\frac{1}{2}\E\left[\sup_{s\in[t\wedge\tau,\tau]}\left(e^{p\int_{0}^{s}\overline{a}_r{\rm d}r}|Y_s|^p\right)\bigg|\F_{r\wedge\tau}\right]\\
&\ \  \leq \E[X_{\tau}^t|\F_{r\wedge\tau}]+\frac{k_p^2}{2}\E\left[\int_{t\wedge\tau}^{\tau}e^{p\int_{0}^{s}\overline{a}_r{\rm d}r}|Y_s|^{p-2}{\bf 1}_{|Y_s|\neq0}|Z_s|^2{\rm d}s\bigg|\F_{r\wedge\tau}\right].
\end{split}
\end{align}
Combining \eqref{00-4} and \eqref{1.9} we get the existence of a constant $\overline{k}_{p,\rho}>0$ depending only on $p$ and $\rho$ such that for each $0\leq r\leq t<+\infty$,
\begin{align}\label{1.10}
\begin{split}
\E\left[\sup_{s\in[t\wedge\tau,\tau]}\left(e^{p\int_{0}^{s}\overline{a}_r{\rm d}r}|Y_s|^p\right)\bigg|\F_{r\wedge\tau}\right]\leq \overline{k}_{p,\rho}\E[X_{\tau}^t|\F_{r\wedge\tau}].
\end{split}
\end{align}
Finally, in view of Young's inequality, for each $0\leq r\leq t<+\infty$ we have
\begin{align}\label{1.11}
\begin{split}
&p\overline{k}_{p,\rho}\E\left[\int_{t\wedge\tau}^{\tau}e^{p\int_{0}^{s}\overline{a}_r{\rm d}r}|Y_s|^{p-1}f_s{\rm d}s\bigg|\F_{r\wedge\tau}\right]\\
&\ \ \leq p\overline{k}_{p,\rho}\E\left[\sup_{s\in[t\wedge\tau,\tau]}\left(e^{(p-1)\int_{0}^{s}\overline{a}_r{\rm d}r}|Y_s|^{p-1}\int_{t\wedge\tau}^{\tau}e^{\int_{0}^{s}\overline{a}_r{\rm d}r}f_s{\rm d}s\right)\bigg|\F_{r\wedge\tau}\right]\\
&\ \ \leq \frac{1}{2}\E\left[\sup_{s\in[t\wedge\tau,\tau]}\left(e^{p\int_{0}^{s}\overline{a}_r{\rm d}r}|Y_s|^{p}\right)\bigg|\F_{r\wedge\tau}\right]+\tilde{k}_{p,\rho}\E\left[\left(\int_{t\wedge\tau}^{\tau}e^{\int_{0}^{s}\overline{a}_r{\rm d}r}f_s{\rm d}s\right)^p\bigg|\F_{r\wedge\tau}\right],
\end{split}
\end{align}
where $\tilde{k}_{p,\rho}>0$ is a constant depending only on $p$ and $\rho$. Then, the desired assertion \eqref{006*} follows from \eqref{1.10}, \eqref{1.11} and \eqref{pro:1.1-1} of \cref{pro:1.1} together with the definition of $X_\tau^t$. The proof is complete.
\end{proof}

\section{Weighted $L^p~(p>1)$ solutions}
\setcounter{equation}{0}
In this section, let $p>1$ be a given constant. We shall put forward and prove an existence and uniqueness result for the weighted $L^p$ solution to BSDE \eqref{BSDE1.1} which extends the corresponding conclusion in \cite{Li2024} to the $L^p$ solution case. Furthermore, we will introduce several corollaries, remarks and examples to show that it also generalizes some existing results including Theorem 4.2 in \cite{Briand2003SPA}, Theorem 4.1 in \cite{WangRanChen2007}, Theorem 3.1 in \cite{Xiao2015} and so on.

In the rest of this paper, we always assume that $\beta$, $\rho$ and $M$ are given constants such that $\beta\geq1$, $\rho>1$ and $M>0$, $\mu_\cdot$ and $\nu_\cdot$ be two given nonnegative $(\F_t)$-progressively measurable processes and $$a_\cdot:=\beta\mu_\cdot+\frac{\rho}{2[(p-1)\wedge1]} \nu_\cdot^2$$
satisfying
$$\int_{0}^{\tau}{a}_t{\rm d}t<+\infty.$$

\subsection{Statement of the existence and uniqueness result for the weighted $L^p~(p>1)$ solution}
Let us start with introducing the following assumptions on $g$.

\begin{enumerate}
\renewcommand{\theenumi}{(H1)}
\renewcommand{\labelenumi}{\theenumi}
\item\label{A:H1} $\E\left[\left(\int_0^\tau e^{\int_{0}^{s}a_r{\rm d}r}|g(s,0,0)|{\rm d}s\right)^p\right]<+\infty.$
\renewcommand{\theenumi}{(H2)}
\renewcommand{\labelenumi}{\theenumi}
\item\label{A:H2} ${\rm d}\mathbb{P}\times{\rm d} t-a.e.$, $g(\omega,t,\cdot,z)$ is continuous for each $z\in\R^{k\times d}$.
\renewcommand{\theenumi}{(H3)}
\renewcommand{\labelenumi}{\theenumi}
\item\label{A:H3} $g$ has a general growth in $y$, i.e., there exists an $(\F_t)$-progressively measurable nonnegative decreasing process $(\alpha_t)_{t\in[0,\tau]}$ taking values in $(0,1]$ such that for each $r\in \R_+$, it holds that
$$\E\left[\int_0^\tau e^{\beta \int_{0}^{t}\mu_s{\rm d}s}\psi_{r}^{\alpha_\cdot}(t)dt\right]<+\infty$$
with
$$\psi_{r}^{\alpha_\cdot}(t):=\sup_{|y|\leq r\alpha_t}\left\{ \left|g(t,y,0)-g(t,0,0)\right|\right\}, \ \ t\in[0,\tau].$$
\renewcommand{\theenumi}{(H4)}
\renewcommand{\labelenumi}{\theenumi}
\item\label{A:H4} $g$ satisfies a stochastic monotonicity condition in $y$, i.e., for each $y_1, \ y_2\in\R^k$ and $z\in\R^{k\times d}$,
$$\left\langle y_1-y_2,g(\omega,t,y_1,z)-g(\omega,t,y_2,z)\right\rangle\leq \mu_t(\omega)|y_1-y_2|^2, \ \ t\in[0,\tau].$$
\renewcommand{\theenumi}{(H5)}
\renewcommand{\labelenumi}{\theenumi}
\item\label{A:H5} $g$ satisfies a stochastic Lipschitz continuity condition in $z$, i.e., for each $y\in\R^k$ and $z_1,z_2\in\R^{k\times d}$,
$$\left|g(\omega,t,y,z_1)-g(\omega,t,y,z_2)\right|\leq \nu_t(\omega)|z_1-z_2|, \ \ t\in[0,\tau].$$
\end{enumerate}

We state the following theorem which is the main result of this subsection and establishes a general existence and
uniqueness result of the weighted $L^p$ solution.
\begin{thm}\label{thm:3.4}
Assume that $p>1$, $\xi\in L_\tau^p(a_\cdot;\R^k)$, and the generator $g$ satisfies assumptions \ref{A:H1}-\ref{A:H5}. Then, BSDE (1.1) admits a unique weighted $L^p$ solution $(Y_t,Z_t)_{t\in[0,\tau]}$ in  $H_\tau^p(a_\cdot;\R^{k}\times\R^{k\times d})$.
\end{thm}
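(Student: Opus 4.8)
The plan is to split the argument into uniqueness and existence, the latter proceeding by a two-stage approximation. For uniqueness, if $(Y^1,Z^1)$ and $(Y^2,Z^2)$ are two weighted $L^p$ solutions, then $(\delta Y,\delta Z):=(Y^1-Y^2,Z^1-Z^2)$ solves BSDE $(0,\tau,\widehat g)$ with $\widehat g(s,y,z):=g(s,y+Y^2_s,z+Z^2_s)-g(s,Y^2_s,Z^2_s)$; decomposing $\langle\delta Y_s,g(s,Y^1_s,Z^1_s)-g(s,Y^2_s,Z^2_s)\rangle$ into a $y$-increment and a $z$-increment and using \ref{A:H4} and \ref{A:H5} shows that $\widehat g$ satisfies \ref{A:A} with $u_\cdot=\mu_\cdot$, $v_\cdot=\nu_\cdot$, $f_\cdot\equiv0$, so $\overline a_\cdot=a_\cdot$. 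Since $\delta Y_\cdot\in S^p_\tau(a_\cdot;\R^k)$, applying \cref{pro:1.2} with zero terminal value and $f_\cdot\equiv0$ forces $\E[\sup_{s\in[0,\tau]}(e^{p\int_0^s a_r\,{\rm d}r}|\delta Y_s|^p)]=0$, whence $(\delta Y,\delta Z)\equiv(0,0)$.

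For existence, in the first stage I would add to \ref{A:H1}--\ref{A:H5} a stochastic Lipschitz condition in $y$, $|g(\omega,t,y_1,z)-g(\omega,t,y_2,z)|\le\lambda_t(\omega)|y_1-y_2|$, and solve BSDE $(\xi,\tau,g)$ by a fixed point: the map carrying $(U,V)$ to the solution of the BSDE with frozen coefficient $g(\cdot,U_\cdot,V_\cdot)$ is well defined on $H^p_\tau(a_\cdot;\R^k\times\R^{k\times d})$ through the martingale representation theorem and \ref{A:H1}, and the a priori estimates of \cref{pro:1.1} and \cref{pro:1.2} make it a contraction on each stochastic interval $[\sigma_{j-1},\sigma_j]$ cut out by the stopping times $\sigma_j:=\inf\{t\ge0:\int_0^t(\mu_s+\nu_s^2+\lambda_s)\,{\rm d}s\ge c\}\wedge\tau$ for a sufficiently small $c>0$; solving BSDE $(\xi,\tau,g)$ backward along this partition and patching the pieces — as is done for random and infinite horizons in \cite{LiT2019} and \cite{Li2024} — then produces the solution on $[0,\tau]$.

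In the second stage, for $g$ satisfying only \ref{A:H1}--\ref{A:H5}, I would approximate $g$ by generators $g_n$ that are stochastically Lipschitz in $y$, obtained by truncating the $y$-variable at the level $n\alpha_t$ (the level entering $\psi^{\alpha_\cdot}_n$) and mollifying in $y$ in the spirit of \cite{Briand2003SPA} and \cite{Xiao2015}, in such a way that $g_n(\omega,t,y,z)=g(\omega,t,y,z)$ whenever $|y|\le n\alpha_t$, that $g_n$ still satisfies \ref{A:H4} (with $\mu_\cdot$) and \ref{A:H5} (with $\nu_\cdot$), and that $g_n(\cdot,0,0)=g(\cdot,0,0)$. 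Then $g_n$ obeys \ref{A:A} with $u_\cdot=\mu_\cdot$, $v_\cdot=\nu_\cdot$ and $f_\cdot=|g(\cdot,0,0)|$ — crucially independent of $n$, since the monotonicity \ref{A:H4} already absorbs the growth in $y$ — while the Lipschitz coefficient of $g_n$ (of order $n\,\psi^{\alpha_\cdot}_n(\cdot)$) is integrable against $e^{\beta\int_0^t\mu_s\,{\rm d}s}$ by \ref{A:H3}; hence the first stage yields a weighted $L^p$ solution $(Y^n,Z^n)$ of BSDE $(\xi,\tau,g_n)$, and \cref{pro:1.2} with $f_\cdot=|g(\cdot,0,0)|$ bounds $\|(Y^n,Z^n)\|_{p;a_\cdot}$ uniformly in $n$ using \ref{A:H1}.

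The limit passage is the step I expect to be the main obstacle. The goal is to show that $(Y^n,Z^n)_{n\ge1}$ is Cauchy in $H^p_\tau(a_\cdot;\R^k\times\R^{k\times d})$: for $n\ge m$ the difference solves a BSDE whose generator increment is bounded via \ref{A:H4}--\ref{A:H5} by $\mu_s|Y^n_s-Y^m_s|+\nu_s|Z^n_s-Z^m_s|$ plus the defect $|g_n(s,Y^m_s,Z^m_s)-g(s,Y^m_s,Z^m_s)|$, which vanishes on $\{|Y^m_s|\le m\alpha_s\}$; so \cref{pro:1.2} reduces everything to proving $\E[(\int_0^\tau e^{\int_0^s a_r\,{\rm d}r}|g_m(s,Y^m_s,Z^m_s)-g(s,Y^m_s,Z^m_s)|\,{\rm d}s)^p]\to0$ as $m\to\infty$. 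The delicate point is that, for a random and possibly infinite horizon on which $\alpha_t$ may degenerate, the uniform $L^p$ bound on $Y^m$ does not by itself keep $|Y^m_s|$ below $m\alpha_s$; I would address this by localising along $\sigma_{m,K}:=\inf\{t\ge0:e^{\beta\int_0^t\mu_s\,{\rm d}s}|Y^m_t|\ge K\alpha_t\}\wedge\tau$ and running an It\^{o} estimate on $e^{p\beta\int_0^t\mu_s\,{\rm d}s}|Y^m_t|^p$ (discarding its martingale part) to obtain an $m$-free pathwise control of the right order, which, combined with \ref{A:H3} and the monotonicity of $\alpha_\cdot\in(0,1]$, forces the excursions of $|Y^m_\cdot|$ above $m\alpha_\cdot$, hence the defect, to vanish in the required norm. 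Finally, once $(Y,Z)\in H^p_\tau(a_\cdot;\R^k\times\R^{k\times d})$ denotes the limit, the stability of the It\^{o} integral and dominated convergence for $\int_0^\cdot g_n(s,Y^n_s,Z^n_s)\,{\rm d}s$ — justified by \ref{A:H2}, \ref{A:H5}, \ref{A:H3} and the uniform estimates — identify $(Y,Z)$ as the weighted $L^p$ solution of BSDE $(\xi,\tau,g)$.
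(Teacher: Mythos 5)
Your uniqueness argument coincides with the paper's: the difference of two solutions solves a BSDE whose generator satisfies \ref{A:A} with $f_\cdot\equiv0$ and $\overline a_\cdot=a_\cdot$, and \cref{pro:1.2} forces it to vanish. The existence part, however, departs from the paper's route and is where the genuine gaps lie. The paper never mollifies or truncates $g$ in the $y$-variable. It sets $\tilde\alpha_t:=e^{-\int_0^t a_s{\rm d}s}$ and truncates only the \emph{data}: $\xi_n:=q_{n\tilde\alpha_\tau}(\xi)$ and $g_n(t,y,z):=g(t,y,z)-g(t,0,0)+q_{ne^{-t}\tilde\alpha_t}(g(t,0,0))$, where $q_r(x):=\frac{xr}{|x|\vee r}$. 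Since the $y$- and $z$-dependence of $g$ is untouched, $g_n$ inherits \ref{A:H2}--\ref{A:H5} with the same $\mu_\cdot$, $\nu_\cdot$, $\alpha_\cdot$, satisfies \ref{A:H1} with $p=2$, and $\xi_n$ is bounded; Theorem 3.1 of \cite{Li2024} is then invoked as a black box to produce the weighted $L^2$ solution $(Y^n,Z^n)$, a conditional form of \cref{pro:1.2} with $p=2$ gives the pathwise bound $e^{\int_0^{t}a_s{\rm d}s}|Y^n_t|\le\sqrt{2K_\rho}\,n$ (so $(Y^n,Z^n)$ actually lies in $H_\tau^p(a_\cdot;\R^{k}\times\R^{k\times d})$), and --- decisively --- the generator of the difference of two approximants satisfies \ref{A:A} with $f_t=|g(t,0,0)|{\bf 1}_{|g(t,0,0)|>ne^{-t}\tilde\alpha_t}$, which vanishes in the required norm by \ref{A:H1} and dominated convergence. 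The Cauchy property in $H_\tau^p$ is therefore immediate, and no comparison of $|Y^m_s|$ with $m\alpha_s$ ever arises.

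By contrast, three steps of your construction do not close as stated. First, your Picard stage needs a \emph{deterministic} bound on $\int_0^\tau(\mu_s+\nu_s^2+\lambda_s){\rm d}s$ to cut $[0,\tau]$ into a fixed finite number of contraction intervals, whereas the standing hypothesis here is only $\int_0^\tau a_t\,{\rm d}t<+\infty$ almost surely; the number of your $\sigma_j$'s is then a.s. finite but unbounded, and \cite{LiT2019}, which you cite for the patching, assumes precisely the bounded-integral condition that this theorem drops. Second, radial truncation of the $y$-variable at level $n\alpha_t$ does not preserve the multidimensional monotonicity \ref{A:H4} (the inner product in \ref{A:H4} is against $y_1-y_2$, not against the truncated increment), and mollification alone does not yield a stochastically Lipschitz generator without a growth bound that \ref{A:H3} provides only on the shrinking ball $|y|\le r\alpha_t$; so ``in such a way that $g_n$ still satisfies \ref{A:H4}'' asserts rather than solves the hard part. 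Third, the defect estimate you yourself flag as the main obstacle requires keeping $|Y^m_s|$ below $m\alpha_s$, which is genuinely out of reach when $\alpha_t$ degenerates (e.g. $\alpha_t=(1+\sup_{0\le s\le t}|B_s|^6)^{-1}$ in \cref{ex:1}); the localization you sketch does not deliver the claimed $m$-free pathwise control. These are not cosmetic issues: the paper's specific choice of \emph{what} to truncate, and at the level $\tilde\alpha_t=e^{-\int_0^t a_s{\rm d}s}$, is exactly what makes all three difficulties disappear, and is the actual content of the proof beyond the a priori estimates.
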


\begin{rmk}\label{rmk3.3}
The above assumptions \ref{A:H1} with $p=2$, \ref{A:H2}-\ref{A:H5} are used in \cite{Li2024}, and $\mu_\cdot$ and $\nu_\cdot$ aren't imposed any restriction of finite moment. Additionally, based on the proof procedure of Corollary 3.2 in \cite{Li2024}, it is clear that BSDEs with stochastic Lipschitz generators satisfy assumptions \ref{A:H2}-\ref{A:H5}, and \cref{thm:3.4} can apply to them.
\end{rmk}

%
%


To facilitate a comprehensive comparison between \cref{thm:3.4} and related existing results, let us introduce the following stronger assumptions of the generator $g$ than \ref{A:H1} and \ref{A:H3}, and the following remark.
\begin{enumerate}
\renewcommand{\theenumi}{(H1a)}
\renewcommand{\labelenumi}{\theenumi}
\item\label{A:H1a} There exists $\delta>\left(\frac{2}{p-1}\vee3\right)$ and $\varepsilon>0$ such that $\mu_\cdot+\nu_\cdot^2\geq \varepsilon$ and
    $$\E\left[\left(\int_0^\tau e^{\delta\int_0^s(\mu_r+\nu_r^2){\rm d}r}\bigg|\frac{g(s,0,0)}{\sqrt{\mu_s+\nu_s^2}}\bigg|^2{\rm d}s\right)^{\frac{p}{2}}\right]<+\infty.$$
\renewcommand{\theenumi}{(H1b)}
\renewcommand{\labelenumi}{\theenumi}
\item\label{A:H1b} $\Dis \E\left[\int_0^\tau e^{p\theta\int_0^s (\mu_r+\frac{1}{2(p-1)}\nu^2_r) {\rm d}r}(\mu_s+\nu^2_s)^{-(p-1)}\left|g(s,0,0)\right|^p{\rm d}s\right]<+\infty$, where $\theta>1$ and $\mu_\cdot+\nu^2_\cdot\geq\varepsilon>0$.
    \renewcommand{\theenumi}{(H3a)}
\renewcommand{\labelenumi}{\theenumi}
\item\label{A:H3a}
For each $r\in \R_+$, it holds that
$$
\E\left[\left(\int_0^\tau e^{\int_{0}^{t}\mu_s{\rm d}s}\psi_{r}(t)dt\right)^p\right]<+\infty
$$
with
$$\psi_{r}(t):=\sup_{|y|\leq r}\left|g(t,y,0)-g(t,0,0)\right|, \ \ t\in[0,\tau].\vspace{-0.1cm}$$
\end{enumerate}
\vspace{1mm}

\begin{rmk}\label{rmk:h1}
We have the following remarks.

\vspace{1.5mm}
(i) \ref{A:H1a} comes from (A4) of \cite{WangRanChen2007} and (H3) of \cite{Owo(2017)}. If \ref{A:H1a} holds, then taking $\beta:=\frac{\delta+3}{4}>1$, ${\rho}:=\frac{(\delta+3)[(p-1)\wedge1]}{2}>1$, and using H\"{o}lder's inequality we have
\begin{align*}
\begin{split}
&\E\left[\left(\int_0^\tau e^{\int_0^s\left(\beta\mu_r+\frac{{\rho}}{2[(p-1)\wedge1]}\nu_r^2\right){\rm d}r}|g(s,0,0)|{\rm d}s\right)^p\right]\\
&\ \ =\E\left[\left(\int_0^\tau e^{\frac{\delta}{2}\int_0^s(\mu_r+\nu_r^2){\rm d}r}\bigg|\frac{g(s,0,0)}{\sqrt{\mu_s+\nu_s^2}}\bigg|
e^{-\frac{\delta-3}{4}\int_0^s(\mu_r+\nu_r^2){\rm d}r}\sqrt{\mu_s+\nu_s^2}{\rm d}s\right)^p\right]\\
&\ \ \leq\E\left[\left(\int_0^\tau e^{{\delta}\int_0^s(\mu_r+\nu_r^2){\rm d}r}\bigg|\frac{g(s,0,0)}{\sqrt{\mu_s+\nu_s^2}}\bigg|^2{\rm d}s\right)^\frac{p}{2}\right]
\E\left[\left(\int_0^\tau e^{-\frac{\delta-3}{2}\int_0^s(\mu_r
+\nu_r^2){\rm d}r}(\mu_s+\nu_s^2){\rm d}r\right)^\frac{p}{2}\right]\\
&\ \ <+\infty,
\end{split}
\end{align*}
which means that assumption \ref{A:H1} is true. Consequently, \ref{A:H1} is weaker than \ref{A:H1a} and \cref{thm:3.4} generalizes Theorem 4.1 of \cite{WangRanChen2007} where $\tau$ is a bounded stopping time and $\int_0^\tau (\mu_t+\nu_t^2){\rm d}t\leq M$ for $p\in(1,2)$ and Theorem 4.3 of \cite{Owo(2017)} with $g\equiv0$.

\vspace{1.5mm}
(ii) \ref{A:H1b} comes from 4 of Theorem 5.2 in \cite{O2020}. If \ref{A:H1b} holds for some $p\in(1,2)$, then taking $\beta:=\frac{3\theta+1}{4}>1$, ${\rho}:=\frac{(p-1)(1-\theta)+2\theta}{2}>1$, and using H\"{o}lder's inequality we have
\begin{align}
&\E\left[\left(\int_0^\tau e^{\int_0^s\left(\beta\mu_r+\frac{{\rho}}{2(p-1)}\nu_r^2\right){\rm d}r}|g(s,0,0)|{\rm d}s\right)^p\right]\nonumber\\
&\ \ =\E\left[\left(\int_0^\tau e^{\theta\int_0^s(\mu_r+\frac{\nu_r^2}{2(p-1)}){\rm d}r}\left(\mu_s+\nu_s^2\right)^{-\frac{p-1}{p}}|g(s,0,0)|
e^{-\frac{\theta-1}{4}\int_0^s(\mu_r+\nu_r^2){\rm d}r}\left(\mu_s+\nu_s^2\right)^{\frac{p-1}{p}}{\rm d}s\right)^p\right]\nonumber\\
&\ \ \leq\E\left[\int_0^\tau e^{p\theta\int_0^s(\mu_r+\frac{\nu_r^2}{2(p-1)}){\rm d}r}\left(\mu_s+\nu_s^2\right)^{-(1-p)}|g(s,0,0)|^p{\rm d}s\right]\nonumber\\
&\ \ \ \ \ \times\E\left[\left(\int_0^\tau e^{-\frac{p(\theta-1)}{4(p-1)}\int_0^s(\mu_r
+\nu_r^2){\rm d}r}(\mu_s+\nu_s^2){\rm d}s\right)^{p-1}\right]<+\infty.\nonumber
\end{align}
Thus assumption \ref{A:H1} holds for $p\in(1,2)$. Consequently, \cref{thm:3.4} can be compared with Theorem 5.2 in \cite{O2020}, where the type of BSDE \eqref{BSDE1.1*} was addressed.

\vspace{1.5mm}
(iii) \ref{A:H3a} was used in Theorem 5.30, Theorem 5.57, and Corollary 5.59 of \cite{PardouxandRascanu(2014)}. It is clear that \ref{A:H3a} can imply \ref{A:H3} with $\beta=1$ and $\alpha_\cdot\equiv 1$, while \ref{A:H3} can not imply \ref{A:H3a}. In fact, if the generator $g$ satisfies stochastic Lipschitz continuity condition in $(y,z)$, then it follows from Corollary 3.2 of \cite{Li2024} that this $g$ satisfies \ref{A:H3}, while it does not satisfy \ref{A:H3a} under the situation of
$$
\E\left[\left(\int_0^\tau e^{\int_{0}^{t}\mu_s{\rm d}s}\mu_t dt\right)^p\right]=+\infty.
$$
\end{rmk}

Next, we introduce another stronger assumption \ref{A:H3b} of the generator $g$ than \ref{A:H3}, and give the following \cref{cor:h3s}.
\begin{enumerate}
\renewcommand{\theenumi}{(H3b)}
\renewcommand{\labelenumi}{\theenumi}
\item\label{A:H3b} There exists an $(\F_t)$-progressively measurable nonnegative process $(\tilde{\mu}_t)_{t\in[0,\tau]}$ without any integrability requirement such that for each $y\in \R^k$,
    $$|g(t,y,0)-g(t,0,0)|\leq \tilde{\mu}_t\varphi(|y|), \ \ t\in[0,\tau],$$
    where $\varphi(\cdot):\R_+\rightarrow \R_+$ is a nondecreasing convex function with $\varphi(0)=0$.
\end{enumerate}

\begin{cor}\label{cor:h3s}
Assume that $p>1$, $\xi\in L_\tau^p(a_\cdot;\R^k)$, and the generator $g$ satisfies \ref{A:H1}, \ref{A:H2}, \ref{A:H3b}, \ref{A:H4} and \ref{A:H5}. Then, BSDE (1.1) admits a unique weighted $L^p$ solution $(Y_t,Z_t)_{t\in[0,\tau]}$ in  $H_\tau^p(a_\cdot;\R^{k}\times\R^{k\times d})$.
\end{cor}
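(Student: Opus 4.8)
The plan is to show that assumption \ref{A:H3b} implies assumption \ref{A:H3} (with the same $\beta$ and a suitably constructed $\alpha_\cdot$), so that \cref{cor:h3s} becomes an immediate consequence of \cref{thm:3.4}: all the remaining hypotheses \ref{A:H1}, \ref{A:H2}, \ref{A:H4}, \ref{A:H5} already appear verbatim in the statement, and only the general growth condition \ref{A:H3} needs to be produced from the data $(\tilde\mu_\cdot,\varphi)$ of \ref{A:H3b}.

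For the construction, set $h_t:=e^{\beta\int_0^t\mu_s{\rm d}s}\tilde\mu_t$ for $t\in[0,\tau]$, which is a nonnegative $(\F_t)$-progressively measurable process, and define
$$\alpha_t:=\exp\left(-\int_0^t h_s{\rm d}s\right), \qquad t\in[0,\tau].$$
Then $\alpha_\cdot$ is $(\F_t)$-progressively measurable, nonnegative, decreasing and takes values in $(0,1]$; the strict positivity uses that $\int_0^t h_s{\rm d}s<+\infty$ for $t\in[0,\tau]$, which follows from $\int_0^\tau\mu_s{\rm d}s<+\infty$ together with the finiteness of the process $\tilde\mu_\cdot$ (and, if one wants to be completely safe about the running integral not exploding before $\tau$, this can be secured by the usual localization along $\sigma_n:=\inf\{t:\int_0^t h_s{\rm d}s\geq n\}\wedge\tau$ and a patching argument). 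The crucial point is the elementary identity obtained by recognizing $h_t\alpha_t$ as $-\frac{{\rm d}}{{\rm d}t}\alpha_t$:
$$\int_0^\tau e^{\beta\int_0^t\mu_s{\rm d}s}\tilde\mu_t\alpha_t{\rm d}t=\int_0^\tau h_t\alpha_t{\rm d}t=1-\exp\left(-\int_0^\tau h_s{\rm d}s\right)\leq 1,\qquad \ps$$
so that this weighted integral is bounded by $1$ no matter how wild $\tilde\mu_\cdot$ is — this is exactly what allows \ref{A:H3b} to dispense with any integrability requirement on $\tilde\mu_\cdot$.

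To conclude, I would combine this with convexity of $\varphi$. Since $\varphi$ is nondecreasing, convex and $\varphi(0)=0$, for every $r\geq0$ and $t\in[0,\tau]$ one has $\varphi(r\alpha_t)\leq\alpha_t\varphi(r)$ (because $\alpha_t\in(0,1]$), whence by \ref{A:H3b}
$$\psi_r^{\alpha_\cdot}(t)=\sup_{|y|\leq r\alpha_t}\left|g(t,y,0)-g(t,0,0)\right|\leq \tilde\mu_t\varphi(r\alpha_t)\leq \tilde\mu_t\alpha_t\varphi(r).$$
Taking expectations and using the displayed identity gives, for every $r\in\R_+$,
$$\E\left[\int_0^\tau e^{\beta\int_0^t\mu_s{\rm d}s}\psi_r^{\alpha_\cdot}(t){\rm d}t\right]\leq\varphi(r)\,\E\left[\int_0^\tau e^{\beta\int_0^t\mu_s{\rm d}s}\tilde\mu_t\alpha_t{\rm d}t\right]\leq\varphi(r)<+\infty,$$
which is precisely \ref{A:H3} for this $\alpha_\cdot$. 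Hence $g$ satisfies \ref{A:H1}--\ref{A:H5}, and \cref{thm:3.4} yields the unique weighted $L^p$ solution of BSDE \eqref{BSDE1.1} in $H_\tau^p(a_\cdot;\R^{k}\times\R^{k\times d})$.

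I expect the only genuine obstacle to be the routine bookkeeping ensuring that $\alpha_\cdot$ really stays in $(0,1]$ on all of $[0,\tau]$ (equivalently, that $\int_0^\cdot h_s{\rm d}s$ does not explode before $\tau$); everything substantive reduces to the one-line telescoping identity for $\int_0^\tau h_t\alpha_t{\rm d}t$ and the one-line convexity estimate $\varphi(r\alpha_t)\leq\alpha_t\varphi(r)$. An alternative that sidesteps the positivity discussion is to work with $\alpha_t:=(1+\int_0^t h_s{\rm d}s)^{-1}$ on each $[0,\sigma_n]$ and patch, but the exponential choice above is cleaner and is the one I would use.
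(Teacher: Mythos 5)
Your reduction is exactly the paper's: the paper disposes of \cref{cor:h3s} in one line by citing (iv) of Remark 3.3 in \cite{Li2024} for the implication \ref{A:H3b} $\Rightarrow$ \ref{A:H3} and then invoking \cref{thm:3.4}, whereas you actually prove that implication; your telescoping identity $\int_0^\tau h_t e^{-\int_0^t h_s{\rm d}s}{\rm d}t=1-e^{-\int_0^\tau h_s{\rm d}s}\leq 1$ combined with the convexity bound $\varphi(r\alpha_t)\leq\alpha_t\varphi(r)$ is a correct and self-contained way to do it. The one soft spot is where you assert that positivity of $\alpha_\cdot$ ``follows from the finiteness of the process $\tilde\mu_\cdot$'': it does not. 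You need $\int_0^t e^{\beta\int_0^s\mu_r{\rm d}r}\tilde\mu_s{\rm d}s<+\infty$ for every $t\in[0,\tau]$, and a finite-valued process need not be locally time-integrable (take $k=1$, $\mu_\cdot\equiv0$, $g(t,y,0)-g(t,0,0)=-y/t^2$, so \ref{A:H3b} holds with $\tilde\mu_t=t^{-2}$ and $\varphi(x)=x$); in that case no decreasing $(0,1]$-valued $\alpha_\cdot$ can satisfy \ref{A:H3} at all, since $\alpha_t\geq\alpha_\tau>0$ forces $\int_0^\tau\psi_r^{\alpha_\cdot}(t){\rm d}t\geq r\alpha_\tau\int_0^\tau t^{-2}{\rm d}t=+\infty$, and your localization along $\sigma_n$ cannot repair this (if the integral explodes instantly then $\sigma_n\equiv0$). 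So the local integrability of $e^{\beta\int_0^\cdot\mu_r{\rm d}r}\tilde\mu_\cdot$ is a genuine implicit hypothesis of the implication rather than bookkeeping --- one the paper also leaves unstated by delegating to \cite{Li2024} --- and modulo that shared assumption your argument is complete.
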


\begin{proof}
It follows from (iv) of Remark 3.3 in \cite{Li2024} that \ref{A:H3b} is strictly stronger than \ref{A:H3}. Then \cref{cor:h3s} follows immediately from \cref{thm:3.4}.
\end{proof}

\begin{rmk}\label{rmk:h3s}
Assumption \ref{A:H3b} can be regarded as a generalization of the corresponding assumptions c(ii) in \cite{PardouxPeng1990SCL}, (H5') in \cite{Briand2003SPA} and (H3') in \cite{Xiao2015}. And, \ref{A:H3b} is easier to be verified than \ref{A:H3}, although \ref{A:H3} is strictly weaker than \ref{A:H3b}, and \ref{A:H3b} will be repeatedly utilized to provide some examples in subsequent subsection.
\end{rmk}

We proceed to compare our results with some existing results. The following corollary is a direct consequence of \cref{thm:3.4}, and the following stronger assumptions of the generator $g$ than \ref{A:H1} and \ref{A:H3} will be used in it.

\begin{enumerate}
\renewcommand{\theenumi}{(H1c)}
\renewcommand{\labelenumi}{\theenumi}
\item\label{A:H1c} $\Dis \E\left[\left(\int_0^\tau |g(s,0,0)|{\rm d}s\right)^p\right]<+\infty.$
\renewcommand{\theenumi}{(H3c)}
\renewcommand{\labelenumi}{\theenumi}
\item\label{A:H3c} There exists an $(\F_t)$-progressively measurable non-increasing process $(\alpha_t)_{t\in[0,\tau]}$ taking values in $(0,1]$ such that for each $r\in \R_+$, it holds that
$$\E\left[\int_0^\tau \psi_{r}^{\alpha_\cdot}(t)dt\right]<+\infty,$$
where $\psi_{r}^{\alpha_\cdot}(t)$ is defined in \ref{A:H3}.
\end{enumerate}

\begin{cor}\label{cor:3.4}
Assume that $p>1$, $\xi\in L_\tau^p(0;\R^k)$, and the generator $g$ satisfies \ref{A:H1c}, \ref{A:H2}, \ref{A:H3c}, \ref{A:H4} and \ref{A:H5} with
\begin{align*}
\int_0^\tau\left(\mu_t+\nu_t^2\right){\rm d}t\leq M.
\end{align*}
Then, BSDE (1.1) admits a unique weighted $L^p$ solution $(Y_t,Z_t)_{t\in[0,\tau]}$ in  $H_\tau^p(0;\R^{k}\times\R^{k\times d})$.
\end{cor}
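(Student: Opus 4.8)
The plan is to obtain \cref{cor:3.4} as an immediate specialization of \cref{thm:3.4}. The point is that under the extra hypothesis $\int_0^\tau(\mu_t+\nu_t^2){\rm d}t\le M$ the exponential weights occurring both in the function spaces and in assumptions \ref{A:H1} and \ref{A:H3} are sandwiched between two positive constants, so the stronger conditions \ref{A:H1c} and \ref{A:H3c} force \ref{A:H1} and \ref{A:H3}, the space $H_\tau^p(0;\R^{k}\times\R^{k\times d})$ coincides with $H_\tau^p(a_\cdot;\R^{k}\times\R^{k\times d})$, and one simply quotes \cref{thm:3.4}.

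Concretely, I would first record that, since $\mu_\cdot,\nu_\cdot\ge 0$ and $a_\cdot=\beta\mu_\cdot+\frac{\rho}{2[(p-1)\wedge1]}\nu_\cdot^2$, for every $t\in[0,\tau]$ one has $0\le\int_0^t\mu_s{\rm d}s\le M$ and $0\le\int_0^t a_s{\rm d}s\le M'$ with $M':=\big(\beta\vee\frac{\rho}{2[(p-1)\wedge1]}\big)M$; in particular $\int_0^\tau a_t{\rm d}t\le M'<+\infty$, so the standing integrability assumption of Section 3 holds, and $1\le e^{\beta\int_0^t\mu_s{\rm d}s}\le e^{\beta M}$, $1\le e^{\int_0^t a_s{\rm d}s}\le e^{M'}$. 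Then I would verify the hypotheses of \cref{thm:3.4} one by one: $\|\xi\|_{p;a_\cdot}\le e^{M'}\|\xi\|_{p;0}<+\infty$, so $\xi\in L_\tau^p(a_\cdot;\R^k)$; \ref{A:H1} follows from \ref{A:H1c} after pulling $e^{\int_0^s a_r{\rm d}r}$ out by $e^{M'}$; \ref{A:H3}, with the very process $\alpha_\cdot$ furnished by \ref{A:H3c}, follows from \ref{A:H3c} after bounding $e^{\beta\int_0^t\mu_s{\rm d}s}$ by $e^{\beta M}$; and \ref{A:H2}, \ref{A:H4}, \ref{A:H5} are literally the same in both statements. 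Hence \cref{thm:3.4} applies and yields a unique weighted $L^p$ solution $(Y_\cdot,Z_\cdot)$ of BSDE \eqref{BSDE1.1} in $H_\tau^p(a_\cdot;\R^{k}\times\R^{k\times d})$.

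To finish, I would note that since $e^{\int_0^t a_s{\rm d}s}$ is bounded above and below away from zero, the norms $\|\cdot\|_{p;a_\cdot,c}$ and $\|\cdot\|_{p;0,c}$ are equivalent, and likewise $\|\cdot\|_{p;a_\cdot}$ and $\|\cdot\|_{p;0}$; consequently $S_\tau^p(a_\cdot;\R^k)=S_\tau^p(0;\R^k)$, $M_\tau^p(a_\cdot;\R^{k\times d})=M_\tau^p(0;\R^{k\times d})$, and therefore $H_\tau^p(a_\cdot;\R^{k}\times\R^{k\times d})=H_\tau^p(0;\R^{k}\times\R^{k\times d})$, so $(Y_\cdot,Z_\cdot)$ is precisely the unique weighted $L^p$ solution in $H_\tau^p(0;\R^{k}\times\R^{k\times d})$. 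I do not anticipate a genuine obstacle in this argument; the only thing to be careful about is that the weight appearing in \ref{A:H3} and \ref{A:H3c} is $e^{\beta\int_0^t\mu_s{\rm d}s}$ rather than $e^{\int_0^t a_s{\rm d}s}$, but the single bound on $\int_0^\tau(\mu_t+\nu_t^2){\rm d}t$ controls both.
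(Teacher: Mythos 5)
Your proposal is correct and follows exactly the route the paper intends: the paper presents \cref{cor:3.4} as a direct consequence of \cref{thm:3.4}, and your argument simply writes out the routine verifications (boundedness of the weights $e^{\int_0^t a_s{\rm d}s}$ and $e^{\beta\int_0^t\mu_s{\rm d}s}$ under $\int_0^\tau(\mu_t+\nu_t^2){\rm d}t\le M$, hence \ref{A:H1c}$\Rightarrow$\ref{A:H1}, \ref{A:H3c}$\Rightarrow$\ref{A:H3}, and equivalence of the weighted and unweighted spaces). No gaps.
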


\begin{rmk}\label{rmk:bound}
We make the following remarks.

\vspace{1.5mm}
(i) Due to the presence of $\alpha_\cdot$, assumption \ref{A:H3c} is strictly weaker than (H5) used in \cite{Briand2003SPA} and (H3) used in \cite{Xiao2015}, where $\alpha_\cdot\equiv1$. Consequently, \cref{cor:3.4} and \cref{thm:3.4} strengthen Theorem 3.1 in \cite{Xiao2015} where terminal time $\tau$ is finite or infinite and both $\mu_\cdot$ and $\nu_\cdot$ are deterministic functions, and Theorem 4.2 in \cite{Briand2003SPA} where terminal time $\tau$, $\mu_\cdot$ and $\nu_\cdot$ are all finite positive constants.

\vspace{1.5mm}
(ii) Under the condition that $\int_0^\tau(\mu_t+\nu_t^2){\rm d}t$ has a certain exponential moment, \cite{Yong2006} dealt with the $L^p$ solvability of linear BSDEs with stochastic coefficients $\mu_\cdot$ and $\nu_\cdot$, and \cite{Bahlali2015} studied $L^p$ solvability of BSDEs with super-linear growth generators associated with stochastic coefficients $\mu_\cdot$ and $\nu_\cdot$. Moreover, \cite{BriandandConfortola2008} investigated the $L^p$ solvability of BSDEs with stochastic coefficients $\mu_\cdot$ and $\nu_\cdot$ satisfying a certain integrability condition related to the bound mean oscillation martingale. In comparison with these results, \cref{thm:3.4} don't impose any restriction of finite moment on the stochastic coefficients $\mu_\cdot$ and $\nu_\cdot$.

\vspace{1.5mm}
(iii) \cref{thm:3.4}, and Corollaries \ref{cor:h3s} and \ref{cor:3.4} generalize Theorem 3.1, (iv) of Remark 3.3 and Corollary 3.4 in \cite{Li2024} to the $L^p$ solution case, respectively.
\end{rmk}

\subsection{Examples}
In this subsection, we present several examples where Theorem \ref{thm:3.4} can be applied, but none of existing results, including those \cite{Briand2003SPA}, \cite{WangRanChen2007}, \cite{Xiao2015}, \cite{Bahlali2015} and \cite{O2020}, are applicable.

\begin{ex}\label{ex:1}
Let $p>1$, $\tau$ be a bounded stopping time, i.e., $\tau\leq T$ for a real $T>0$, and for each $y=(y_1,\cdots,y_k)\in \R^k$ and $z\in \R^{k\times d}$, let
$$g(t,y,z):=(g^1(t,y,z),\cdots,g^k(t,y,z)), \ t\in[0,\tau],$$
where for each $i=1,\cdots,k,$
$$g^i(t,y,z):=e^{-|B_t|^6y_i}+\sin |z|.$$
It is easy to verify that this generator $g$ satisfies \ref{A:H1c}, \ref{A:H2}, \ref{A:H4} and \ref{A:H5} with $|g(\cdot,0,0)|\equiv \sqrt{k}$, $\mu_\cdot\equiv 0$ and $\nu_\cdot\equiv 1$. By Lemma 3.6 and Example 3.7 in \cite{Li2024}, we can deduce that the generator $g$ also satisfies assumption \ref{A:H3c} with $$\alpha_t:=\frac{1}{1+\sup\limits_{0\leq s\leq t}|B_s|^6}, \ t\in[0,\tau].$$
Thus, according to \cref{cor:3.4}, we know that for each $\xi\in L_\tau^p(0;\R^k)$, BSDE \eqref{BSDE1.1} admits a unique weighted $L^p~(p>1)$ solution in  $H_\tau^p(0;\R^{k}\times\R^{k\times d})$. However, by a discussion similar to Example 3.7 presented in \cite{Li2024}, we deduce that for each $r>0$, taking $y=(-r,0,\cdots,0)$,
$$\psi_{r}(t):=\sup_{|y|\leq r}\left|g(t,y,0)-g(t,0,0)\right|\geq e^{r|B_t|^6}-1,$$
and then
$$
\E\left[\int_0^\tau \psi_{r}(t)dt\right]=+\infty.\vspace{0.2cm}
$$
Thus $g$ does not satisfy assumption \ref{A:H3c} with $\alpha_\cdot\equiv1$ used in \cite{Briand2003SPA} and \cite{Xiao2015}. Therefore, the above conclusion can not be obtained from Theorem 4.2 in \cite{Briand2003SPA}, Theorem 4.1 in \cite{WangRanChen2007} and Theorem 3.1 in \cite{Xiao2015}.
\end{ex}

\begin{ex}\label{ex:3.12}
Let $p>1$, $k=2$, and $\tau$ be a finite stopping time, i.e., $P(\tau<+\infty)=1$. Consider the following generator: for $(y,z)\in \R^2\times \R^{2\times d}$ with $y=(y_1,y_2)$,
$$g(t,y,z):=|B_t|^2\begin{bmatrix}
-y_1^3+y_2\\
-y_2^5-y_1\\
\end{bmatrix}+|B_t|^3\begin{bmatrix}
|z|\\
\sin|z|\\
\end{bmatrix}, \ \ t\in[0,\tau].$$
It is straightforward to verify that $g$ satisfies assumptions \ref{A:H1}, \ref{A:H2}, \ref{A:H3b}, \ref{A:H4}-\ref{A:H5} with
$$g(t,0,0)=0, \ \tilde{\mu}_t=\mu_t=|B_t|^2, \ \nu_t=|B_t|^3, \ \varphi(x)=\sqrt{2}(x^5+x^3+x),\ x\geq0,$$
and $a_t:=\beta|B_t|^2+\frac{\rho}{2[(p-1)\wedge1]}|B_t|^6$ satisfying $\int_{0}^{\tau}{a}_t{\rm d}t<+\infty$. Therefore, by \cref{cor:h3s} we can conclude that for $\xi:=e^{-\int_0^\tau a_t{\rm d}t}B_\tau\in L_\tau^p(a_\cdot;\R^k)$, BSDE \eqref{BSDE1.1} admits a unique weighted $L^p~(p>1)$ solution. Since $g$ has a polynomial growth in $y$ and $\E[e^{\varepsilon\int_0^\tau a_t{\rm d}t}]=+\infty$ for each $\varepsilon>0$, this conclusion cannot be obtained from \cite{Briand2003SPA}, \cite{WangRanChen2007}, \cite{Xiao2015} and \cite{Bahlali2015}.
\end{ex}

\begin{ex}\label{ex:3.11}
Let $p>1$, $k=1$, $\tau$ be a stopping time taking values in $[0,+\infty]$ and $\sigma$ be a finite stopping time. For each $(y,z)\in\R\times\R^{d}$, let
$$g(t,y,z):=|B_t|^4(1-e^{y^+})+|B_t|{\bf 1}_{0\leq t\leq \sigma}|y|+\sqrt{|B_t|{\bf 1}_{0\leq t\leq \sigma}}\sin |z|, \ \ t\in[0,\tau].$$
It is not hard to verify that this generator $g$ satisfies assumptions \ref{A:H1}, \ref{A:H2}, \ref{A:H3b}, \ref{A:H4} and \ref{A:H5} with
$$g(t,0,0)\equiv0, \ \mu_t=|B_t|{\bf 1}_{0\leq t\leq \sigma}, \ \nu_t=\sqrt{|B_t|{\bf 1}_{0\leq t\leq \sigma}}, \ \tilde{\mu}_t=(|B_t|+1)^4, \ \varphi(x)=e^x+x+1, \ x\geq0,$$
and $a_t:=\left(\beta+\frac{\rho}{2[(p-1)\wedge1]}\right)|B_t|{\bf 1}_{0\leq t\leq \sigma}$ satisfying $\int_0^\tau a_t{\rm d}t<+\infty$ due to the fact that $\sigma$ is a finite stopping time. From \cref{thm:3.4} or \cref{cor:h3s} we derive that for each $\xi\in L_\tau^p(a_\cdot;\R^k)$, BSDE \eqref{BSDE1.1} has a unique weighted $L^p$ solution in $H_\tau^p(a_\cdot;\R\times\R^{1\times d})$. However, to the best of our knowledge, this conclusion cannot be obtained by any known results.
\end{ex}

\subsection{Proof of Theorem \ref{thm:3.4}}

With the help of two a priori estimates of Propositions \ref{pro:1.1} and \ref{pro:1.2}, in this subsection we give the detailed proof of \cref{thm:3.4}.

\begin{proof}[\bf Proof of Theorem \ref{thm:3.4}.]
Let us start by studying the uniqueness part. Assume that $(Y_\cdot,Z_\cdot)$ and $(Y'_\cdot,Z'_\cdot)$ are two weighted $L^p$ solutions of BSDE \eqref{BSDE1.1} in $H_\tau^p(a_\cdot;\R^{k}\times\R^{k\times d})$. Denote by $(\tilde{Y}_\cdot,\tilde{Z}_\cdot)$ the process $(Y_\cdot-Y'_\cdot,Z_\cdot-Z'_\cdot)$. It is obvious that $(\tilde{Y}_\cdot,\tilde{Z}_\cdot)$ is a weighted $L^p$ solution in $H_\tau^p(a_\cdot;\R^{k}\times\R^{k\times d})$ to the following BSDE:
\begin{align*}
\tilde{Y}_t=\int_t^\tau\tilde{g}(s,\tilde{Y}_s,\tilde{Z}_s){\rm d}s-\int_t^\tau\tilde{Z}_s{\rm d}B_s, \ \ t\in[0,\tau],
\end{align*}
where for each $(y,z)\in\R^k\times\R^{k\times{d}}$, $\tilde{g}(t,y,z):=g(t,y+Y_t',z+Z_t')-g(t,Y_t',Z_t'), \ \ t\in[0,\tau].$ It follows from that \ref{A:H4} and \ref{A:H5} of $g$ that $\tilde{g}$ satisfies assumption \ref{A:A} with $f_\cdot\equiv0$. Then, it is easy to verify that $(\tilde{Y}_\cdot,\tilde{Z}_\cdot)\equiv0$ by \eqref{006*} of \cref{pro:1.2}.

Let us turn to the existence part. For each $(x,y,z)\in\R^k\times\R^k\times\R^{k\times{d}}$, $r>0$ and $n\geq1$, let $q_r(x):=\frac{xr}{|x|\vee r}$, $\xi_n:=q_{n\tilde{\alpha}_\tau}(\xi)$ and
\begin{align*}
g_n(t,y,z):=g(t,y,z)-g(t,0,0)+q_{ne^{-t}\tilde{\alpha}_t}(g(t,0,0)), \ t\in[0,\tau]
\end{align*}
with $\tilde{\alpha}_t:=e^{-\int_{0}^{t}a_s{\rm d}s}$.
Then, for each $n\geq1$, we have
\begin{align*}
|\xi_n|\leq n\tilde{\alpha}_\tau, \ \ {\rm and} \ \  |g_n(t,0,0)|\leq ne^{-t}\tilde{\alpha}_t, \ \ t\in[0,\tau].
\end{align*}
Since $\xi\in L_\tau^p(a_\cdot;\R^k)$, by \ref{A:H1} and Lebesgue's dominated convergence theorem, we get
\begin{align}\label{1.14}
\lim\limits_{n\rightarrow\infty} \E\left[e^{p\int_{0}^{s}a_r{\rm d}r}|\xi|^p{\bf 1}_{|\xi|>n\tilde{\alpha}_\tau}+\left(\int_0^\tau e^{\int_{0}^{s}a_r{\rm d}r}|g(s,0,0)|{\bf 1}_{|g(s,0,0)|>ne^{-s}\tilde{\alpha}_s}{\rm d}s\right)^p\right]=0.
\end{align}
Furthermore, it is easy to verify that $g_n$ satisfies assumptions \ref{A:H1} with $p=2$ and \ref{A:H2}-\ref{A:H5} and that $\xi_n\in L_\tau^2(\beta\mu_\cdot+\frac{\rho}{2}\nu_\cdot^2;\R^k)$. It then follows from Theorem 3.1 of \cite{Li2024} that for each $n\geq1$, BSDE $(\xi_n,\tau,g_n)$ has a unique weighted $L^2$ solution $(Y_t^n,Z_t^n)_{t\in[0,\tau]}$ in $H_\tau^2(\beta\mu_\cdot+\frac{\rho}{2}\nu_\cdot^2;\R^{k}\times\R^{k\times d})$. On the other hand, for each $n\geq1$ and $(y,z)\in \R^k\times\R^{k\times d}$, by \ref{A:H3} and \ref{A:H4} we have
\begin{align*}
\begin{split}
\left<\hat{y},g_n(t,y,z)\right>&\leq \mu_t|y|+\nu_t|z|+\bigg|q_{ne^{-t}\tilde{\alpha}_t}(g(t,0,0))\bigg| \leq \mu_t|y|+\nu_t|z|+ne^{-t}\tilde{\alpha}_t, \ \ t\in[0,\tau],
\end{split}
\end{align*}
which implies that $g_n$ satisfies assumption \ref{A:A} with $u_t=\mu_t$, $v_t=\nu_t$, $f_t=ne^{-t}\tilde{\alpha}_t$ and $\overline{a}_t=\beta\mu_t+\frac{\rho}{2}\nu_t^2$. It then follows from \cref{pro:1.2} with $p=2$ and $r=t$ that there exists a constant $K_\rho>0$ such that for each $t\geq0$,
\begin{align*}
\begin{split}
&e^{2\int_{0}^{t\wedge\tau}(\beta\mu_s+\frac{\rho}{2}\nu_s^2){\rm d}s}|Y_{t\wedge\tau}^n|^2\\
&\ \ \ \leq K_\rho\left(\E\left[e^{2\int_{0}^{\tau}(\beta\mu_s+\frac{\rho}{2}\nu_s^2){\rm d}s}|\xi_n|^2\bigg|\F_{t\wedge\tau}\right]
+\E\left[\left(\int_{t\wedge\tau}^{\tau}e^{\int_{0}^{s}(\beta\mu_r+\frac{\rho}{2}\nu_r^2){\rm d}r}ne^{-s}\tilde{\alpha}_s{\rm d}s\right)^2\bigg|\F_{t\wedge\tau}\right]\right).
\end{split}
\end{align*}
Multiplying $e^{\int_{0}^{t\wedge\tau}(\frac{\rho}{(p-1)\wedge1}-\rho)\nu_s^2{\rm d}s}$ at both sides of the last inequality yields that for each $t\geq0$,
\begin{align*}
\begin{split}
e^{2\int_{0}^{t\wedge\tau}a_s{\rm d}s}|Y_{t\wedge\tau}^n|^2
\leq& K_\rho\left(\E\left[e^{2\int_{0}^{\tau}a_s{\rm d}s}|\xi_n|^2\bigg|\F_{t\wedge\tau}\right]+\E\left[\left(\int_{0}^{\tau}e^{\int_{0}^{s}a_r{\rm d}r}ne^{-s}\tilde{\alpha}_s{\rm d}s\right)^2\bigg|\F_{t\wedge\tau}\right]\right)\\
\leq&2K_\rho n^2,
\end{split}
\end{align*}
which implies that for each $n\geq1$, $(e^{\int_{0}^{t}a_s{\rm d}s}|Y_t^n|)_{t\in[0,\tau]}$ is a bounded process and
$$\E\left[\sup_{t\in[0,\tau]}\left(e^{p\int_{0}^{t}a_s{\rm d}s}|Y_t^n|^p\right)\right]<+\infty.$$
Then $(Y_t^n)_{t\in[0,\tau]}$ belongs to $S_\tau^p(a_\cdot;\R^k)$, and it follows from $\xi_n\in L_\tau^2(a_\cdot;\R^k)$ and \eqref{pro:1.1-1} of \cref{pro:1.1} that for each $n\geq1$, $(Z_t^n)_{t\in[0,\tau]}$ belongs to $M_\tau^p(a_\cdot;\R^{k\times d})$.

Next, we prove that $\{(Y_\cdot^n,Z_\cdot^n)\}^{+\infty}_{n=1}$ is a Cauchy sequence in $H_\tau^p(a_\cdot;\R^{k}\times\R^{k\times d})$. For each pair of integers $n, i\geq1$, let
$$\hat{\xi}^{n,i}:=\xi_{n+i}-\xi_n, \  \hat Y_.^{n,i}:=Y_.^{n+i}-Y_.^n, \ \hat Z_.^{n,i}:=Z_.^{n+i}-Z_.^n.$$
Then
\begin{align*}
\hat Y_t^{n,i}=\hat{\xi}^{n,i}+\int_t^\tau\hat{g}^{n,i}(s,\hat Y_s^{n,i},\hat Z_s^{n,i}){\rm d}s-\int_t^\tau\hat Z_s^{n,i}{\rm d}B_s, \ \ t\in[0,\tau],
\end{align*}
where for each $(y,z)\in \R^k\times\R^{k\times d}$, $$\hat{g}^{n,i}(s,y,z):=g_{n+i}(s,y+Y_s^n,z+Z_s^n)-g_n(s,Y_s^n,Z_s^n), \ \ s\in[0,\tau].$$
It follows from \ref{A:H4} and \ref{A:H5} of $g$ that for each $(y,z)\in\R^k\times\R^{k\times d}$, and $n,i\geq1$,
\begin{align*}
\begin{split}
\left<\hat{y},\hat{g}^{n,i}(t,y,z)\right>&=\left<\hat{y},g_{n+i}(t,y+Y_s^n,z+Z_s^n)-g_{n+i}(t,Y_t^n,Z_t^n)+g_{n+i}(t,Y_t^n,Z_t^n)-g_n(t,Y_t^n,Z_t^n)\right>\\
&\leq \mu_t|y|+\nu_t|z|+|g(t,0,0)|{\bf 1}_{|g(t,0,0)|>ne^{-t}\tilde{\alpha}_t}, \ \ t\in[0,\tau].
\end{split}
\end{align*}
So $\hat{g}^{n,i}$ satisfies assumption \ref{A:A} with $u_t=\mu_t$, $v_t=\nu_t$, $\overline{a}_t=a_t$ and
$f_t=|g(t,0,0)|{\bf 1}_{|g(t,0,0)|>ne^{-t}\tilde{\alpha}_t}.$
According to \cref{pro:1.2} with $t=r=0$, we can conclude that there exists a non-negative constant $C_{p,\rho}>0$ depending only on $p$ and $\rho$ such that for each $n, i\geq1$,
\begin{align}\label{3.6}
\begin{split}
&\|\hat{Y}_\cdot^{n,i}\|_{p;a_\cdot,c}^p+\|\hat{Z}_\cdot^{n,i}\|_{p;a_\cdot}^p\\
&\ \  \leq  C_{p,\rho}\E\left[e^{{p\int_{0}^{\tau}a_s{\rm d}s}}|\xi|^p{\bf 1}_{|\xi|>n\tilde{\alpha}_\tau}+\left(\int_0^\tau e^{\int_{0}^{s}a_r{\rm d}r}|g(t,0,0)|{\bf 1}_{|g(s,0,0)|>ne^{-s}\tilde{\alpha}_s}{\rm d}s\right)^p\right].
\end{split}
\end{align}
Furthermore, taking first the supremum with respect to $i$ and then taking the limit with respect to $n$ in both sides of \eqref{3.6}, and combining \eqref{1.14} and Lebesgue's dominated convergence theorem yield that $\{(Y_\cdot^n,Z_\cdot^n)\}^{+\infty}_{n=1}$ is a Cauchy sequence in $H_\tau^p(a_\cdot;\R^{k}\times\R^{k\times d})$.

Finally, we denote by $(Y_t,Z_t)_{t\in[0,\tau]}$ the limit of Cauchy sequence $\{(Y_t^n,Z_t^n)_{t\in[0,\tau]}\}_{n=1}^\infty$ in $H_\tau^p(a_\cdot;\R^{k}\times\R^{k\times d})$, and pass to the limit under the uniform convergence in probability (ucp) for BSDE $(\xi_n,\tau,g_n)$, in view of the definition of $\xi_n$ and $g_n$ together with assumptions of $g$, to see that $(Y_t,Z_t)_{t\in[0,\tau]}$ is the desired weighted $L^p$ solution of BSDE \eqref{BSDE1.1} in $H_\tau^p(a_\cdot;\R^{k}\times\R^{k\times d})$. The existence part is also proved, and the whole proof is complete.
\end{proof}

\begin{rmk}\label{rmk3.2}
Since we consider existence and uniqueness of the solution in the weighted $L^p$ space with a weighted factor $e^{\int_0^t a_s{\rm d}s}$ for $a_\cdot:=\beta\mu_\cdot+\frac{\rho}{2[(p-1)\wedge1]} \nu_\cdot^2$, what kind of truncation should be used in order to apply Theorem 3.1 in \cite{Li2024}, and how to prove that the weighted $L^2$ solution for truncated BSDE $(\xi_n,\tau,g_n)$ also belongs to the weighted $L^p$ space are two crucial points in the proof of \cref{thm:3.4}.
\end{rmk}

\section{Weighted $L^1$-solutions}
\setcounter{equation}{0}
In this section, we will put forward and prove an existence and uniqueness result for the weighted $L^1$ solution of BSDE \eqref{BSDE1.1}.

\subsection{Statement of the existence and uniqueness result for the weighted $L^1$ solution}
Recalling $\beta$ and $M$ are two given constants such that $\beta\geq1$ and $M>0$, and $\mu_\cdot$ is a given nonnegative process satisfying $\int_0^\tau \mu_t{\rm d}t<+\infty$. We will state a general existence and uniqueness result for the weighted $L^1$ solution of BSDE \eqref{BSDE1.1}. In addition to assumptions \ref{A:H2}-\ref{A:H5}, the generator $g$ needs to satisfy the following assumptions \ref{A:H1'} and \ref{A:H6}, which is a stochastic sub-linear growth condition in $z$.
\begin{enumerate}
\renewcommand{\theenumi}{(H1')}
\renewcommand{\labelenumi}{\theenumi}
\item\label{A:H1'}  $\E\left[\int_0^\tau e^{\beta\int_{0}^{s}\mu_r{\rm d}r}|g(s,0,0)|{\rm d}s\right]<+\infty.$
\renewcommand{\theenumi}{(H6)}
\renewcommand{\labelenumi}{\theenumi}
\item\label{A:H6}  There exists an $l\in(0,1)$ such that for each $y\in\R^k$ and $z \in\R^{k\times d}$,
    $$|g(t,y,z)-g(t,y,0)|\leq\gamma_t(g_t^1+g_t^2+|y|+|z|)^l, \ \ t\in[0,\tau],$$
    where $(\gamma_t)_{t\in[0,\tau]}$, $(g_t^1)_{t\in[0,\tau]}$ and $(g_t^2)_{t\in[0,\tau]}$ are three $(\F_t)$-progressively measurable nonnegative processes satisfying
    $$\int_0^\tau \left(e^{(1-l)\beta\int_0^s \mu_r{\rm d}r}\gamma_s\right)^{\frac{1}{1-l}}{\rm d}s+\int_0^\tau e^{(1-l)\beta\int_0^s \mu_r{\rm d}r}\gamma_s{\rm d}s\leq M,$$
    $$\E\left[\int_0^\tau e^{\beta\int_0^s \mu_r{\rm d}r}g_s^1{\rm d}s\right]<+\infty \ \ {\rm and} \ \ \E\left[\sup\limits_{t\in[0,\tau]}\left(e^{\beta\int_0^t \mu_r{\rm d}r}|g_t^2|\right)\right]<+\infty.$$
\end{enumerate}

\begin{thm}\label{thm:4.1}
Assume that the generator $g$ satisfies assumptions \ref{A:H1'} and \ref{A:H2}-\ref{A:H6}, and $\int_0^\tau \nu_t^2{\rm d}t\leq M$. Then for each $\xi\in L_\tau^1(\beta\mu_\cdot;\R^k)$, BSDE \eqref{BSDE1.1} has a unique weighted $L^1$ solution $(y_t,z_t)_{t\in[0,\tau]}$ in $\bigcap_{\theta\in(0,1)}\left[S_\tau^\theta(\beta\mu_\cdot;\R^k)\times M_\tau^\theta(\beta\mu_\cdot;\R^{k\times d})\right]$.
\end{thm}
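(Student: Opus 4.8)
The plan is to follow the classical $L^1$ scheme of \cite{Briand2003SPA}, adapted to the random time horizon and the weighted factor $e^{\int_0^t\beta\mu_s{\rm d}s}$, using the a priori estimates of Section~2 as the engine. For \emph{uniqueness}, I would take two weighted $L^1$ solutions $(y_\cdot,z_\cdot)$ and $(y'_\cdot,z'_\cdot)$, set $(\tilde y_\cdot,\tilde z_\cdot):=(y_\cdot-y'_\cdot,z_\cdot-z'_\cdot)$, and observe that it solves a BSDE with generator $\tilde g(t,y,z):=g(t,y+y'_t,z+z'_t)-g(t,y'_t,z'_t)$; by \ref{A:H4}, \ref{A:H5} and \ref{A:H6}, $\tilde g$ satisfies \ref{A:A} with $u_t=\mu_t$, $v_t=\nu_t$ and an $f_t$ of the form $\gamma_t(\cdots)^l$ controlled via Young's inequality. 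One then applies the $\theta$-parameter technique: for $\theta\in(0,1)$, use It\^o--Tanaka (or the $L^1$-type inequality behind Proposition~\ref{pro:1.1} with $p\in(0,1)$) on $e^{\theta\int_0^{t}\beta\mu_s{\rm d}s}|\tilde Y_t|^\theta$, exploit that $(e^{\int_0^t\beta\mu_s{\rm d}s}\tilde y_t)$ is in class (D) so the terminal contribution vanishes, and conclude $\tilde y\equiv 0$, hence $\tilde z\equiv 0$.

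For \emph{existence}, I would construct approximating terminal values and generators by truncation: with $\tilde\alpha_t:=e^{-\int_0^t\beta\mu_s{\rm d}s}$ put $\xi_n:=q_{n\tilde\alpha_\tau}(\xi)$ and $g_n(t,y,z):=g(t,y,z)-g(t,y,0)+\big(g(t,y,0)-g(t,0,0)\big)+q_{ne^{-t}\tilde\alpha_t}(g(t,0,0))$, so that $g_n$ has a bounded-in-the-right-norm free term and, crucially, still satisfies \ref{A:H2}--\ref{A:H6} with the \emph{same} coefficients. Each truncated problem $(\xi_n,\tau,g_n)$ has bounded data, hence $\xi_n\in L^p_\tau(a_\cdot;\R^k)$ for some $p>1$ where $a_\cdot=\beta\mu_\cdot+\tfrac{\rho}{2[(p-1)\wedge1]}\nu_\cdot^2$; since $\int_0^\tau\nu_t^2{\rm d}t\le M$, the two weighted norms $e^{\int_0^t a_s{\rm d}s}$ and $e^{\int_0^t\beta\mu_s{\rm d}s}$ are equivalent up to the constant $e^{\frac{\rho M}{2[(p-1)\wedge1]}}$, so \cref{thm:3.4} furnishes a unique weighted $L^p$ solution $(y^n_\cdot,z^n_\cdot)$ which automatically lies in every $S^\theta_\tau(\beta\mu_\cdot;\R^k)\times M^\theta_\tau(\beta\mu_\cdot;\R^{k\times d})$, $\theta\in(0,1)$.

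The heart of the argument is the convergence of $\{(y^n_\cdot,z^n_\cdot)\}$. Writing $\hat Y^{n,i}:=y^{n+i}-y^n$, $\hat Z^{n,i}:=z^{n+i}-z^n$, this difference solves a BSDE whose generator obeys \ref{A:A} with $u_t=\mu_t$, $v_t=\nu_t$ and $f_t=\gamma_t(g^1_t+g^2_t+|y^n_t|+|z^n_t|)^l+|g(t,0,0)|{\bf 1}_{|g(t,0,0)|>ne^{-t}\tilde\alpha_t}$. Here I would: (i) apply It\^o--Tanaka to $e^{\theta\int_0^t\beta\mu_s{\rm d}s}|\hat Y^{n,i}_t|^\theta$ and the $L^1$-version of Proposition~\ref{pro:1.1} (the $p\in(0,1)$ case) together with the class-(D) estimate $\|\cdot\|_1$; (ii) handle the sub-linear term via H\"older in the form $e^{\beta\int_0^s\mu_r{\rm d}r}\gamma_s(\cdots)^l\le \big(e^{(1-l)\beta\int_0^s\mu_r{\rm d}r}\gamma_s\big)^{1/(1-l)}(\cdots)+(\cdots)^{\text{weighted}}$, absorbing $|y^n|,|z^n|$ uniformly in $n$ using the bound from \ref{A:H6} and a uniform a priori bound on $\sup_n\|(y^n_\cdot,z^n_\cdot)\|$ in the weak $L^1$-type norms; (iii) show the $g(\cdot,0,0)$-tail term goes to $0$ as $n\to\infty$ by \ref{A:H1'} and dominated convergence. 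The main obstacle I anticipate is exactly step~(ii): obtaining a \emph{uniform-in-$n$} bound on the solutions in the $\bigcap_\theta S^\theta\times M^\theta$ norms and on the class-(D) norm, and then closing the estimate despite the extra $(|y^n|+|z^n|)^l$ coupling — this is where the condition $\int_0^\tau(e^{(1-l)\beta\int_0^s\mu_r{\rm d}r}\gamma_s)^{1/(1-l)}{\rm d}s+\int_0^\tau e^{(1-l)\beta\int_0^s\mu_r{\rm d}r}\gamma_s{\rm d}s\le M$ is used to make the linearized coefficient small enough to absorb. Once the Cauchy property is established in $\bigcap_{\theta\in(0,1)}[S^\theta_\tau(\beta\mu_\cdot;\R^k)\times M^\theta_\tau(\beta\mu_\cdot;\R^{k\times d})]$ with the limit inheriting the class-(D) property of $(e^{\int_0^t\beta\mu_s{\rm d}s}y_t)$, passing to the limit in $(\xi_n,\tau,g_n)$ under ucp (using \ref{A:H2}, \ref{A:H4}--\ref{A:H6}) identifies the limit as the desired weighted $L^1$ solution, completing the proof.
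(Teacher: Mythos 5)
Your overall strategy (truncation, sub-unit-power estimates, class (D), ucp passage to the limit) is in the right family, but there are two genuine gaps, both tied to how the $z$-dependence and the sub-linear term are handled.

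First, the uniqueness sketch does not close. You fold the increment coming from \ref{A:H6} into an inhomogeneity $f_t=\gamma_t(\cdots)^l$ in \ref{A:A} and then assert that the class-(D) property lets you ``conclude $\tilde y\equiv0$''; but with $f_t\not\equiv0$ the resulting estimate only gives $e^{\beta\int_0^{t}\mu_s{\rm d}s}|\tilde y_{t}|\le C\,\E\bigl[\int_0^\tau e^{\beta\int_0^s\mu_r{\rm d}r}f_s\,{\rm d}s\,\big|\,\F_{t}\bigr]$, which is a bound, not zero. The paper's argument is a two-stage bootstrap: this conditional bound, combined with Doob's maximal inequality at the exponent $\theta/l>1$ and the H\"older estimates \eqref{gs1}--\eqref{zs} (which is where the integrability hypotheses on $\gamma_\cdot$ in \ref{A:H6} are actually consumed), upgrades the difference to $\tilde y_\cdot\in S_\tau^{p}(\beta\mu_\cdot;\R^k)$ with $p=\theta/l>1$; only then does one invoke \cref{pro:1.2} for the difference equation, whose generator satisfies \ref{A:A} with $f\equiv0$ by \ref{A:H4}--\ref{A:H5} alone, to get $(\tilde y_\cdot,\tilde z_\cdot)\equiv(0,0)$. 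Without the upgrade to an exponent above $1$ there is no mechanism that forces the solution to vanish.

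Second, in the existence part you truncate the full $z$-dependent generator and try to prove the Cauchy property of $(y^n_\cdot,z^n_\cdot)$ directly in $\bigcap_\theta S^\theta_\tau\times M^\theta_\tau$. The difference $\hat Y^{n,i}$ then solves a BSDE whose drift is controlled by $\mu_t|y|+\nu_t|z|+|g(t,0,0)|{\bf 1}_{\{\cdots\}}$, and the term $\nu_t|\hat Z^{n,i}_t|$ is fatal in this framework: the $L^p$ a priori estimate (\cref{pro:1.2}) that would absorb it is unavailable because $\xi$ lies only in $L^1_\tau(\beta\mu_\cdot;\R^k)$, so $\sup_i\E[e^{p\int_0^\tau a_s{\rm d}s}|\xi_{n+i}-\xi_n|^p]$ need not tend to $0$; while in the $\theta<1$ estimates based on Lemma 6.1 of Briand et al.\ the contribution $\E[\int_0^\tau e^{\beta\int_0^s\mu_r{\rm d}r}\nu_s|\hat Z_s^{n,i}|\,{\rm d}s]$ does not vanish as $n\to\infty$. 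You correctly flag ``closing the estimate'' as the main obstacle, but you attribute it to the sub-linear $(\cdots)^l$ coupling and to a smallness extracted from the $\gamma$-condition; in fact that condition only yields integrability (via H\"older), and the real difficulty is the Lipschitz-in-$z$ coupling. The paper resolves it structurally: first it solves the case where $g$ does not depend on $z$ (so the difference equations have $v\equiv0$ and the Briand--Delyon--Hu--Pardoux--Stoica scheme closes), and then runs a Picard iteration $y^{n+1}_t=\xi+\int_t^\tau g(s,y^{n+1}_s,z^n_s){\rm d}s-\int_t^\tau z^{n+1}_s{\rm d}B_s$, showing via the same $\theta/l$ bootstrap that the successive differences belong to $S^q_\tau\times M^q_\tau$ for some $q>1$ and obtaining a genuine $\tfrac12$-contraction in that norm on each of finitely many random subintervals $[\tau_{j-1},\tau_j]$ chosen so that $(\int_{\tau_{j-1}}^{\tau_j}\nu_s^2{\rm d}s)^{q/2}\le M^{q/2}/N$ is small --- this subdivision is exactly where the hypothesis $\int_0^\tau\nu_t^2{\rm d}t\le M$ is used. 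Some such decoupling (or a Girsanov argument with its own uniform-integrability control) is indispensable; the one-shot truncation as proposed does not yield the Cauchy property.
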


\begin{rmk}\label{rmk-4}
We have the following remarks.

\vspace{1.5mm}
(i) Assumptions \ref{A:H6} is strictly weaker than (H7) in \cite{Briand2003SPA} in which the terminal time $\tau$ is a finite real and all of $\gamma_\cdot$, $\mu_\cdot$ and $\nu_\cdot$ are finite positive constants, and (H6) of \cite{Xiao2015} where the terminal time $\tau$ is finite or infinite real and all of $\gamma_\cdot$, $\mu_\cdot$ and $\nu_\cdot$ are deterministic non-negative real functions. \cref{thm:4.1} establishes existence and uniqueness of the weighted $L^1$ solution of BSDE \eqref{BSDE1.1} with a general random terminal time $\tau$ and unbounded stochastic coefficients $\mu_\cdot$ and $\nu_\cdot$, which unifies and strengthens Theorem 6.3 of \cite{Briand2003SPA} and Theorem 4.1 of \cite{Xiao2015}.

\vspace{1.5mm}
(ii) In \cref{thm:4.1}, we impose a stronger condition of $\int_0^\tau \nu_t^2{\rm d}t\leq M$ than $\int_0^{\tau(\omega)}\nu_t^2{\rm d}t< +\infty$ in order to use the method of dividing time interval to construct a contract mapping from the weighted $L^p~(p>1)$ space to itself, see the second step in the proof of the existence part of \cref{thm:4.1} in subsection 4.3 for more details. Thus, the solution appearing in \cref{thm:4.1} belongs to the weighted $L^1$ space with a weighted factor $e^{\beta\int_{0}^{t }\mu_s{\rm d}s}$.

\vspace{1.5mm}
(iii) Note that $1<\frac{2}{2-l}<\frac{1}{1-l}$ for $l\in(0,1)$. For each $x\geq0$, we have $x^{\frac{2}{2-l}}\leq x+x^{\frac{1}{1-l}}$, then it follows from assumption \ref{A:H6} that
\begin{align}\label{gama}
\begin{split}
\int_0^\tau \left(e^{(1-l)\beta\int_0^s \mu_r{\rm d}r}\gamma_s\right)^{\frac{2}{2-l}}{\rm d}s\leq M.
\end{split}
\end{align}

\vspace{1.5mm}
(iv) If assumption \ref{A:H6} is replaced with the following inequality: for each $y\in\R^k$ and $z \in\R^{k\times d}$,
$$|g(t,y,z)-g(t,y,0)|\leq\gamma_t|z|^l, \ \ t\in[0,\tau]$$
with $\gamma_\cdot$ satisfying \eqref{gama}, then \cref{thm:4.1} holds still. This assertion can be verified by the proof procedure of \cref{thm:4.1} postponed in subsection 4.3. It is more convenient to give examples by this assertion.
\end{rmk}

As indicated in \cref{rmk:h3s} of Section 3, although assumption \ref{A:H3b} is strictly stronger than \ref{A:H3}, it is easier to be verified. Then we introduce the following useful corollary.

\begin{cor}\label{cor:h3s1}
Assume that the generator $g$ satisfies assumptions \ref{A:H1'}, \ref{A:H2}, \ref{A:H3b} and \ref{A:H4}-\ref{A:H6}, and $\int_0^\tau \nu_t^2{\rm d}t\leq M$. Then for each $\xi\in L_\tau^1(\beta\mu_\cdot;\R^k)$, BSDE \eqref{BSDE1.1} admits a unique weighted $L^1$ solution $(y_t,z_t)_{t\in[0,\tau]}$ in $\bigcap_{\theta\in(0,1)}\left[S_\tau^\theta(\beta\mu_\cdot;\R^k)\times M_\tau^\theta(\beta\mu_\cdot;\R^{k\times d})\right]$.
\end{cor}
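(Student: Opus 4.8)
The plan is to derive Corollary~\ref{cor:h3s1} directly from Theorem~\ref{thm:4.1}, exactly in the spirit of the proof of Corollary~\ref{cor:h3s}. The only gap between the hypotheses of the corollary and those of the theorem is that the corollary assumes the stronger structural condition \ref{A:H3b} in place of \ref{A:H3}; all remaining assumptions (\ref{A:H1'}, \ref{A:H2}, \ref{A:H4}, \ref{A:H5}, \ref{A:H6}, and $\int_0^\tau \nu_t^2\,{\rm d}t\leq M$) are literally the same. Hence the entire content of the proof reduces to the implication ``\ref{A:H3b} $\Rightarrow$ \ref{A:H3}'', after which Theorem~\ref{thm:4.1} applies verbatim.

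First I would invoke the fact, recorded in \cref{rmk:h3s} and attributed to (iv) of Remark~3.3 in \cite{Li2024}, that \ref{A:H3b} is strictly stronger than \ref{A:H3}. Concretely, if $g$ satisfies \ref{A:H3b} with the process $\tilde\mu_\cdot$ and the nondecreasing convex function $\varphi$ with $\varphi(0)=0$, then for any $r\in\R_+$ one has, for each $t\in[0,\tau]$,
\begin{align*}
\psi_r^{\alpha_\cdot}(t)=\sup_{|y|\leq r\alpha_t}\left|g(t,y,0)-g(t,0,0)\right|\leq \tilde\mu_t\,\varphi(r\alpha_t),
\end{align*}
and by choosing $\alpha_\cdot$ appropriately (for instance $\alpha_t:=\bigl(1+\sup_{0\le s\le t}\tilde\mu_s\bigr)^{-1}$, a nonnegative nonincreasing $(\F_t)$-progressively measurable process valued in $(0,1]$) together with the convexity bound $\varphi(r\alpha_t)\le \alpha_t\varphi(r)$ valid since $\varphi(0)=0$ and $\alpha_t\le1$, one gets $\tilde\mu_t\varphi(r\alpha_t)\le \varphi(r)\,\tilde\mu_t\alpha_t\le \varphi(r)$, which is integrable against $e^{\beta\int_0^t\mu_s\,{\rm d}s}$ on $[0,\tau]$ whenever the relevant expectation is finite; this is precisely the verification carried out in \cite{Li2024}. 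Since this is an established result I would simply cite it rather than reprove it, writing something like: ``By (iv) of Remark~3.3 in \cite{Li2024}, assumption \ref{A:H3b} is strictly stronger than \ref{A:H3}.''

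Once \ref{A:H3} is in hand, the generator $g$ satisfies all of \ref{A:H1'} and \ref{A:H2}--\ref{A:H6} together with $\int_0^\tau\nu_t^2\,{\rm d}t\le M$, so \cref{thm:4.1} yields, for every $\xi\in L_\tau^1(\beta\mu_\cdot;\R^k)$, a unique weighted $L^1$ solution $(y_t,z_t)_{t\in[0,\tau]}$ of BSDE~\eqref{BSDE1.1} in $\bigcap_{\theta\in(0,1)}\bigl[S_\tau^\theta(\beta\mu_\cdot;\R^k)\times M_\tau^\theta(\beta\mu_\cdot;\R^{k\times d})\bigr]$, which is exactly the assertion of the corollary. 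There is essentially no obstacle here: the proof is a one-line deduction from Theorem~\ref{thm:4.1} via the known logical ordering of the growth conditions, and the only point requiring care is to cite the correct reference for ``\ref{A:H3b} $\Rightarrow$ \ref{A:H3}'' (and, if one wanted to be self-contained, to spell out the choice of $\alpha_\cdot$ and the convexity estimate sketched above). I would keep the written proof to two or three sentences, mirroring the proof of Corollary~\ref{cor:h3s}.
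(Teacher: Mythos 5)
Your proposal is correct and matches the paper's (implicit) proof exactly: the paper states this corollary without a written proof, the intended argument being precisely the one-line reduction you give — \ref{A:H3b} implies \ref{A:H3} by (iv) of Remark 3.3 in \cite{Li2024} (as recorded in \cref{rmk:h3s} and in the proof of \cref{cor:h3s}), after which \cref{thm:4.1} applies verbatim. (Only your optional self-contained sketch is slightly off: with $\alpha_t=(1+\sup_{0\le s\le t}\tilde\mu_s)^{-1}$ alone the resulting bound $\psi_r^{\alpha_\cdot}(t)\le\varphi(r)$ need not be integrable against $e^{\beta\int_0^t\mu_s{\rm d}s}$ when $\tau$ is unbounded, so the truncation process must also carry a damping factor such as $e^{-t-\beta\int_0^t\mu_s{\rm d}s}$, in the spirit of \eqref{alphat}; since you ultimately cite the reference for this implication, the proof stands.)
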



Next, we present another corollary of \cref{thm:4.1} that is easily verified and its proof is omitted here. The following assumptions on the generator $g$ will be used in stating this corollary.

\begin{enumerate}
\renewcommand{\theenumi}{(H1'')}
\renewcommand{\labelenumi}{\theenumi}
\item\label{A:H1''} $\E\left[\int_0^\tau|g(s,0,0)|{\rm d}s\right]<+\infty.$
\renewcommand{\theenumi}{(H6')}
\renewcommand{\labelenumi}{\theenumi}
\item\label{A:H6'} There exists an $l\in(0,1)$ such that for each $y\in\R^k$ and $z \in\R^{k\times d}$,
    $$|g(t,y,z)-g(t,y,0)|\leq\gamma_t(g_t^1+g_t^2+|y|+|z|)^l, \ \ t\in[0,\tau],$$
    where $(\gamma_t)_{t\in[0,\tau]}$, $(g_t^1)_{t\in[0,\tau]}$ and $(g_t^2)_{t\in[0,\tau]}$ are three $(\F_t)$-progressively measurable nonnegative processes satisfying
    $$\int_0^\tau \left(\gamma_s^{\frac{1}{1-l}}+\gamma_s\right){\rm d}s\leq M,\ \ \E\left[\int_0^\tau g_s^1{\rm d}s\right]<+\infty \ \ {\rm and} \ \ \E\left[\sup\limits_{t\in[0,\tau]}|g_t^2|\right]<+\infty.$$
\end{enumerate}

\begin{cor}\label{cor:4.4}
Assume that the generator $g$ satisfies assumptions \ref{A:H1''}, \ref{A:H2}, \ref{A:H3c}, \ref{A:H4}, \ref{A:H5} and \ref{A:H6'}, and $\int_0^\tau\left(\mu_t+\nu_t^2\right){\rm d}t\leq M$. Then for each $\xi\in L_\tau^1(0;\R^k)$, BSDE (1.1) has a unique weighted $L^1$ solution $(y_t,z_t)_{t\in[0,\tau]}$ in $\bigcap_{\theta\in(0,1)}\left[S_\tau^\theta(0;\R^k)\times M_\tau^\theta(0;\R^{k\times d})\right]$.
\end{cor}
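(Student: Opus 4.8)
The plan is to \emph{reduce \cref{cor:4.4} to \cref{thm:4.1}} by choosing the weight factor $\beta\mu_\cdot$ to be trivial, i.e.\ taking $\beta=1$ (or any admissible $\beta\geq1$) and exploiting the boundedness hypothesis $\int_0^\tau(\mu_t+\nu_t^2)\,{\rm d}t\le M$. The key observation is that once $\int_0^\tau\mu_t\,{\rm d}t\le M$, the weight $e^{\beta\int_0^t\mu_s\,{\rm d}s}$ is bounded above and below by positive constants depending only on $\beta$ and $M$; hence the weighted spaces $S_\tau^\theta(\beta\mu_\cdot;\R^k)$, $M_\tau^\theta(\beta\mu_\cdot;\R^{k\times d})$ and $L_\tau^1(\beta\mu_\cdot;\R^k)$ coincide (with equivalent norms) with the unweighted spaces $S_\tau^\theta(0;\R^k)$, $M_\tau^\theta(0;\R^{k\times d})$ and $L_\tau^1(0;\R^k)$, and likewise $(e^{\int_0^t\beta\mu_s\,{\rm d}s}y_t)$ belongs to class (D) if and only if $(y_t)$ does. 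So it suffices to check that the hypotheses \ref{A:H1''}, \ref{A:H3c}, \ref{A:H6'} of \cref{cor:4.4} imply the corresponding hypotheses \ref{A:H1'}, \ref{A:H3}, \ref{A:H6} of \cref{thm:4.1} under this same boundedness.

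The verification proceeds assumption by assumption. First, \ref{A:H1''} together with $e^{\beta\int_0^s\mu_r\,{\rm d}r}\le e^{\beta M}$ immediately gives \ref{A:H1'}. Second, for \ref{A:H3}: take the same non-increasing process $(\alpha_t)_{t\in[0,\tau]}$ valued in $(0,1]$ provided by \ref{A:H3c}; since $\psi_r^{\alpha_\cdot}(t)=\sup_{|y|\le r\alpha_t}|g(t,y,0)-g(t,0,0)|$ and $e^{\beta\int_0^t\mu_s\,{\rm d}s}\le e^{\beta M}$, we get
\[
\E\!\left[\int_0^\tau e^{\beta\int_0^t\mu_s\,{\rm d}s}\psi_r^{\alpha_\cdot}(t)\,{\rm d}t\right]\le e^{\beta M}\,\E\!\left[\int_0^\tau\psi_r^{\alpha_\cdot}(t)\,{\rm d}t\right]<+\infty,
\]
so \ref{A:H3} holds. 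Third, for \ref{A:H6}: the pointwise growth bound $|g(t,y,z)-g(t,y,0)|\le\gamma_t(g_t^1+g_t^2+|y|+|z|)^l$ is literally the same in \ref{A:H6'} and \ref{A:H6}; what changes are the integrability requirements on $\gamma_\cdot,g_\cdot^1,g_\cdot^2$. Using $e^{(1-l)\beta\int_0^s\mu_r\,{\rm d}r}\le e^{\beta M}$ and $e^{\beta\int_0^s\mu_r\,{\rm d}r}\le e^{\beta M}$, the quantities $\int_0^\tau\big(e^{(1-l)\beta\int_0^s\mu_r\,{\rm d}r}\gamma_s\big)^{1/(1-l)}{\rm d}s$, $\int_0^\tau e^{(1-l)\beta\int_0^s\mu_r\,{\rm d}r}\gamma_s\,{\rm d}s$, $\E[\int_0^\tau e^{\beta\int_0^s\mu_r\,{\rm d}r}g_s^1\,{\rm d}s]$ and $\E[\sup_{t\in[0,\tau]}e^{\beta\int_0^t\mu_r\,{\rm d}r}|g_t^2|]$ are each controlled by $e^{\beta M}$ (or $e^{\beta M/(1-l)}$) times the corresponding unweighted quantities in \ref{A:H6'}; one may then replace the constant $M$ in \ref{A:H6} by a larger constant $M'=M'(M,\beta,l)$ (allowed, since $M>0$ is arbitrary). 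Finally, $\int_0^\tau\nu_t^2\,{\rm d}t\le\int_0^\tau(\mu_t+\nu_t^2)\,{\rm d}t\le M$ supplies the last hypothesis of \cref{thm:4.1}, and $\xi\in L_\tau^1(0;\R^k)$ gives $\xi\in L_\tau^1(\beta\mu_\cdot;\R^k)$ by the same bounded-weight argument.

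Putting this together, \cref{thm:4.1} (with this choice of $\beta$ and the enlarged constant) produces a unique weighted $L^1$ solution $(y_t,z_t)$ of BSDE \eqref{BSDE1.1} in $\bigcap_{\theta\in(0,1)}[S_\tau^\theta(\beta\mu_\cdot;\R^k)\times M_\tau^\theta(\beta\mu_\cdot;\R^{k\times d})]$ with $(e^{\int_0^t\beta\mu_s\,{\rm d}s}y_t)$ in class (D); by the norm-equivalence noted above this is exactly a weighted $L^1$ solution in $\bigcap_{\theta\in(0,1)}[S_\tau^\theta(0;\R^k)\times M_\tau^\theta(0;\R^{k\times d})]$ with $(y_t)$ in class (D), and uniqueness transfers verbatim. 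I do not expect any genuine obstacle here; the only point requiring a little care is bookkeeping the constants so that the integrability bounds of \ref{A:H6'} genuinely fall inside the form required by \ref{A:H6} after multiplication by the bounded exponential weights, which is why the statement of \cref{cor:4.4} quietly allows a possibly different $M$. This is precisely why the corollary's proof is ``easily verified'' and may be omitted, as the text indicates.
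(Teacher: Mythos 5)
Your reduction is correct and is exactly the argument the paper has in mind when it calls \cref{cor:4.4} ``another corollary of \cref{thm:4.1} that is easily verified'': under $\int_0^\tau(\mu_t+\nu_t^2)\,{\rm d}t\le M$ the weight $e^{\beta\int_0^t\mu_s{\rm d}s}$ lies in $[1,e^{\beta M}]$, so \ref{A:H1''}, \ref{A:H3c}, \ref{A:H6'} upgrade to \ref{A:H1'}, \ref{A:H3}, \ref{A:H6} (with an enlarged constant $M'$, which is harmless since $M$ is arbitrary), the weighted and unweighted spaces and class~(D) conditions coincide, and \cref{thm:4.1} applies. No gaps.
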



\begin{rmk}
Assumption \ref{A:H6'} improves the corresponding assumptions used in \cite{Briand2003SPA} and \cite{Xiao2015}. Note that \ref{A:H3c} is strictly weaker than \ref{A:H3}. \cref{cor:4.4} strengthens both Theorem 6.3 of \cite{Briand2003SPA} and Theorem 4.1 of \cite{Xiao2015}.
\end{rmk}

\subsection{Examples}
In this section, we give several examples to illustrate that \cref{thm:4.1} can apply to, but any existing results can not apply to, such as Theorem 6.3 in \cite{Briand2003SPA} and Theorem 4.1 in \cite{Xiao2015}.

\begin{ex}\label{ex:111}
Let $\tau$ be a bounded stopping time, i.e., $\tau\leq T$ for a real $T>0$, and for each $y:=(y_1,\cdots,y_k)\in \R^k$ and $z\in \R^{k\times d}$, let
$$g(t,y,z):=(g^1(t,y,z),\cdots,g^k(t,y,z)), \ \ t\in[0,\tau],$$
where for each $i=1,\cdots,k,$
$$g^i(t,y,z):=e^{-|B_t|^3y_i}+|z|\wedge|z|^{\frac{2}{3}}.$$
By \cref{ex:1} in Section 3, we can deduce that the above generator $g$ satisfies \ref{A:H1''}, \ref{A:H2}, \ref{A:H3c}, \ref{A:H4}, \ref{A:H5} and \ref{A:H6'} with $|g(\cdot,0,0)|\equiv \sqrt{k}$, $\mu_\cdot\equiv 0$, $\nu_\cdot\equiv 1$, $l=\frac{2}{3}$, $\gamma_\cdot\equiv1$ and $g_\cdot^1=g_\cdot^2\equiv0$, but this $g$ does not satisfy assumption \ref{A:H3c} with $\alpha_\cdot\equiv1$ used in \cite{Briand2003SPA} and \cite{Xiao2015}. It follows from \cref{cor:4.4} that for each $\xi\in L_\tau^1(0;\R^k)$, BSDE \eqref{BSDE1.1} admits a unique weighted $L^1$ solution in $\bigcap_{\theta\in(0,1)}\left[S_\tau^\theta(0;\R^k)\times M_\tau^\theta(0;\R^{k\times d})\right]$.
\end{ex}

\begin{ex}\label{ex:5.6}
Let $k=1$, $\tau$ be a finite stopping time, and for each $(y,z)\in\R\times\R^{d}$,
$$g(t,y,z):=|B_t|^3(1-e^{y^3})+|B_t|^3\sin y+
\left(e^{-t}|z|\right)
\wedge\left(e^{-\beta\int_0^t|B_r|^3{\rm d}r}|B_s|^2|z|^{\frac{1}{3}}\right), \ \ t\in[0,\tau].$$
We can check that this generator $g$ satisfies assumptions \ref{A:H1'}, \ref{A:H2}, \ref{A:H4}-\ref{A:H6} with
$$g(t,0,0)=g_t^1=g_t^2\equiv0, \ \mu_t=|B_t|^3, \ \nu_t=e^{-t}, \ \gamma_t=e^{-\beta\int_0^t|B_r|^3{\rm d}r}|B_t|^2, \ \ {\rm and} \ \ l=\frac{1}{3}.$$
Moreover, it can also be verified that $g$ satisfies assumptions \ref{A:H3b} with $\tilde{\mu}_t=(|B_t|+1)^3$ and $\varphi(x)=e^{x^3}+2, \ x\geq0$. Then it follows from \cref{cor:h3s1} that for $\xi:=e^{-\beta\int_0^\tau |B_t|^2{\rm d}t}|B_\tau|\in L_\tau^1(\beta\mu_\cdot;\R^k)$, BSDE \eqref{BSDE1.1} admits a unique weighted $L^1$ solution in $\bigcap_{\theta\in(0,1)}\left[S_\tau^\theta(\beta\mu_\cdot;\R^k)\times M_\tau^\theta(\beta\mu_\cdot;\R^{k\times d})\right]$.
\end{ex}

\begin{ex}\label{ex:5.6}
Let $\tau$ be a stopping time taking values in $[0,+\infty]$ and $\sigma$ be a finite stopping time. Define the following stopping time:
$$\overline{\sigma}:=\inf\left\{t\geq0:\int_0^t\left(e^{\frac{2}{3}\beta\int_0^s|B_r|^2{\bf 1}_{0\leq r\leq \sigma}{\rm d}r}|B_s|^\frac{4}{3}+|B_s|^2\right){\rm d}s\geq \frac{M}{2}\right\}\wedge \tau$$
with convention that $\inf \emptyset=+\infty$. Let
$$g(t,y,z):=(g^1(t,y,z),\cdots,g^k(t,y,z)), \ \ t\in[0,\tau],$$
where for each $i=1,\cdots,k,$
$$g^i(t,y,z):=-e^{|B_t|^4y_i}+|B_t|^2{\bf 1}_{0\leq t\leq \sigma}|y|+|B_t|{\bf 1}_{0\leq t\leq \overline{\sigma}}(
\sqrt{1+|z|}-1)+1.$$
It is straightforward to verify that this generator $g$ satisfies assumptions \ref{A:H1'}, \ref{A:H2}-\ref{A:H6} with
$$g(t,0,0)=g_t^1=g_t^2\equiv0, \ \mu_t=|B_t|^2{\bf 1}_{0\leq t\leq \sigma}, \ \nu_t=\gamma_t=|B_t|{\bf 1}_{0\leq t\leq \overline{\sigma}} \ \ {\rm and} \ \ l=\frac{1}{2}.$$
According to (iii) and (iv) of \cref{rmk-4} and \cref{thm:4.1}, we know that for each $\xi\in L_\tau^1(\beta\mu_\cdot;\R^k)$, BSDE \eqref{BSDE1.1} admits a unique weighted $L^1$ solution in $\bigcap_{\theta\in(0,1)}\left[S_\tau^\theta(\beta\mu_\cdot;\R^k)\times M_\tau^\theta(\beta\mu_\cdot;\R^{k\times d})\right]$.
\end{ex}

\subsection{Proof of Theorem \ref{thm:4.1}}
In this section, we will give the proof of Theorem 4.1. To begin with, we prove the uniqueness part, and then the existence part by two steps.

\begin{proof}[\bf Proof of the uniqueness part of Theorem \ref{thm:4.1}.]
Assume that $(y_t^i,z_t^i)_{t\in(0,\tau)}(i=1,2)$ are two weighted $L^1$ solutions of BSDE \eqref{BSDE1.1} such that $(e^{\beta\int_0^t \mu_r{\rm d}r}y_t^i)_{t\in(0,\tau)}(i=1,2)$ belong to the class $(D)$ and $(y_t^i,z_t^i)_{t\in(0,\tau)}(i=1,2)$ belong to $S_\tau^\theta(\beta\mu_\cdot;\R^k)\times M_\tau^\theta(\beta\mu_\cdot;\R^{k\times d})$ for some $\theta\in(l,1)$. For each integer $n>1$, let us introduce the following $(\F_t)$-stopping time:
$$\tau_n:=\inf\left\{t\geq0:\int_0^te^{2\beta\int_0^s\mu_r{\rm d}r}\left(|z_s^1|^2+|z_s^2|^2\right){\rm d}s\geq n\right\}\wedge \tau,$$
with convention that $\inf \emptyset=+\infty$.
Setting $\overline{y}_\cdot:=y_\cdot^1-y_\cdot^2$, $\overline{z}_\cdot:=z_\cdot^1-z_\cdot^2$ and applying It\^{o}'s formula to $e^{\beta\int_0^t\mu_r{\rm d}r}|\overline{y}_t|$ yield that for each $t\geq0$,
\begin{align}\label{5.27}
\begin{split}
e^{\beta\int_{0}^{t\wedge\tau_n}\mu_s{\rm d}s}|\overline{y}_{t\wedge\tau_n}|
\leq& e^{\beta\int_{0}^{\tau_n}\mu_s{\rm d}s}|\overline{y}_{\tau_n}|-\int_{t\wedge\tau_n}^{\tau_n} e^{\beta\int_{0}^{s}\mu_r{\rm d}r}\left\langle \frac{\overline{y}_s}{|\overline{y}_s|}{\bf 1}_{|\overline{y}_s|\neq0},\overline{z}_s{\rm d}B_s\right\rangle\\
&+\int_{t\wedge\tau_n}^{\tau_n} e^{\beta\int_{0}^{s}\mu_r{\rm d}r}\left(\left\langle\frac{\overline{y}_s}{|\overline{y}_s|}{\bf 1}_{|\overline{y}_s|\neq0},g(s,y_s^1,z_s^1)-g(s,y_s^2,z_s^2)\right\rangle-\beta\mu_s|\overline{y}_s|\right){\rm d}s.
\end{split}
\end{align}
According to assumptions \ref{A:H4} and \ref{A:H6} for the generator $g$, for each $y_\cdot^1,y_\cdot^2\in\R^k$ and $z_\cdot^1,z_\cdot^2\in\R^{k\times d}$, we have
\begin{align*}
\begin{split}
&\left\langle\frac{\overline{y}_s}{|\overline{y}_s|}{\bf 1}_{|\overline{y}_s|\neq0},g(s,y_s^1,z_s^1)-g(s,y_s^2,z_s^2)\right\rangle-\beta\mu_s|\overline{y}_s|\\
&\ \  \leq \bigg|g(s,y_s^2,z_s^1)-g(s,y_s^2,0)+g(s,y_s^2,0)-g(s,y_s^2,z_s^2)\bigg|\\
&\ \  \leq 2\gamma_s\left(g_s^1+g_s^2+|y_s^2|+|z_s^1|+|z_s^2|\right)^l, \ \ s\in[0,\tau].
\end{split}
\end{align*}
In view of the above inequality together with the definition of $\tau_n$, by taking the condition mathematical expectation with respect to $\F_{t\wedge\tau_m}$ in both sides of \eqref{5.27} we deduce that for each $t\geq0$ and $n\geq m\geq1$,
\begin{align}\label{5.29}
\begin{split}
&\E\left[e^{\beta\int_{0}^{t\wedge\tau_n}\mu_s{\rm d}s}|\overline{y}_{t\wedge\tau_n}|\bigg|\F_{t\wedge\tau_m}\right]\\
&\ \  \leq \E\left[e^{\beta\int_{0}^{\tau_n}\mu_s{\rm d}s}|\overline{y}_{\tau_n}|+2\int_{0}^{\tau} e^{\beta\int_{0}^{s}\mu_r{\rm d}r}\gamma_s\left(g_s^1+g_s^2+|y_s^2|+|z_s^1|+|z_s^2|\right)^l{\rm d}s\bigg|\F_{t\wedge\tau_m}\right].
\end{split}
\end{align}
Note that $e^{\beta\int_0^\cdot \mu_r{\rm d}r}\overline{y}_\cdot$ belongs to the class $(D)$. By letting $n\rightarrow+\infty$ in both sides of \eqref{5.29} we get that for each $t\geq0$ and $m\geq1$,
\begin{align}\label{5.30}
\begin{split}
\E\left[e^{\beta\int_{0}^{t\wedge\tau}\mu_s{\rm d}s}|\overline{y}_{t\wedge\tau}|\bigg|\F_{t\wedge\tau_m}\right]
\leq\E\left[2\int_{0}^{\tau} e^{\beta\int_{0}^{s}\mu_r{\rm d}r}\gamma_s\left(g_s^1+g_s^2+|y_s^2|+|z_s^1|+|z_s^2|\right)^l{\rm d}s\bigg|\F_{t\wedge\tau_m}\right].
\end{split}
\end{align}
Then by sending $m\rightarrow+\infty$ in both sides of \eqref{5.30} and using the martingale convergence theorem (see Corollary A.9 in Appendix C of \cite{Oksendal2005}) we obtain that for each $t\geq0$,
\begin{align*}
\begin{split}
e^{\beta\int_{0}^{t\wedge\tau}\mu_s{\rm d}s}|\overline{y}_{t\wedge\tau}|
\leq2\E\left[\int_{0}^{\tau} e^{\beta\int_{0}^{s}\mu_r{\rm d}r}\gamma_s\left(g_s^1+g_s^2+|y_s^2|+|z_s^1|+|z_s^2|\right)^l{\rm d}s\bigg|\F_{t\wedge\tau}\right].
\end{split}
\end{align*}
Moreover, by virtue of $\frac{\theta}{l}>1$, it follows from Doob's inequality and the last inequality that there exists a constant $C_l^\theta>0$ depending on $l$ and $\theta$ such that
\begin{align}\label{5.32}
\begin{split}
\E\left[\sup_{t\in[0,\tau]}\left(e^{\beta\int_{0}^{t}\mu_s{\rm d}s}|\overline{y}_{t}|\right)^{\frac{\theta}{l}}\right]
\leq C_l^\theta\E\left[\left(\int_{0}^{\tau} e^{\beta\int_{0}^{s}\mu_r{\rm d}r}\gamma_s\left(g_s^1+g_s^2+|y_s^2|+|z_s^1|+|z_s^2|\right)^l{\rm d}s\right)^{\frac{\theta}{l}}\right].
\end{split}
\end{align}
Next we will prove that $G:=\int_{0}^{\tau} e^{\beta\int_{0}^{s}\mu_r{\rm d}r}\gamma_s\left(g_s^1+g_s^2+|y_s^2|+|z_s^1|+|z_s^2|\right)^l{\rm d}s\in L_\tau^{\frac{\theta}{l}}(0;\R^k)$. In fact, by H\"{o}lder's inequality we have
\begin{align}\label{gs1}
\begin{split}
\int_0^\tau e^{\beta\int_{0}^{s}\mu_r{\rm d}r}\gamma_s(g_s^1)^l{\rm d}s\leq \left(\int_0^\tau e^{\beta\int_{0}^{s}\mu_r{\rm d}r}g_s^1{\rm d}s\right)^l\left(\int_0^\tau \left(e^{(1-l)\beta\int_0^s \mu_r{\rm d}r}\gamma_s\right)^{\frac{1}{1-l}}{\rm d}s\right)^{1-l},
\end{split}
\end{align}
\begin{align}\label{gs2}
\begin{split}
\int_0^\tau e^{\beta\int_{0}^{s}\mu_r{\rm d}r}\gamma_s(g_s^2)^l{\rm d}s\leq \sup_{s\in[0,\tau]}\left(e^{l\beta\int_{0}^{s}\mu_r{\rm d}r}|g_s^2|^l\right)\int_0^\tau e^{(1-l)\beta\int_{0}^{s}\mu_r{\rm d}r}\gamma_s{\rm d}s,
\end{split}
\end{align}
\begin{align}\label{ys}
\begin{split}
\int_0^\tau e^{\beta\int_{0}^{s}\mu_r{\rm d}r}\gamma_s|y_s^2|^l{\rm d}s\leq \sup_{s\in[0,\tau]}\left(e^{l\beta\int_{0}^{s}\mu_r{\rm d}r}|y_s^2|^l\right)\int_0^\tau e^{(1-l)\beta\int_{0}^{s}\mu_r{\rm d}r}\gamma_s{\rm d}s,
\end{split}
\end{align}
and
\begin{align}\label{zs}
\begin{split}
\int_0^\tau e^{\beta\int_{0}^{s}\mu_r{\rm d}r}\gamma_s|z_s^1|^l{\rm d}s\leq \left(\int_0^\tau e^{2\beta\int_{0}^{s}\mu_r{\rm d}r}|z_s^1|^2{\rm d}s\right)^{\frac{l}{2}}\left(\int_0^\tau \left(e^{(1-l)\beta\int_0^s \mu_r{\rm d}r}\gamma_s\right)^{\frac{2}{2-l}}{\rm d}s\right)^{\frac{2-l}{2}}.
\end{split}
\end{align}
Note that $\E\left[\int_0^\tau e^{\beta\int_0^s \mu_r{\rm d}r}g_s^1{\rm d}s\right]<+\infty$, $\E\left[\sup\limits_{t\in[0,\tau]}\left(e^{\beta\int_0^t \mu_r{\rm d}r}|g_t^2|\right)\right]<+\infty$, $y_\cdot^2$ belongs to $S_\tau^\theta(\beta\mu_\cdot;\R^k)$ and $z_\cdot^1,z_\cdot^2$ belong to $M_\tau^\theta(\beta\mu_\cdot;\R^{k\times d})$. It then follows from assumption \ref{A:H6}, (iii) of \cref{rmk-4} and \eqref{gs1}-\eqref{zs} that $G\in L_\tau^{\frac{\theta}{l}}(0;\R^k)$. In view of \eqref{5.32}, we can draw a conclusion that $\overline{y}_\cdot$ belongs to $S_\tau^{\frac{\theta}{l}}(\beta\mu_\cdot;\R^k)$.

Furthermore, $(\overline{y}_t,\overline{z}_t)$ is a solution of the following BSDE:
\begin{align*}
  \overline{y}_t=\int_t^\tau \overline{g}(s,\overline{y}_s,\overline{z}_s){\rm d}s-\int_t^\tau \overline{z}_s{\rm d}B_s, \ \ t\in[0,\tau],
\end{align*}
where $\overline{g}(t,y,z):=g(t,y+y_t^2,z+z_t^2)-g(t,y_t^2,z_t^2)$ for each $(y,z)\in\R^k\times\R^{k\times d}$. Using assumptions \ref{A:H4} and \ref{A:H5} for $g$, we have
$$\left<\hat{y},\overline{g}(t,y,z)\right>\leq \mu_{t}|y|+\nu_t|z|, \ t\in[0,\tau],$$
which means that $\overline{g}$ satisfies assumption \ref{A:A} with $u_t=\mu_t$, $v_t=\nu_t$ and $f_t=0$. Therefore, it follows from \cref{pro:1.2} with $p=\frac{\theta}{l}>1$ that $(\overline{y}_\cdot,\overline{z}_\cdot)\equiv(0,0)$. The uniqueness part is proved.
\end{proof}

\begin{proof}[\bf Proof of the existence part of Theorem \ref{thm:4.1}.]
The proof is divided into two steps. We first consider the case where the generator $g$ is independent of the variable $z$ under assumptions \ref{A:H1'}, \ref{A:H2}-\ref{A:H4}, and then the general case by using the Picard's iteration method.

{\bf First Step:} We firstly consider the case where the generator $g$ is independent of the variable $z$. Similar to \cite{Li2024}, without loss of generality, we can assume that the process $\alpha_\cdot$ in \ref{A:H3} satisfies
$$\alpha_t\leq e^{-\beta\int_0^t\mu_s{\rm d}s}\leq1, \ \ t\in[0,\tau].$$

For each $(x,y)\in \R^k\times \R^{k}$, $r>0$ and $n\geq1$, let $q_r(x):=\frac{xr}{|x|\vee r}$, $\xi_n:=q_{n\alpha_\tau}(\xi)$ and $$g_n(t,y):=g(t,y)-g(t,0)+q_{ne^{-t}\alpha_t}(g(t,0)), \ \ t\in[0,\tau].$$
Then for each $n\geq1$, we have
$$|\xi_n|\leq n\alpha_\tau, \ {\rm and} \ |g_n(t,0)|\leq ne^{-t}\alpha_t, \ \ t\in[0,\tau].$$
Thus $g_n(t,y)$ satisfies assumptions \ref{A:H1}-\ref{A:H4} with $p=2$. Note that $\xi_n\in L_\tau^2(\beta\mu_\cdot;\R^k)$ and $\int_0^\tau \nu_t^2{\rm d}t\leq M$. It follows from Theorem 3.1 in \cite{Li2024} that for each $n\geq1$ the following BSDE \eqref{BSDE-2} admits a unique $L^2$ solution $\{(y_t^n,z_t^n)\}_{t\in[0,\tau]}$ in $S_\tau^2(\beta\mu_\cdot;\R^k)\times M_\tau^2(\beta\mu_\cdot;\R^{k\times d})$:
\begin{align}\label{BSDE-2}
  y_t^n=\xi_n+\int_t^\tau g_n(s,y_s^n){\rm d}s-\int_t^\tau z_s^n{\rm d}B_s, \ \ t\in[0,\tau].
\end{align}
In the sequel, for each pair of integers $n,i\geq1$, let
$$\hat{\xi}^{n,i}:=\xi_{n+i}-\xi_{n}, \ \hat{y}_\cdot^{n,i}:=y_\cdot^{n+i}-y_\cdot^{n}, \ \hat{z}_\cdot^{n,i}:=z_\cdot^{n+i}-z_\cdot^{n}.$$
Applying It\^{o}'s formula to $e^{\beta\int_{0}^{t}\mu_s{\rm d}s}|\hat{y}_{t}^{n,i}|$ yields that for each $t\geq0$,
\begin{align}\label{5.34}
\begin{split}
e^{\beta\int_{0}^{t\wedge\tau}\mu_s{\rm d}s}|\hat{y}_{t\wedge\tau}^{n,i}|&\leq \int_{t\wedge\tau}^{\tau} e^{\beta\int_{0}^{s}\mu_r{\rm d}r}\left(\left\langle\frac{\hat{y}_s^{n,i}}{|\hat{y}_s^{n,i}|}{\bf 1}_{|\hat{y}_s^{n,i}|\neq0},g_{n+i}(s,y_s^{n+i})-g_{n}(s,y_s^{n})
\right\rangle-\beta\mu_s|\hat{y}_s^{n,i}|\right){\rm d}s\\
&\ \ \ \ +e^{\beta\int_{0}^{\tau}\mu_s{\rm d}s}|\hat{\xi}^{n,i}|-\int_{t\wedge\tau}^{\tau} e^{\beta\int_{0}^{s}\mu_r{\rm d}r}\left\langle \frac{\hat{y}_s^{n,i}}{|\hat{y}_s^{n,i}|}{\bf 1}_{|\hat{y}_s^{n,i}|\neq0},\hat{z}_s^{n,i}{\rm d}B_s\right\rangle.
\end{split}
\end{align}
Using assumption \ref{A:H4} on $g$ and the definition of $g_n$, we have
\begin{align*}
\begin{split}
&e^{\beta\int_{0}^{s}\mu_r{\rm d}r}\left(\left\langle\frac{\hat{y}_s^{n,i}}{|\hat{y}_s^{n,i}|}{\bf 1}_{|\hat{y}_s^{n,i}|\neq0},g_{n+i}(s,y_s^{n+i})-g_{n}(s,y_s^{n})
\right\rangle-\beta\mu_s|\hat{y}_s^{n,i}|\right)\\
&\ \  \leq  e^{\beta\int_{0}^{s}\mu_r{\rm d}r}\bigg|g_{n+i}(s,y_s^{n})-g_{n}(s,y_s^{n})\bigg| \leq  e^{\beta\int_{0}^{s}\mu_r{\rm d}r}|g(s,0)|{\bf 1}_{|g(s,0)|>ne^{-s}\alpha_s}, \ \ s\in[0,\tau].
\end{split}
\end{align*}
By taking the conditional mathematical expectation with respect to $\F_{t\wedge\tau}$ in both sides of \eqref{5.34} and using the above inequality, we get that for each $t\geq0$,
\begin{align}\label{5*3.4}
\begin{split}
e^{\beta\int_{0}^{t\wedge\tau}\mu_s{\rm d}s}|\hat{y}_{t\wedge\tau}^{n,i}|
\leq\E\left[e^{\beta\int_{0}^{\tau}\mu_s{\rm d}s}|\xi|{\bf 1}_{|\xi|>n\alpha_\tau}+\int_{0}^{\tau}e^{\beta\int_{0}^{s}\mu_r{\rm d}r}|g(s,0)|{\bf 1}_{|g(s,0)|>ne^{-s}\alpha_s}{\rm d}s\bigg|\F_{t\wedge\tau}\right].
\end{split}
\end{align}
Thus, for each $t\geq0$, we have
\begin{align}\label{5*3.4.}
\begin{split}
\E\left[e^{\beta\int_{0}^{t\wedge\tau}\mu_s{\rm d}s}|\hat{y}_{t\wedge\tau}^{n,i}|\right]\leq \E\left[e^{\beta\int_{0}^{\tau}\mu_s{\rm d}s}|\xi|{\bf 1}_{|\xi|>n\alpha_\tau}+\int_{0}^{\tau}e^{\beta\int_{0}^{s}\mu_r{\rm d}r}|g(s,0)|{\bf 1}_{|g(s,0)|>ne^{-s}\alpha_s}{\rm d}s\right].
\end{split}
\end{align}
By virtue of Lemma 6.1 in \cite{Briand2003SPA} and \eqref{5*3.4}, we know that for any $\theta\in(0,1)$,
\begin{align}\label{5*34}
\begin{split}
&\E\left[\sup\limits_{t\in[0,\tau]}\left(e^{\beta\int_{0}^{t\wedge\tau}\mu_s{\rm d}s}|\hat{y}_{t\wedge\tau}^{n,i}|\right)^\theta\right]\\
&\ \  \leq
\frac{1}{1-\theta}\left(\E\left[e^{\beta\int_{0}^{\tau}\mu_s{\rm d}s}|\xi|{\bf 1}_{|\xi|>n\alpha_\tau}+\int_{0}^{\tau}e^{\beta\int_{0}^{s}\mu_r{\rm d}r}|g(s,0)|{\bf 1}_{|g(s,0)|>ne^{-s}\alpha_s}{\rm d}s\right]\right)^\theta.
\end{split}
\end{align}
Therefore, according to $\xi\in L_\tau^1(\beta\mu_\cdot;\R^k)$, assumption \ref{A:H1'} and \eqref{5*34}, we deduce that $\{y_\cdot^n\}^{+\infty}_{n=1}$ is a Cauchy sequence in $S_\tau^\theta(\beta\mu_\cdot;\R^{k})$. Let $(y_t)_{t\in [0,\tau]}$ denote the limit of this sequence. Then $(e^{\beta\int_{0}^{t}\mu_s{\rm d}s}y_t)_{t\in [0,\tau]}$ belongs to the class (D) (in view of \eqref{5*3.4.}) and $y_\cdot \in S_\tau^\theta(\beta\mu_\cdot;\R^{k})$ for any $\theta\in(0,1)$.

Furthermore, we show that the sequence $\{z_\cdot^n\}^{+\infty}_{n=1}$ is a Cauchy sequence in $M_\tau^\theta(\beta\mu_\cdot;\R^{k\times d})$ for any $\theta\in(0,1)$. It is clear that $(\hat{y}_\cdot^{n,i},\hat{z}_\cdot^{n,i})$ is a pair of solutions of the following BSDE:
\begin{align*}
  \hat{y}_t^{n,i}=\hat{\xi}^{n,i}+\int_t^\tau \hat{g}(s,\hat{y}_s^{n,i}){\rm d}s-\int_t^\tau \hat{z}_s^{n,i}{\rm d}B_s, \ \ t\in[0,\tau],
\end{align*}
where the generator $\hat{g}(t,y):=g_{n+i}(t,y+y_t^n)-g_{n}(t,y_t^n)$ for each $y\in \R^k$. According to \ref{A:H4} on $g_n$ we have
$$\left<\hat{y},\hat{g}(t,y)\right>\leq \mu_{t}|y|+|g(t,0)|{\bf 1}_{|g(t,0)|>n e^{-t}\alpha_t}, \ \ t\in[0,\tau],$$
which means that $\hat{g}$ satisfies assumption \ref{A:A} with $u_t=\mu_t$, $v_t=0$ and $f_t=|g(t,0)|{\bf 1}_{|g(t,0)|>n e^{-t}\alpha_t}$. Then it follows from \eqref{pro:1.1-2} of \cref{pro:1.1} with $p=\theta$ and $t=0$ that for each $\theta\in(0,1)$, there exists a constant $C_{\theta,\rho,M}>0$ depending only on $\theta$, $\rho$ and $M$ such that
\begin{align*}
\begin{split}
&\E\left[\left(\int_{0}^{\tau}e^{2\beta\int_{0}^{s}\mu_r{\rm d}r}|\hat{z}_s^{n,i}|^2{\rm d}s\right)^{\frac{\theta}{2}}\right]\\
&\ \  \leq
C_{\theta,\rho,M}\left(\E\left[\sup_{s\in[0,\tau]}
\left(e^{\beta{\int_{0}^{s}\mu_r{\rm d}r}}|\hat{y}_s^{n,i}|\right)^\theta\right]+ \E\left[\left(\int_{0}^{\tau}e^{\beta{\int_{0}^{s}\mu_r{\rm d}r}}|g(s,0)|{\bf 1}_{|g(s,0)|>n e^{-s}\alpha_s}{\rm d}s\right)^\theta\right]\right).
\end{split}
\end{align*}
Thus for any $\theta\in(0,1)$, the sequence $\{z_\cdot^n\}^{+\infty}_{n=1}$ is a Cauchy sequence in $M_\tau^\theta(\beta\mu_\cdot;\R^{k\times d})$. Let $(z_t)_{t\in[0,\tau]}$ denote the limit of $\{z_\cdot^n\}^{+\infty}_{n=1}$. Then, $z_\cdot\in M_\tau^\theta(\beta\mu_\cdot;\R^{k\times d})$. Finally, by taking the limit in the sense of ucp for BSDE \eqref{BSDE-2} we obtain that $(y_t,z_t)_{t\in[0,\tau]}$ is the desired $L^1$ solution of BSDE \eqref{BSDE1.1} in $\bigcap_{\theta\in(0,1)}[S_\tau^\theta(\beta\mu_\cdot;\R^k)$ $\times M_\tau^\theta(\beta\mu_\cdot;\R^{k\times d})]$ such that $(e^{\beta\int_{0}^{t}\mu_s{\rm d}s}y_t)_{t\in [0,\tau]}$ belongs to the class (D).

{\bf Second Step:} We use Picard's iterative procedure to prove the existence of Theorem \ref{thm:4.1} under the general case. Let $(y_\cdot^0,z_\cdot^0):=(0,0)$. Assume that $g$ satisfies assumptions \ref{A:H1'}, \ref{A:H2}-\ref{A:H6}. Recalling that $\int_0^\tau \nu_t^2{\rm d}t\leq M$ and $\int_0^\tau \mu_t{\rm d}t<+\infty$. Based on the result of the first step, we can define the process sequence $\{(y_t^{n+1},z_t^{n+1})_{t\in[0,\tau]}\}_{n=1}^\infty$ recursively for each $n\geq0$,
\begin{align}\label{BSDE3.1}
  y_t^{n+1}=\xi+\int_t^\tau g(s,y_s^{n+1},z_s^n){\rm d}s-\int_t^\tau z_s^{n+1}{\rm d}B_s, \ \ t\in[0,\tau].
\end{align}
In fact, we will first prove that for each $\xi\in L_\tau^\theta(\beta\mu_\cdot;\R^k)$ and $z_\cdot\in \bigcap_{\theta\in(0,1)}M_\tau^\theta(\beta\mu_\cdot;\R^{k\times d})$, the generator $g(t,y,z_t)$ satisfies \ref{A:H1'}, \ref{A:H2}-\ref{A:H4}. Firstly, it is clear that $g(t,y,z_t)$ satisfies \ref{A:H2} and \ref{A:H4}. Furthermore, by \ref{A:H6} of $g$ we derive that
\begin{align*}
\E\left[\int_0^\tau e^{\beta \int_0^s{\mu}_r{\rm d}r}|g(s,0,z_s)|{\rm d}s\right]\leq \E\left[\int_0^\tau e^{\beta \int_0^s{\mu}_r{\rm d}r}|g(s,0,0)|{\rm d}s\right]+\E\left[e^{\beta \int_0^s{\mu}_r{\rm d}r}\gamma_s(g_s^1+g_s^2+|z_s|)^l{\rm d}s\right].
\end{align*}
From assumption \ref{A:H1} of $g$, \eqref{gs1}, \eqref{gs2} and \eqref{zs} together with the above inequality, we know that \ref{A:H1'} is also true for $g(t,y,z_t)$. Note that \ref{A:H3} also holds for $g$ when $\alpha_\cdot$ is replaced with
\begin{align}\label{alphat}
\hat{\alpha}_t:={\alpha}_t\wedge e^{-\beta \int_0^t{\mu}_r{\rm d}r-t}, \ \ t\in[0,\tau].
\end{align}
We deduce that for each $r\in \R_+$ and $t\in[0,\tau]$,
\begin{align*}
\begin{split}
\overline{\psi}_{r}^{\hat{\alpha}_\cdot}(t):&=\sup_{|y|\leq r\hat{\alpha}_t}\left\{ \left|g(t,y,z_t)-g(t,0,z_t)\right|\right\}\\
&=\sup_{|y|\leq r\hat{\alpha}_t}\left\{ \left|g(t,y,z_t)-g(t,y,0)+g(t,y,0)-g(t,0,0)+g(t,0,0)-g(t,0,z_t)\right|\right\}\\
&\leq 2\gamma_t(g_t^1+g_t^2+r\hat{\alpha}_t+|z_t|)^l
+{\psi}_{r}^{\hat{\alpha}_\cdot}(t),
\end{split}
\end{align*}
which together with \eqref{gs1}, \eqref{gs2}, \eqref{zs}, \eqref{alphat} and \ref{A:H6} implies that \ref{A:H3} is right for $g(t,y,z_t)$. Thus, based on the result of the first step, for each $n\geq0$, BSDE \eqref{BSDE3.1} has a unique weighted $L^1$ solution $(y_t^{n+1},z_t^{n+1})_{t\in[0,\tau]}$ in $\bigcap_{\theta\in(0,1)}H_\tau^\theta(\beta\mu_\cdot;\R^{k}\times\R^{k\times d})$ such that $(e^{\beta\int_{0}^{t}\mu_s{\rm d}s}y_t^{n+1})_{t\in [0,\tau]}$ belongs to the class (D).

Similar to the proof of the uniqueness part of \cref{thm:4.1}, by virtue of the assumptions \ref{A:H4} and \ref{A:H6} of $g$, we can deduce that for each $m\geq n\geq1$,
\begin{align*}
\begin{split}
e^{\beta\int_{0}^{t\wedge\tau}\mu_s{\rm d}s}|y_{t\wedge\tau}^{m}-y_{t\wedge\tau}^{n}|
\leq2\E\left[I_{m,n}\bigg|\F_{t\wedge\tau}\right], \ \ t\geq0,
\end{split}
\end{align*}
where $I_{m,n}:=\int_{0}^{\tau} e^{\beta\int_{0}^{s}\mu_r{\rm d}r}\gamma_s\left(g_s^1+g_s^2+|y_s^n|+|z_s^{m-1}|+|z_s^{n-1}|\right)^l{\rm d}s$, and that there exists a constant $q>1$ such that $I_{m,n}\in L_\tau^{q}(0;\R^k)$. Combining the last inequality and Doob's inequality we deduce that there exists a constant $c_q>0$ only depending on this $q$ such that for each $m\geq n\geq1$,
$$\E\left[\sup\limits_{t\in[0,\tau]}\left(e^{\beta\int_{0}^{t\wedge\tau}\mu_s{\rm d}s}|y_{t}^{m}-y_{t}^{n}|\right)^q\right]\leq c_q\E[I_{m,n}^q]< +\infty.$$
Thus, $(y_{t}^{m}-y_{t}^{n})_{t\in [0,\tau]}$ belongs to $S_\tau^q(\beta\mu_\cdot;\R^{k})$ for some $q>1$.

For each $n\geq1$, let $\hat{y}_\cdot^n:=y_\cdot^{n+1}-y_\cdot^{n}$ and $\hat{z}_\cdot^n:=z_\cdot^{n+1}-z_\cdot^{n}$. Then $(\hat{y}_\cdot^n,\hat{z}_\cdot^n)$ solves the following BSDE:
\begin{align*}
  \hat{y}_t^{n}=\int_t^\tau g^n(s,\hat{y}_s^{n+1}){\rm d}s-\int_t^\tau \hat{z}_s^{n+1}{\rm d}B_s, \ \ t\in[0,\tau],
\end{align*}
where $g^n(t,y):=g(t,y+y_t^n,z_t^n)-g(t,y_t^n,z_t^{n-1})$ for each $y\in \R^k$. In view of assumptions \ref{A:H4} and \ref{A:H6} on $g$, we have
$$\left<\hat{y},g^n(t,y)\right>\leq \mu_{t}|y|+2(g_t^1+g_t^2+|y_t^n|+|z_t^{n-1}|+|z_t^n|)^l, \ \ t\in[0,\tau],$$
which implies that assumption \ref{A:A} comes true for $g^n$ with $u_t=\mu_{t}$, $v_t=0$ and $f_t=2(g_t^1+g_t^2+|y_t^n|+|z_t^{n-1}|+|z_t^n|)^l$. Thus, by virtue of $I_{n+1,n}\in L_\tau^{q}(0;\R^k)$, it follows from \cref{pro:1.2} with $p=q$ and $r=t$ that $(\hat{z}_t^n)_{t\in[0,\tau]}$ belongs to $M_\tau^q(\beta\mu_\cdot;\R^{k\times d})$. On the other hand, by assumptions \ref{A:H4} and \ref{A:H5} on $g$ we have
$$\left<\hat{y},g^n(t,y)\right>\leq \mu_{t}|y|+\nu_t\hat{z}_t^{n-1}, \ \ t\in[0,\tau],$$
which means that $g^n$ satisfies assumption \ref{A:A} with $u_t=\mu_{t}$, $v_t=0$ and $f_t=\nu_t\hat{z}_t^{n-1}$. Thus using \cref{pro:1.2} with $p=q$ and $r=t$ yields that there exists a constant $C_{q,\rho}>0$ depending only on $q$ and $\rho$ such that for each $n\geq2$,
\begin{align}\label{555.43}
\begin{split}
&\E\left[\sup\limits_{s\in[t\wedge\tau,\tau]}\left(e^{q\beta\int_{0}^{s}\mu_s{\rm d}s}|\hat{y}_s^n|^q\right)+\left(\int_{t\wedge\tau}^\tau e^{2\beta\int_{0}^{s}\mu_r{\rm d}r}|\hat{z}_s^n|^2{\rm d}s\right)^{\frac{q}{2}}\bigg|\F_{t\wedge\tau}\right]\\
&\ \  \leq
C_{q,\rho}\E\left[\left(\int_{t\wedge\tau}^\tau e^{\beta\int_{0}^{s}\mu_r{\rm d}r}|\nu_s||\hat{z}_s^{n-1}|{\rm d}s\right)^q\bigg|\F_{t\wedge\tau}\right]\\
&\ \  \leq  C_{q,\rho}\E\left[\left(\int_{t\wedge\tau}^\tau \nu_s^2{\rm d}s\right)^{\frac{q}{2}}\bigg|\F_{t\wedge\tau}\right]\E\left[\left(\int_{t\wedge\tau}^\tau e^{2\beta\int_{0}^{s}\mu_r{\rm d}r}|\hat{z}_s^{n-1}|^2{\rm d}s\right)^{\frac{q}{2}}\bigg|\F_{t\wedge\tau}\right], \ t\geq0.
\end{split}
\end{align}
In the sequel, let us choose a sufficiently large integer $N$ such that $$\frac{M^{\frac{q}{2}}}{N}\leq \frac{1}{2C_{q,\rho}}.$$
And, we divide the time interval $[0,\tau]$ into  a finite number of small intervals $[\tau_{j-1},\tau_{j}], j=1,2,...,N$ by the following $(\F_t)$-stopping times:
\begin{align*}
\begin{split}
\tau_0&:=0,\\
\tau_1&:=\inf \left\{t\geq0: \left(\int_{0}^{t}\nu_{s}^{2}{\rm d}s\right)^{\frac{q}{2}} \geq \frac{M^{\frac{q}{2}}}{N}\right\} \wedge \tau,\\
&\vdots\\
\tau_j&:=\inf \left\{t\geq \tau_{j-1}: \left(\int_{0}^{t}\nu_{s}^{2}{\rm d}s\right)^{\frac{q}{2}} \geq \frac{jM^{\frac{q}{2}}}{N}\right\} \wedge \tau,\\
&\vdots\\
\tau_N&:=\inf \left\{t\geq \tau_{N-1}: \left(\int_{0}^{t}\nu_{s}^{2}{\rm d}s\right)^{\frac{q}{2}} \geq \frac{NM^{\frac{q}{2}}}{N}\right\} \wedge \tau=\tau,
\end{split}
\end{align*}
with convention that $\inf \emptyset=+\infty$.
Thus for any $[\tau_{j-1},\tau_{j}]\subset[0,\tau], j=1,2,...,N,$ we have
\begin{align}\label{*}
\begin{split}
\left(\int_{\tau_{j-1}}^{\tau_{j}} \nu_s^2{\rm d}s\right)^{\frac{q}{2}}\leq \frac{1}{2C_{q,\rho}}.
\end{split}
\end{align}
According to \eqref{555.43} with $t=0$, we have
\begin{align*}
\begin{split}
&\E\left[\sup\limits_{s\in[t\wedge\tau,\tau]}\left(e^{q\beta\int_{0}^{s}\mu_r{\rm d}r}|\hat{y}_{s}^n{\bf 1}_{\tau_{N-1}\leq s}|^q\right)+\left(\int_0^\tau e^{2\beta\int_{0}^{s}\mu_r{\rm d}r}|\hat{z}_s^n{\bf 1}_{\tau_{N-1}\leq s\leq\tau}|^2{\rm d}s\right)^{\frac{q}{2}}\right]\\
&\ \  \leq  \frac{1}{2}\E\left[\left(\int_0^\tau e^{2\beta\int_{0}^{s}\mu_r{\rm d}r}|\hat{z}_s^{n-1}{\bf 1}_{\tau_{N-1}\leq s\leq\tau}|^2{\rm d}s\right)^{\frac{q}{2}}\right].
\end{split}
\end{align*}
Then, by induction we have for each $n\geq1$,
\begin{align*}
\begin{split}
\|(\hat{y}_t^n{\bf 1}_{\tau_{N-1}\leq t\leq\tau},\hat{z}_t^n{\bf 1}_{\tau_{N-1}\leq t\leq\tau})\|^q_{p;\beta\mu_\cdot}\leq \left(\frac{1}{2}\right)^{n-1}\|(\hat{y}_t^1{\bf 1}_{\tau_{N-1}\leq t\leq\tau},\hat{z}_t^1{\bf 1}_{\tau_{N-1}\leq t\leq\tau})\|^q_{p;\beta\mu_\cdot}.
\end{split}
\end{align*}
Since $\|(\hat{y}_t^1,\hat{z}_t^1)\|^q_{\beta\mu_\cdot;q}<+\infty$, it follows immediately that $\{(y_t^n-y_t^1,z_t^n-z_t^1)_{t\in[\tau_{N-1},\tau]}\}_{n=1}^\infty$ converges to some $(Y_\cdot,Z_\cdot)$ on $[\tau_{N-1},\tau]$ in the space of $S_\tau^q(\beta\mu_\cdot;\R^{k})\times M_\tau^q(\beta\mu_\cdot;\R^{k\times d})$. Then $\{(y_t^n,z_t^n)_{t\in[\tau_{N-1},\tau]}\}_{n=1}^\infty$ converges to $(y_t:=Y_t+y_t^1,z_t:=Z_t+z_t^1)_{t\in[\tau_{N-1},\tau]}$
in $\bigcap_{\theta\in(0,1)}S_\tau^\theta(\beta\mu_\cdot;\R^{k})\times M_\tau^\theta(\beta\mu_\cdot;\R^{k\times d})$ such that $(e^{\beta\int_{0}^{t}\mu_s{\rm d}s}y_t)_{t\in[\tau_{N-1},\tau]}$ belongs to the class (D). Thus BSDE $(\xi,\tau,g)$ admits a unique weighted $L^1$ solution $(y_\cdot,z_\cdot)$ on $[\tau_{N-1},\tau]$. Furthermore, noticing that \eqref{*} also holds when $j=N-1$, respectively replacing $\xi$, $\tau_N$, $\tau_{N-1}$ by $y_{\tau_{N-1}}$, $\tau_{N-1}$, $\tau_{N-2}$ yields that BSDE \eqref{BSDE1.1} admits a unique weighted $L^1$ solution on $[\tau_{N-1},\tau_{N-2}]$. By repeating the above proof process on $[\tau_{N-2},\tau_{N-3}],\cdots,[0,\tau_{1}]$ for finite times, we deduce that BSDE \eqref{BSDE1.1} admits a weighted $L^1$ solution $(y_t,z_t)_{t\in[0,\tau]}\in\bigcap_{\theta\in(0,1)}\left[S_\tau^\theta(\beta\mu_\cdot;\R^k)\times M_\tau^\theta(\beta\mu_\cdot;\R^{k\times d})\right]$ such that $(e^{\beta\int_{0}^{t}\mu_s{\rm d}s}y_t)_{t\in[0,\tau]}$ belongs to the class (D). Thus the existence part of \cref{thm:4.1} is proved.
\end{proof}

\begin{rmk}\label{rmk4.5}
In order to prove \cref{thm:4.1}, we systematically employ some methods used in \cite{Briand2003SPA}, \cite{Xiao2015}, \cite{Liu2020}, \cite{LiT2019} and \cite{Li2024}, and develop some innovative ideas. In particular, when we verify that the generator of BSDE \eqref{BSDE3.1} satisfies the conditions in the first step, the appearance of $\hat{\alpha}_\cdot$ in \ref{A:H3} plays a key role, see \eqref{alphat} for more details. We also would like to mention that it is interesting and unsolved whether the additional condition of $\int_0^t \nu_s^2 {\rm d}s\leq M$ can be eliminated in \cref{thm:4.1}.
\end{rmk}

\section{Comparison theorems}
\setcounter{equation}{0}
In this section, we let $k=1$, and establish two important comparison theorems for the weighted $L^p~(p>1)$ solutions and the weighted $L^1$ solutions of one-dimensional BSDEs, respectively.

\begin{thm}\label{thm:com1}
Assume that $\xi$ and $\xi'$ are two terminal values, $g$ and $g^{\prime}$ are two generators, and $(Y_\cdot, Z_\cdot)$ and $\left(Y_\cdot^{\prime}, Z_\cdot^{\prime}\right)$ are, respectively, a solution of BSDE $(\xi,\tau,g)$ and BSDE $\left(\xi^{\prime}, \tau, g^{\prime}\right)$. If $\xi \leq \xi^{\prime}$, $(Y_\cdot-Y_\cdot^{\prime})^+\in S_\tau^p(a_\cdot;\R^k)$ for some $p>1$ and either of the following two conditions is satisfied:\vspace{0.2cm}

(i) $g$ satisfies \ref{A:H4} and \ref{A:H5}, and $g\left(t, Y_{t}^{\prime}, Z_{t}^{\prime}\right) \leq g^{\prime}\left(t, Y_{t}^{\prime}, Z_{t}^{\prime}\right), \ \ t\in[0,\tau];$\vspace{0.1cm}

(ii) $g^{\prime}$ satisfies \ref{A:H4} and \ref{A:H5}, and $g\left(t, Y_{t}, Z_{t}\right) \leq g^{\prime}\left(t, Y_{t}, Z_{t}\right), \ \ t\in[0,\tau],$\vspace{0.2cm}

\noindent then for each $t \in[0,\tau]$, we have $Y_{t} \leq Y_{t}^{\prime}.$
\end{thm}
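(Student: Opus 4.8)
The plan is to reduce the statement to the a priori estimates of Propositions~\ref{pro:1.1} and~\ref{pro:1.2}, applied to the positive part of the difference of the two solutions. Since the hypotheses in (i) and (ii) are completely parallel (interchanging the triples $(g,Y_\cdot,Z_\cdot)$ and $(g',Y'_\cdot,Z'_\cdot)$ and invoking \ref{A:H4} and \ref{A:H5} for $g'$ rather than for $g$ leads to the same pointwise drift bound), it suffices to argue under (i). Set $\hat{Y}_\cdot:=Y_\cdot-Y'_\cdot$, $\hat{Z}_\cdot:=Z_\cdot-Z'_\cdot$ and $G(s):=g(s,Y_s,Z_s)-g'(s,Y'_s,Z'_s)$, so that
\begin{align*}
\hat{Y}_t=(\xi-\xi')+\int_t^\tau G(s)\,{\rm d}s-\int_t^\tau \hat{Z}_s\,{\rm d}B_s,\qquad t\in[0,\tau],
\end{align*}
with $(\xi-\xi')^+\equiv 0$ because $\xi\le\xi'$; the goal is to prove $\hat{Y}_\cdot^+\equiv 0$, which is exactly the assertion $Y_t\le Y'_t$ for all $t$.

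\emph{Drift bound and Tanaka's formula.} On the event $\{\hat{Y}_s>0\}$ I would first use $g(s,Y'_s,Z'_s)\le g'(s,Y'_s,Z'_s)$ from (i), then split $g(s,Y_s,Z_s)-g(s,Y'_s,Z'_s)=[g(s,Y_s,Z_s)-g(s,Y'_s,Z_s)]+[g(s,Y'_s,Z_s)-g(s,Y'_s,Z'_s)]$, bounding the first bracket by $\mu_s\hat{Y}_s$ via \ref{A:H4} (dividing the scalar inequality by $\hat{Y}_s>0$) and the second by $\nu_s|\hat{Z}_s|$ via \ref{A:H5}, to obtain $G(s)\le \mu_s\hat{Y}_s+\nu_s|\hat{Z}_s|$ there. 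By Tanaka's formula, $\hat{Y}_\cdot^+$ is a continuous semimartingale satisfying
\begin{align*}
\hat{Y}_t^+=\int_t^\tau {\bf 1}_{\hat{Y}_s>0}G(s)\,{\rm d}s-(K_\tau-K_t)-\int_t^\tau \check{Z}_s\,{\rm d}B_s,\qquad t\in[0,\tau],
\end{align*}
where $\check{Z}_s:={\bf 1}_{\hat{Y}_s>0}\hat{Z}_s$, the process $K_\cdot:=\tfrac12 L^0_\cdot(\hat{Y})$ is nondecreasing, $|\check{Z}_s|\le|\hat{Z}_s|$, and the previous line yields the key drift estimate ${\bf 1}_{\hat{Y}_s>0}G(s)\le \mu_s\hat{Y}_s^++\nu_s|\check{Z}_s|$. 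All of this is legitimate in the pathwise sense since $\int_0^\tau|G(s)|{\rm d}s<+\infty$ and $\int_0^\tau|\hat{Z}_s|^2{\rm d}s<+\infty$ $\ps$, both following from the fact that $(Y_\cdot,Z_\cdot)$ and $(Y'_\cdot,Z'_\cdot)$ are solutions.

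\emph{Running the a priori estimates on $\hat{Y}_\cdot^+$.} I would then repeat the computations in the proofs of Propositions~\ref{pro:1.1} and~\ref{pro:1.2} with $\hat{Y}_\cdot^+$ playing the role of $Y_\cdot$, $\check{Z}_\cdot$ that of $Z_\cdot$, terminal value $0$, and $u_\cdot=\mu_\cdot$, $v_\cdot=\nu_\cdot$, $f_\cdot\equiv 0$, hence $\overline{a}_\cdot=a_\cdot$. Two observations make this legitimate. First, the hypothesis $\hat{Y}_\cdot^+\in S_\tau^p(a_\cdot;\R^k)$ supplies exactly the integrability needed to start the argument, and the Proposition~\ref{pro:1.1}-type localization by $\tau_n:=\inf\{t\ge 0:\int_0^t e^{2\int_0^s a_r{\rm d}r}|\check{Z}_s|^2{\rm d}s\ge n\}\wedge\tau$ then gives $\check{Z}_\cdot\in M_\tau^p(a_\cdot;\R^{k\times d})$. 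Second, although $\hat{Y}_\cdot^+$ is not itself a solution of BSDE~\eqref{BSDE1.1}, the only new feature is the nondecreasing term $K_\cdot$, and because $\hat{Y}_s^+\,{\rm d}K_s\equiv 0$ (likewise $\hat{Y}_s^+\,{\rm d}L^0_s\equiv 0$) it contributes $-p\int_{t\wedge\tau}^\tau e^{p\int_0^s a_r{\rm d}r}(\hat{Y}_s^+)^{p-1}{\rm d}K_s=0$ when the generalized It\^o formula (Corollary~3.5 in \cite{WangRanChen2007}) is applied to $e^{p\int_0^t a_s{\rm d}s}(\hat{Y}_t^+)^p$, and drops out of the It\^o expansion of $e^{2\int_0^t a_s{\rm d}s}(\hat{Y}_t^+)^2$ as well. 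Feeding the drift estimate into Young's inequality exactly as in \eqref{1.7} and then following \eqref{1.8}--\eqref{1.10} produces
\begin{align*}
\E\left[\sup_{s\in[0,\tau]}\left(e^{p\int_0^s a_r{\rm d}r}(\hat{Y}_s^+)^p\right)\right]\le K_{p,\rho}\cdot 0=0,
\end{align*}
since both the terminal value and the $f$-term vanish; hence $\hat{Y}_t^+=0$ $\ps$ for every $t$, and by path-continuity $Y_t\le Y'_t$ simultaneously for all $t\in[0,\tau]$.

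\emph{Main obstacle.} The hard part is this last step: transferring the a priori estimates to $\hat{Y}_\cdot^+$, which solves only a variational equation carrying an extra nondecreasing term and whose $Z$-data are controlled only pathwise, while the coefficients $\mu_\cdot,\nu_\cdot$ carry no integrability---precisely the difficulty flagged in \cref{rmk5.3}. Concretely, one must handle the localization by $\tau_n$ together with conditional BDG and Young inequalities against the non-integrable weights $e^{\cdot\int_0^s a_r{\rm d}r}$, check that the stochastic integrals that appear are genuine (uniformly integrable) martingales rather than merely local ones, and pass to the limits $n\to\infty$ and then in the conditioning $\sigma$-algebra (via Fatou's lemma, dominated convergence and the martingale convergence theorem) exactly as in the proofs of Propositions~\ref{pro:1.1} and~\ref{pro:1.2}; each of these steps relies on $\hat{Y}_\cdot^+\in S_\tau^p(a_\cdot;\R^k)$.
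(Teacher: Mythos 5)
Your proposal is correct, but it takes a genuinely different route from the paper's proof. You apply Tanaka's formula to $(Y_\cdot-Y'_\cdot)^+$ and then re-run the $p$-th power a priori estimates of Propositions~\ref{pro:1.1} and~\ref{pro:1.2} on the pair $\bigl((Y_\cdot-Y'_\cdot)^+,{\bf 1}_{Y_\cdot>Y'_\cdot}(Z_\cdot-Z'_\cdot)\bigr)$ with zero terminal datum and $f_\cdot\equiv0$; the extra nondecreasing local-time term is indeed harmless since it is carried by $\{(Y_s-Y'_s)^+=0\}$ and so is annihilated against $((Y_s-Y'_s)^+)^{p-1}$, and your drift bound $\mu_s\hat Y_s^++\nu_s|\check Z_s|$ is exactly of the form required by assumption \ref{A:A} along the solution. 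The paper instead stays at the level of the first power: after the same It\^{o}--Tanaka step and the same drift estimate it linearizes the $z$-increment via $b_s=\nu_sV_s^*|V_s|^{-1}{\bf 1}_{V_s\neq0}$, performs a Girsanov change of measure on each localized interval $[0,\tau_n]$ to get $U^+_{t\wedge\tau_n}\leq\E[\chi_t^n\phi_t^n|\F_t]$, and then exploits $|b_s|\leq\nu_s$ together with H\"{o}lder's inequality with exponents $P=p$ and $Q=1+\rho/[(p-1)\wedge1]$ to show that $(\chi_t^n\phi_t^n)_n$ is bounded in $L^q$ for some $q>1$, hence uniformly integrable, and finally lets $n\to\infty$ using $U^+_{\tau_n}\to(\xi-\xi')^+=0$. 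Both arguments hinge in the same essential way on the hypothesis $(Y_\cdot-Y'_\cdot)^+\in S_\tau^p(a_\cdot;\R^k)$ (in the paper it feeds the uniform-integrability bound \eqref{5.47}; in your argument it launches the energy estimates and makes the stochastic integrals genuine martingales). The trade-off is that your route must justify that Propositions~\ref{pro:1.1} and~\ref{pro:1.2} survive the additional finite-variation term and a drift that is only a process bounded along the solution rather than a generator satisfying \ref{A:A} for all $(y,z)$ --- which you flag and handle correctly --- whereas the paper's route avoids the $p$-th power It\^{o} formula for the reflected-type semimartingale at the cost of the uniform-integrability device emphasized in \cref{rmk5.3} for coping with the unbounded coefficient $\nu_\cdot$ in the Girsanov density; your method also returns the slightly stronger quantitative conclusion $\|(Y_\cdot-Y'_\cdot)^+\|_{p;a_\cdot,c}=0$.
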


\begin{proof}
We only prove the case (i), the case (ii) can be proved similarly. For each $n\geq1$, denote the following $(\F_t)$-stopping time:
$$\tau_{n}:=\inf \left\{t\geq0: \int_{0}^{t}e^{2\beta\int_{0}^{s}\mu_r{\rm d}r}\left(|\nu_{s}|^{2}+|Z_s|^2+|Z_s^{\prime}|^2\right){\rm d}s \geq n\right\} \wedge \tau,$$
with convention that $\inf \emptyset=+\infty$.
Let
$$U_\cdot=Y_\cdot-Y_\cdot', \ V_\cdot=Z_\cdot-Z_\cdot', \ \zeta=\xi-\xi'.$$
Then $U_\cdot^+\in S_\tau^p(a_\cdot;\R^k)$ and
$$
U_{t}=\zeta+\int_{t}^{\tau} \left(g(r, Y_{r}, Z_{r})-g^{\prime}(r, Y_{r}^{\prime}, Z_{r}^{\prime})\right){\rm d}r-\int_{t}^{\tau} V_{r} {\rm d}B_{r}, \ \ t\in[0,\tau].
$$
The use of It\^{o}-Tanaka's formula to $U_{t}e^{\beta\int_{0}^{t}\mu_s{\rm d}s}$ in $[t\wedge\tau_n,\tau_n]$ yields that for each $n\geq1$,
\begin{align}\label{66.1}
\begin{split}
U^+_{t\wedge\tau_n}e^{\beta\int_{0}^{t\wedge\tau_n}\mu_s{\rm d}s}
\leq& U^+_{\tau_n}e^{\beta\int_{0}^{\tau_n}\mu_s{\rm d}s}-\int_{t\wedge\tau_n}^{\tau_n}e^{\beta\int_{0}^{s}\mu_r{\rm d}r}{\bf 1}_{U_s>0}V_s{\rm d}B_s\\
&+\int_{t\wedge\tau_n}^{\tau_n}e^{\beta\int_{0}^{s}\mu_r{\rm d}r}\left[{\bf 1}_{U_s>0}\left(g(s, Y_{s}, Z_{s})-g'(s, Y'_{s}, Z'_{s})\right)-\beta\mu_sU^+_s\right]{\rm d}s, \ \ t\geq0.
\end{split}
\end{align}
Since $g(s, Y'_{s}, Z'_{s})-g'(s, Y'_{s}, Z'_{s})$ is non-positive, combining assumptions \ref{A:H4}-\ref{A:H5} and the fact of $\beta\geq1$ we deduce that
\begin{align}\label{5.50}
\begin{split}
&{\bf 1}_{U_s>0}\left(g(s, Y_{s}, Z_{s})-g'(s, Y'_{s}, Z'_{s})\right)-\beta\mu_sU^+_s\\
&\ \  \ = {\bf 1}_{U_s>0}\left(g(s, Y_{s}, Z_{s})-g(s, Y'_{s}, Z'_{s})+g(s, Y'_{s}, Z'_{s})-g'(s, Y'_{s}, Z'_{s})\right)-\beta \mu_sU^+_s\\
&\ \  \ \leq {\bf 1}_{U_s>0}\left(\mu_s|U_s|+\nu_s|V_s|\right)-\beta \mu_sU^+_s\leq{\bf 1}_{U_s>0}\nu_s|V_s|, \ \ s\in[0,\tau].
\end{split}
\end{align}
Let
$$b_s=\frac{\nu_sV_s^*}{|V_s|}{\bf 1}_{V_s\neq0}, \ \ s\in[0,\tau],$$
where $V_\cdot^*$ denotes the transpose of $V_\cdot$. Then, in view of \eqref{5.50}, \eqref{66.1} can be rewritten to
\begin{align}\label{5.51}
\begin{split}
U^+_{t\wedge\tau_n}e^{\beta\int_{0}^{t\wedge\tau_n}\mu_s{\rm d}s}
&\leq U^+_{\tau_n}e^{\beta\int_{0}^{\tau_n}\mu_s{\rm d}s}-\int_{t\wedge\tau_n}^{\tau_n}e^{\beta\int_{0}^{s}\mu_r{\rm d}r}{\bf 1}_{U_s>0}V_s{\rm d}B_s+\int_{t\wedge\tau_n}^{\tau_n}e^{\beta\int_{0}^{s}\mu_r{\rm d}r}{\bf 1}_{U_s>0}V_sb_s{\rm d}s\\
&= U^+_{\tau_n}e^{\beta\int_{0}^{\tau_n}\mu_s{\rm d}s}-\int_{t\wedge\tau_n}^{\tau_n}e^{\beta\int_{0}^{s}\mu_r{\rm d}r}{\bf 1}_{U_s>0}V_s\left(-b_s{\rm d}s+{\rm d}B_s\right), \ \ t\geq0.
\end{split}
\end{align}
Let $\mathbb{Q}_n$ be the probability on $(\Omega,\F_\tau)$ which is equivalent to $\mathbb{P}$ and defined by
\begin{align*}
\frac{{\rm d}\mathbb{Q}_n}{{\rm d}\mathbb{P}}:={\rm exp}\left\{\int_0^\tau{\bf 1}_{s\leq\tau_n}b_s^*{\rm d}B_s-\frac{1}{2}\int_0^\tau{\bf 1}_{s\leq\tau_n}|b_s|^2{\rm d}s\right\}.
\end{align*}
By Girsanov's theorem, we deduce that for each $n\geq1$,
\begin{align}\label{5.45}
\begin{split}
U^+_{t\wedge\tau_n}e^{\beta\int_{0}^{t\wedge\tau_n}\mu_s{\rm d}s}\leq \E_{\mathbb{Q}_n}\left[U^+_{\tau_n}e^{\beta\int_{0}^{\tau_n}\mu_s{\rm d}s}\bigg|\F_t\right]=\frac{\E\left[U^+_{\tau_n}e^{\beta\int_{0}^{\tau_n}\mu_s{\rm d}s}\frac{{\rm d}\mathbb{Q}_n}{{\rm d}\mathbb{P}}\bigg|\F_t\right]}{\E\left[\frac{{\rm d}\mathbb{Q}_n}{{\rm d}\mathbb{P}}\bigg|\F_t\right]}, \ \ t\in[0,\tau].
\end{split}
\end{align}
Then we have
\begin{align}\label{66.6}
\begin{split}
U^+_{t\wedge\tau_n}\leq\E\left[\chi_t^n\phi_t^n\bigg|\F_t\right], \ \ t\in[0,\tau],
\end{split}
\end{align}
where
$$
\chi_t^n:=e^{\int_{t\wedge\tau_n}^{\tau_n}\left(\beta\mu_s+\frac{\rho}{2[(p-1)\wedge1]}|b_s|^2\right){\rm d}s}U^+_{\tau_n}
$$
and
$$ \phi_t^n:=e^{-\int_{t\wedge\tau_n}^{\tau_n}\left(\frac{1}{2}+\frac{\rho}{2[(p-1)\wedge1]}\right)|b_s|^2{\rm d}s+\int_{t\wedge\tau_n}^{\tau_n}b_s^*{\rm d}B_s}, \ t\in[0,\tau].\vspace{0.1cm}
$$
Next, we prove that $(\chi_t^n\phi_t^n)^\infty_{n=1}$ is uniformly integrable for each $t\in[0,\tau]$. Indeed, by virtue of $p>1$ and $\rho>1$, we will show that there exists a constant
$$q:=\frac{PQ}{P+Q}=\frac{p\left({[(p-1)\wedge1]}+\rho\right)}
{(p+1){[(p-1)\wedge1]}+\rho}>1,$$
where $P=p$ and $Q=1+\frac{\rho}{[(p-1)\wedge1]}$,
such that
\begin{align}\label{66.7}
\begin{split}
\sup_{n\geq1}\E\left[(\chi_t^n\phi_t^n)^q\right]<+\infty, \ \ t\in[0,\tau].
\end{split}
\end{align}
Since $\E[(\phi_t^n)^{Q}]=1$, by H\"{o}lder's inequality we deduce that for each $n\geq1$,
\begin{align*}
\begin{split}
\E\left[(\chi_t^n\phi_t^n)^{q}\right]\leq \left(\E\left[(\chi_t^n)^{P}\right]\right)^{\frac{Q}{P+Q}}
\left(\E\left[(\phi_t^n)^{Q}\right]\right)^{\frac{P}{P+Q}}
=\left(\E\left[\left(e^{\int_{t\wedge\tau_n}^{\tau_n}a_s{\rm d}s}U^+_{\tau_n}\right)^p\right]\right)^{\frac{Q}{P+Q}}.
\end{split}
\end{align*}
Note that $U_\cdot^+\in S_\tau^p(a_\cdot;\R^k)$. By taking supremum with respect to $n$ in both sides of the above inequality we deduce that
\begin{align}\label{5.47}
\begin{split}
\sup_{n\geq1}\E\left[(\chi_t^n\phi_t^n)^q\right]
\leq\left(\E\left[\sup_{r\in[0,\tau]}\left(e^{\int_{0}^{r}a_s{\rm d}s}U^+_{r}\right)^p\right]\right)^{\frac{Q}{P+Q}}<+\infty, \ \ t\in[0,\tau],
\end{split}
\end{align}
which means that \eqref{66.7} holds with $q>1$ and then $(\chi_t^n\phi_t^n)^\infty_{n=1}$ is uniformly integrable for each $t\in[0,\tau]$. Finally, in view of $\lim\limits_{n\rightarrow\infty}U^+_{\tau_n}=U^+_{\tau}=0$, by letting $n\rightarrow \infty$ in \eqref{66.6} we know that  $\lim\limits_{n\rightarrow\infty}U^+_{t\wedge\tau_n}=0$ for each $t\in[0,\tau]$, that is to say, $Y_t\leq Y'_t, \ t\in[0,\tau]$. The proof is then complete.
\end{proof}

From \cref{thm:com1}, the following corollary is immediate.

\begin{cor}\label{cor:com1}
Assume that $p>1$, $\xi$ and $\xi'$ are two terminal values, $g$ and $g^{\prime}$ are two generators such that ${\rm d}\mathbb{P}\times{\rm d} t-a.e.$, for each $(y,z)\in\R^k\times\R^{k\times d}$, $g(t,y,z)\leq g'(t,y,z)$, and $(Y_\cdot, Z_\cdot)$ and $\left(Y_\cdot^{\prime}, Z_\cdot^{\prime}\right)$ are, respectively, a weighted $L^p$ solution of BSDE $(\xi,\tau,g)$ and BSDE $\left(\xi^{\prime}, \tau, g^{\prime}\right)$ in $ H_\tau^p(a_\cdot;\R^k\times\R^{k\times d})$. If $\xi \leq \xi^{\prime}$, and either $g$ or $g'$ satisfies \ref{A:H4} and \ref{A:H5}, then for each $t \in[0,\tau]$, we have $Y_{t} \leq Y_{t}^{\prime}$.
\end{cor}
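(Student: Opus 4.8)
The plan is to obtain \cref{cor:com1} as a direct specialization of \cref{thm:com1}, so the entire task reduces to verifying that the hypotheses of \cref{thm:com1} are met in the present situation.

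First I would check the regularity requirement $(Y_\cdot-Y_\cdot')^+\in S_\tau^p(a_\cdot;\R^k)$. Since $(Y_\cdot,Z_\cdot)$ and $(Y_\cdot',Z_\cdot')$ are weighted $L^p$ solutions in $H_\tau^p(a_\cdot;\R^k\times\R^{k\times d})=S_\tau^p(a_\cdot;\R^k)\times M_\tau^p(a_\cdot;\R^{k\times d})$, both $Y_\cdot$ and $Y_\cdot'$ belong to $S_\tau^p(a_\cdot;\R^k)$; as $p>1$, this space is a normed vector space, so $Y_\cdot-Y_\cdot'\in S_\tau^p(a_\cdot;\R^k)$. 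In the one-dimensional setting $k=1$ one has $0\leq (Y_t-Y_t')^+\leq |Y_t-Y_t'|$ for every $t$, hence $\sup_{t\in[0,\tau]}\big(e^{p\int_0^t a_r{\rm d}r}|(Y_t-Y_t')^+|^p\big)\leq \sup_{t\in[0,\tau]}\big(e^{p\int_0^t a_r{\rm d}r}|Y_t-Y_t'|^p\big)$, and taking expectations shows $(Y_\cdot-Y_\cdot')^+\in S_\tau^p(a_\cdot;\R^k)$, this process being continuous and $(\F_t)$-adapted as well.

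Next I would translate the pointwise comparison of the generators into the form demanded by \cref{thm:com1}. By hypothesis there is a single ${\rm d}\mathbb{P}\times{\rm d}t$-null set outside which $g(t,y,z)\leq g'(t,y,z)$ holds simultaneously for all $(y,z)$; evaluating this inequality along the $(\F_t)$-progressively measurable trajectories of the two solutions therefore yields, ${\rm d}\mathbb{P}\times{\rm d}t$-a.e.,
\begin{align*}
g(t,Y_t',Z_t')\leq g'(t,Y_t',Z_t')\qquad\text{and}\qquad g(t,Y_t,Z_t)\leq g'(t,Y_t,Z_t).
\end{align*}
Combining $\xi\leq\xi'$, the membership $(Y_\cdot-Y_\cdot')^+\in S_\tau^p(a_\cdot;\R^k)$ just established, and these two inequalities, I would then invoke \cref{thm:com1}: if $g$ satisfies \ref{A:H4} and \ref{A:H5} we apply its part (i) with the first displayed inequality, whereas if $g'$ satisfies \ref{A:H4} and \ref{A:H5} we apply its part (ii) with the second; in either case \cref{thm:com1} gives $Y_t\leq Y_t'$ for all $t\in[0,\tau]$, which is the claim.

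There is no genuine obstacle here; the statement is a formal corollary of \cref{thm:com1}. The only point deserving explicit mention is the first step—that taking positive parts does not leave the weighted space—which is immediate from the normed-vector-space structure of $S_\tau^p(a_\cdot;\R^k)$ for $p>1$ together with the elementary bound $|(y)^+|\leq|y|$; and the remark that, because the exceptional null set in the generator comparison is uniform in $(y,z)$, no continuity or measurable-selection argument is required to pass to the trajectory-wise inequalities above.
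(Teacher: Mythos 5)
Your proposal is correct and matches the paper's intent exactly: the paper states that \cref{cor:com1} is immediate from \cref{thm:com1}, and your verification that $(Y_\cdot-Y_\cdot')^+\in S_\tau^p(a_\cdot;\R^k)$ and that the pointwise generator comparison yields the trajectory-wise inequalities required in (i) or (ii) is precisely the routine check being left to the reader. No further comment is needed.
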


Finally, we give the following comparison theorem on the weighted $L^1$ solutions of one-dimensional BSDEs.
\begin{thm}\label{thm:com2}
Assume that $\int_0^\tau \nu_t^2{\rm d}t\leq M$, $\xi,~\xi^{\prime} \in L_\tau^1(\beta\mu_\cdot;\R^k)$, $g$ and  $g^{\prime}$ are two generators, $(Y_\cdot, Z_\cdot)$ and $\left(Y_\cdot^{\prime}, Z_\cdot^{\prime}\right)$ are, respectively, a weighted $L^1$ solutions of BSDE $(\xi,\tau,g)$ and BSDE $\left(\xi^{\prime}, \tau, g^{\prime}\right)$ in $\bigcap_{\theta\in(0,1)}[S_\tau^\theta(\beta\mu_\cdot;\R^k)$
$\times M_\tau^\theta(\beta\mu_\cdot;\R^{k\times d})]$. If $\xi \leq \xi^{\prime}$ and either of the following two conditions is satisfied:\vspace{0.2cm}

(i) $g$ satisfies \ref{A:H4}-\ref{A:H6} and $g\left(t, Y_{t}^{\prime}, Z_{t}^{\prime}\right) \leq g^{\prime}\left(t, Y_{t}^{\prime}, Z_{t}^{\prime}\right), \ \ t \in[0,\tau];$\vspace{0.1cm}

(ii) $g^{\prime}$ satisfies \ref{A:H4}-\ref{A:H6} and $g\left(t, Y_{t}, Z_{t}\right) \leq g^{\prime}\left(t, Y_{t}, Z_{t}\right), \ \ t \in[0,\tau],$\vspace{0.2cm}

\noindent then for each $t \in[0,\tau]$, we have $Y_{t} \leq Y_{t}^{\prime}.$
\end{thm}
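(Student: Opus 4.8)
The plan is to reduce \cref{thm:com2} to the weighted $L^p~(p>1)$ comparison result \cref{thm:com1} by \emph{bootstrapping} the positive part $U_\cdot^+:=(Y_\cdot-Y'_\cdot)^+$ from the weighted $L^\theta~(\theta<1)$ spaces into a weighted $L^p$ space with some $p>1$, using the stochastic sub-linear growth assumption \ref{A:H6} exactly as in the uniqueness part of \cref{thm:4.1}. I treat case (i); case (ii) is symmetric, with the roles of $g,(Y,Z)$ and $g',(Y',Z')$ interchanged. Put $U_\cdot:=Y_\cdot-Y'_\cdot$, $V_\cdot:=Z_\cdot-Z'_\cdot$ and $\zeta:=\xi-\xi'\leq0$; then $(U_\cdot,V_\cdot)$ solves the BSDE with terminal value $\zeta$ and generator $g(s,Y_s,Z_s)-g'(s,Y'_s,Z'_s)$, the process $(e^{\beta\int_0^t\mu_s{\rm d}s}U_t^+)_{t\in[0,\tau]}$ belongs to the class (D) (being dominated by the class-(D) processes $e^{\beta\int_0^\cdot\mu_s{\rm d}s}|Y_\cdot|$ and $e^{\beta\int_0^\cdot\mu_s{\rm d}s}|Y'_\cdot|$), and $Y_\cdot,Y'_\cdot\in\bigcap_{\theta\in(0,1)}S_\tau^\theta(\beta\mu_\cdot;\R^k)$, $Z_\cdot,Z'_\cdot\in\bigcap_{\theta\in(0,1)}M_\tau^\theta(\beta\mu_\cdot;\R^{k\times d})$.

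First I would localize with the stopping times $\tau_n:=\inf\{t\geq0:\int_0^t e^{2\beta\int_0^s\mu_r{\rm d}r}(|Z_s|^2+|Z'_s|^2){\rm d}s\geq n\}\wedge\tau$ and apply It\^{o}-Tanaka's formula to $e^{\beta\int_0^t\mu_s{\rm d}s}U_t^+$ on $[t\wedge\tau_n,\tau_n]$, dropping the non-negative local-time term. On $\{U_s>0\}$, assumption \ref{A:H4} for $g$ gives $g(s,Y_s,Z_s)-g(s,Y'_s,Z_s)\leq\mu_sU_s^+$, assumption \ref{A:H6} for $g$ gives $|g(s,Y'_s,Z_s)-g(s,Y'_s,Z'_s)|\leq 2\gamma_s(g_s^1+g_s^2+|Y'_s|+|Z_s|+|Z'_s|)^l$, and the sign hypothesis yields $g(s,Y'_s,Z'_s)-g'(s,Y'_s,Z'_s)\leq0$; combined with $\beta\geq1$ this bounds the drift by $2e^{\beta\int_0^s\mu_r{\rm d}r}\gamma_s(g_s^1+g_s^2+|Y'_s|+|Z_s|+|Z'_s|)^l$, whence
\begin{align*}
e^{\beta\int_0^{t\wedge\tau_n}\mu_s{\rm d}s}U_{t\wedge\tau_n}^+\leq e^{\beta\int_0^{\tau_n}\mu_s{\rm d}s}U_{\tau_n}^++2G-\int_{t\wedge\tau_n}^{\tau_n}e^{\beta\int_0^s\mu_r{\rm d}r}{\bf 1}_{U_s>0}V_s{\rm d}B_s
\end{align*}
with $G:=\int_0^\tau e^{\beta\int_0^s\mu_r{\rm d}r}\gamma_s(g_s^1+g_s^2+|Y'_s|+|Z_s|+|Z'_s|)^l{\rm d}s$. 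Taking $\E[\cdot\,|\,\F_{t\wedge\tau_m}]$ for $n\geq m\geq1$ eliminates the (now genuine) stochastic-integral martingale, exactly as in the passage from \eqref{5.27} to \eqref{5.29}; letting $n\to\infty$ (using the class-(D) property and $U_{\tau_n}^+\to U_\tau^+=\zeta^+=0$) and then $m\to\infty$ (martingale convergence theorem) gives $e^{\beta\int_0^{t\wedge\tau}\mu_s{\rm d}s}U_{t\wedge\tau}^+\leq 2\E[G\,|\,\F_{t\wedge\tau}]$ for all $t\geq0$.

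Now fix $\theta\in(l,1)$ and set $p:=\theta/l>1$. Doob's $L^p$ inequality applied to the martingale $t\mapsto\E[G\,|\,\F_{t\wedge\tau}]$ gives $\E[\sup_{t\in[0,\tau]}(e^{\beta\int_0^t\mu_s{\rm d}s}U_t^+)^p]\leq C_p\,\E[G^p]$, and the H\"{o}lder estimates \eqref{gs1}--\eqref{zs}, together with \ref{A:H6}, (iii) of \cref{rmk-4} and $Y'_\cdot\in S_\tau^\theta(\beta\mu_\cdot;\R^k)$, $Z_\cdot,Z'_\cdot\in M_\tau^\theta(\beta\mu_\cdot;\R^{k\times d})$, show that $G\in L_\tau^p(0;\R^k)$, hence $U_\cdot^+\in S_\tau^p(\beta\mu_\cdot;\R^k)$. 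Because $\int_0^\tau\nu_t^2{\rm d}t\leq M$, the factor $e^{p\int_0^t\frac{\rho}{2[(p-1)\wedge1]}\nu_s^2{\rm d}s}$ is bounded, so $S_\tau^p(\beta\mu_\cdot;\R^k)=S_\tau^p(a_\cdot;\R^k)$ for $a_\cdot=\beta\mu_\cdot+\frac{\rho}{2[(p-1)\wedge1]}\nu_\cdot^2$ (which satisfies $\int_0^\tau a_t{\rm d}t<+\infty$); thus $(Y_\cdot-Y'_\cdot)^+\in S_\tau^p(a_\cdot;\R^k)$ with this $p>1$. Since in case (i) $g$ satisfies \ref{A:H4} and \ref{A:H5} and $g(t,Y'_t,Z'_t)\leq g'(t,Y'_t,Z'_t)$, \cref{thm:com1}(i) applies and yields $Y_t\leq Y'_t$ for every $t\in[0,\tau]$; case (ii) follows identically from \cref{thm:com1}(ii). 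I expect the bootstrapping step to be the main obstacle: justifying the two limit passages under only class-(D)/weighted-$L^\theta$ integrability, and checking that $G$ indeed lies in $L_\tau^p(0;\R^k)$ with $p>1$ --- here the assumption $\int_0^\tau\nu_t^2{\rm d}t\leq M$ is essential, both to run the H\"{o}lder chain and to identify $S_\tau^p(\beta\mu_\cdot;\cdot)$ with $S_\tau^p(a_\cdot;\cdot)$ so that \cref{thm:com1} becomes applicable.
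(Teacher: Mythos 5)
Your proposal is correct and follows essentially the same route as the paper: localize with the same stopping times, apply It\^{o}--Tanaka to $e^{\beta\int_0^t\mu_s{\rm d}s}U_t^+$, bound the drift via \ref{A:H4} and \ref{A:H6}, and then repeat the Doob/H\"{o}lder bootstrap from the uniqueness part of \cref{thm:4.1} to place $U_\cdot^+$ in $S_\tau^{\theta/l}(\beta\mu_\cdot;\R^k)$ before invoking \cref{thm:com1}. Your explicit remark that $\int_0^\tau\nu_t^2{\rm d}t\leq M$ identifies $S_\tau^p(\beta\mu_\cdot;\R^k)$ with $S_\tau^p(a_\cdot;\R^k)$ is a detail the paper leaves implicit, and it is handled correctly.
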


\begin{proof}
According to \cref{thm:com1}, it suffices to prove that $(Y_\cdot-Y_\cdot^{\prime})^+$ belongs to $S_\tau^p(\beta\mu_\cdot;\R^k)$ for some $p>1$ under the assumptions of \cref{thm:com2}. We just give the proof of the case (i). The same argument is applicable for the other case.

Let us fix integer $n>1$ and define the following $(\F_t)$-stopping time:
$$\tau_n:=\inf\left\{t\geq0:\int_0^te^{2\beta\int_0^s\mu_r{\rm d}r}\left(|Z_s|^2+|Z_s'|^2\right){\rm d}s\geq n\right\}\wedge \tau,$$
with convention that $\inf \emptyset=+\infty$.
Then we have
$$
U_{t}=\zeta+\int_{t}^{\tau} \left(g(r, Y_{r}, Z_{r})-g^{\prime}(r, Y_{r}^{\prime}, Z_{r}^{\prime})\right){\rm d}r-\int_{t}^{\tau} V_{r} {\rm d}B_{r}, \ \ t\in[0,\tau],
$$
where
$$U_\cdot=Y_\cdot-Y_\cdot', \ V_\cdot=Z_\cdot-Z_\cdot', \ \zeta=\xi-\xi'.$$
Applying It\^{o}-Tanaka's formula to $U_{t}e^{\beta\int_{0}^{t}\mu_s{\rm d}s}$ in $[t\wedge\tau_n,\tau_n]$ leads to, for each $n\geq1$,
\begin{align}\label{2266.1}
\begin{split}
U^+_{t\wedge\tau_n}e^{\beta\int_{0}^{t\wedge\tau_n}\mu_s{\rm d}s}
\leq& U^+_{\tau_n}e^{\beta\int_{0}^{\tau_n}\mu_s{\rm d}s}-\int_{t\wedge\tau_n}^{\tau_n}e^{\beta\int_{0}^{s}\mu_r{\rm d}r}{\bf 1}_{U_s>0}V_s{\rm d}B_s\\
&+\int_{t\wedge\tau_n}^{\tau_n}e^{\beta\int_{0}^{s}\mu_r{\rm d}r}\left[{\bf 1}_{U_s>0}\left(g(s, Y_{s}, Z_{s})-g'(s, Y'_{s}, Z'_{s})\right)-\beta\mu_sU^+_s\right]{\rm d}s, \ \ t\geq0.
\end{split}
\end{align}
In view of the fact that $g(s, Y'_{s}, Z'_{s})-g'(s, Y'_{s}, Z'_{s})$ is non-positive, it follows from assumptions \ref{A:H4} and \ref{A:H6} and the fact of $\beta\geq1$ that
\begin{align}\label{4.46}
\begin{split}
{\bf 1}_{U_s>0}\left(g(s, Y_{s}, Z_{s})-g'(s, Y'_{s}, Z'_{s})\right)-\beta\mu_sU^+_s
\leq&2\gamma_s\left(g_s^1+g_s^2+|Y'_{s}|+|Z_{s}|+|Z'_{s}|\right)^l, \ \ s\in[0,\tau].
\end{split}
\end{align}
Combining \eqref{2266.1} and \eqref{4.46} we have for each $n\geq1$,
\begin{align*}
\begin{split}
U^+_{t\wedge\tau_n}e^{\beta\int_{0}^{t\wedge\tau_n}\mu_s{\rm d}s}
\leq& U^+_{\tau_n}e^{\beta\int_{0}^{\tau_n}\mu_s{\rm d}s}-\int_{t\wedge\tau_n}^{\tau_n}e^{\beta\int_{0}^{s}\mu_r{\rm d}r}{\bf 1}_{U_s>0}V_s{\rm d}B_s\\
&+\int_{t\wedge\tau_n}^{\tau_n}e^{\beta\int_{0}^{s}\mu_r{\rm d}r}2\gamma_s\left(g_s^1+g_s^2+|Y'_{s}|+|Z_{s}|+|Z'_{s}|\right)^l{\rm d}s, \ \ t\geq0.
\end{split}
\end{align*}
Thus, in view of assumptions of \cref{thm:com2}, similar to \eqref{5.29}-\eqref{zs}, we can deduce that for any $\theta\in(l,1)$, $U^+_\cdot$ belongs to $S_\tau^p(\beta\mu_\cdot;\R^k)$ with $p:=\frac{\theta}{l}>1$. The proof is then complete.
\end{proof}

\begin{rmk}\label{rmk5.3}
In order to prove \cref{thm:com1}, we utilize an important assertion that the bounded family of random variables in $L^p~(p>1)$ is uniformly integrable, and use H\"{o}lder's inequality skillfully, see \eqref{5.45}-\eqref{5.47} for details. We emphasize that \cref{thm:com1} improves Theorem 3.2 of  \cite{Li2023}, Proposition 5 of \cite{Briand2006}, Theorem 2.1 of \cite{SFan2016SPA} for the case of $\rho(x)=\phi(x)=x$, and that \cref{thm:com2} improves Proposition 1 of \cite{FanLiuSPL} for the case of $\alpha=1$ and Theorem 2.4 of \cite{SFan2016SPA} for the case of $\rho(x)=\phi(x)=x$. \cref{thm:com1} and \cref{thm:com2} pave the way for the further study of the $L^p~(p\geq1)$ solutions for one-dimensional BSDEs with stochastic coefficients.
\end{rmk}


\setlength{\bibsep}{2pt}
\bibliographystyle{model5-names}
\biboptions{authoryear}

\end{document}